
\documentclass[12pt,reqno]{amsart}
\usepackage[margin=3cm]{geometry}

\usepackage{mathrsfs}
 \usepackage{graphicx}
 \usepackage[utf8]{inputenc}
 \usepackage[T1]{fontenc}
 \usepackage{lmodern}
 \usepackage[normalem]{ulem}
 \usepackage{verbatim}
 \usepackage{bbm}
 \usepackage{stmaryrd}
 \usepackage{amsmath}
 \usepackage{amssymb}
 \usepackage{dsfont}
\usepackage{amsthm}
\usepackage{hyperref}
\usepackage{relsize}
\usepackage{exscale}
\usepackage{cleveref}
\usepackage[usenames,dvipsnames]{color}
\usepackage{enumerate}

\theoremstyle{plain}
\newtheorem{theo}{Theorem}[section]%

\newtheorem{cor}[theo]{Corollary}%
\newtheorem{lemma}[theo]{Lemma}%
\newtheorem{prop}[theo]{Proposition}%
\newtheorem{claim}[theo]{Claim}%
\theoremstyle{definition}
 \newtheorem{defi}[theo]{Definition}
\theoremstyle{remark}
 \newtheorem{remark}[theo]{Remark}

\numberwithin{equation}{section}

\newcommand\nc\newcommand
\nc\dmo\DeclareMathOperator
\nc\bs\boldsymbol

\nc{\red}[1]{{\color{red} #1}}
\nc{\blue}[1]{{\color{blue} #1}}
\nc{\green}[1]{{\color{green} #1}}
\nc{\cyan}[1]{{\color{cyan} #1}}
\definecolor{purple}{rgb}{0.9,0,0.8}
\nc{\purple}[1]{{\color{purple} #1}}
\definecolor{gray}{rgb}{0.5,0.5,0.5}
\nc{\gray}[1]{{\color{gray} #1}}
\nc{\note}[1]{{\blue{\textup{[#1]}}}}

\nc{\wt}{\widetilde}
\nc{\wh}{\widehat}
\nc{\eqd}{\stackrel{\text{\tiny $d$}}{=}}

\nc{\ii}{\mathrm{i}}

\def \HS {\mathrm{HS}}

\def \op {\mathrm{op}}
\dmo{\rank}{rank}
\def\tr{{\rm Tr}}

\renewcommand{\P}{\mathbb P}
\newcommand{\E}{\mathbb E}
\newcommand{\Var}{\mathbb{V}{\rm ar}}

\newcommand{\C}{\mathbb C}
\newcommand{\R}{\mathbb R}
\newcommand{\N}{\mathbb N}
\newcommand{\Z}{\mathbb Z}

\renewcommand\phi\varphi 

\nc{\cA}{\mathcal{A}}
\nc{\cB}{\mathcal{B}}
\nc{\cC}{\mathcal{C}}
\nc{\cD}{\mathcal{D}}
\nc{\cE}{\mathcal{E}}
\nc{\cF}{\mathcal{F}}
\nc{\cG}{\mathcal{G}}
\nc{\cH}{\mathcal{H}}
\nc{\cI}{\mathcal{I}}
\nc{\cJ}{\mathcal{J}}
\nc{\cK}{\mathcal{K}}
\nc{\cL}{\mathcal{L}}
\nc{\cM}{\mathcal{M}}
\nc{\cN}{\mathcal{N}}
\nc{\cO}{\mathcal{O}}
\nc{\cP}{\mathcal{P}}
\nc{\cQ}{\mathcal{Q}}
\nc{\cR}{\mathcal{R}}
\nc{\cS}{\mathcal{S}}
\nc{\cT}{\mathcal{T}}
\nc{\cU}{\mathcal{U}}
\nc{\cV}{\mathcal{V}}
\nc{\cW}{\mathcal{W}}
\nc{\cX}{\mathcal{X}}
\nc{\cY}{\mathcal{Y}}
\nc{\cZ}{\mathcal{Z}}

\nc{\fkp}{\mathfrak{p}}    \nc{\fkP}{\mathfrak{P}}
\nc{\fkq}{\mathfrak{q}}    \nc{\fkQ}{\mathfrak{Q}}
\nc{\sP}{\mathscr{P}}		

\dmo{\id}{I}
\nc{\eps}{{\varepsilon}}
\nc\ls\lesssim
\nc\gs\gtrsim
\nc\lls{\,\ls\,}
\nc{\ggs}{\,\gs\,}
\dmo{\ball}{\mathbb{B}}
\dmo{\sph}{\mathbb{S}}
\dmo{\dist}{dist}
\nc{\pr}{\P}
\dmo{\Normal}{\cN}

\dmo{\GUE}{GUE}
\nc{\Lin}{{\cL}}
\nc{\Linz}{\Lin^z}
\nc{\slin}{s}
\nc{\rlin}{r}
\nc{\tlin}{{\blue{\slin_0}}}
\nc{\Slin}{S}
\nc{\tSlin}{\wt{S}}
\nc{\Tlin}{{\blue{S_0}}}
\nc{\pol}{{\fkp}}
\nc{\Pol}{{P}}
\nc{\LL}{{\bs L}}
\nc{\MM}{{\bs M}}
\nc{\UU}{{\bs U}}
\nc{\WW}{{\bs W}}
\nc{\QQ}{{\bs Q}}
\nc{\RR}{{\bs R}}
\nc{\hRR}{\wh{\RR}}
\nc{\HH}{{\bs H}}
\nc{\bAA}{{\bs A}}
\nc{\tAA}{\bs{\wt{A}}}
\nc{\bq}{{\bs q}}
\nc{\hUU}{\wh{\UU}}
\nc{\hFlat}{\wh{\Flat}}
\nc{\hU}{\wh{U}}
\nc{\hu}{\wh{u}}
\nc{\hv}{\wh{v}}
\nc{\bv}{{\bs{v}}}
\nc{\hbu}{\wh{\bu}}
\nc{\tS}{\wt{S}}
\nc{\hbq}{\wh{\bq}}
\nc{\LLz}{{\bs L}^z}
\nc{\tU}{\wt{U}}
\nc{\tLL}{\bs{\wt{L}}}	
\nc{\tLLz}{\tLL^z}
\nc{\tMM}{\wt\MM}
\nc{\tM}{\wt{M}}
\nc{\tL}{\wt{L}}
\nc{\tLz}{\tL^z}
\nc{\tUU}{\wt\UU}
\nc{\Le}{L}
\nc{\Lez}{L^z}
\nc{\Kez}{K^z}
\nc{\hL}{\wh{L}}
\nc{\hLz}{\wh{L}^z}
\nc{\num}{{n}}
\nc{\rnk}{{n_0}}
\nc{\Mat}{\mathbb{M}}
\nc{\bx}{{\bs x}}
\nc{\xx}{{\bx}}
\nc{\bX}{{\bs X}}
\nc{\XX}{{\bX}}
\nc{\bg}{{\bs g}}
\nc{\bh}{{\bs h}}
\nc{\bu}{{\bs u}}
\nc{\bxi}{{\bs \xi}}
\nc{\ba}{{\bs a}}
\dmo{\diag}{diag}
\nc{\dd}{{\rnk+1}}
\dmo{\col}{col}
\dmo{\row}{row}
\dmo{\Span}{Span}
\nc{\DD}{{[0,\rnk]}}
\dmo{\Struct}{Struct}
\dmo{\Stief}{\mathbb{U}}
\dmo{\Sm}{S}
\dmo{\tSm}{\wt{S}}
\nc{\smin}{\sigma_{\min}}
\nc{\good}{\cA}
\nc{\badR}{\cB}
\dmo{\smk}{1}
\nc{\shift}{a_0}
\nc{\bound}{B}
\nc{\Flat}{\bs W}
\nc{\pp}{p}
\dmo{\Leb}{Leb}
\nc{\Ylin}{Y}
\dmo{\proj}{Proj}
\nc{\scpol}{{p}}
\nc{\tw}{{\wt{w}}}
\nc{\tv}{\wt{v}}
\nc{\tx}{\wt{x}}
\nc{\pstr}{{\delta}}
\nc{\pnet}{\rho_\star}
\nc{\pssv}{{\alpha}}
\nc{\pflat}{{\beta}}
\nc{\rexp}{\gamma}
\nc{\matt}[3]{\Mat_{#1}^{#2}(#3)}
\nc{\stief}[2]{\Stief_{#1}^{#2}}
\nc{\matball}[2]{\ball_{#1}^{#2}}
\nc{\hw}{\wh{w}}
\nc{\pfine}{\rho_1}

\nc{\subsV}{{\mathsf{V}}}
\nc{\subsW}{{\mathsf{W}}}
\nc{\subsp}{\subsW}
\dmo{\Walk}{Walk}
\nc{\gvec}{\bs{\zeta}}
\nc{\gstep}{\zeta}
\nc{\BALL}{\ball_\star}
\nc{\tbad}{\wt{\badR}}
\nc{\walkshift}{M}

\nc{\nick}[1]{{#1}}
\nc{\nickC}[1]{{\bf \emph{\nick{[#1]}}}}
\nc{\alice}[1]{{\color{red} #1}}
\nc{\jonathan}[1]{{\color{blue} #1}}

\title[Pseudospectrum for quadratic polynomials of Ginibre matrices]{Spectrum and pseudospectrum for quadratic polynomials in Ginibre matrices}

\author[N.\ Cook]{Nicholas Cook$^*$}\thanks{${}^*$Partially supported by NSF grant DMS-1606310}
\address{
Duke University, Durham, NC 27708, USA}
\email{nickcook@math.duke.edu}
\author[A.\ Guionnet]{Alice Guionnet$^\dagger$}\thanks{${}^\dagger$Partially supported by  Labex MILYON/ANR-10-LABX-0070 }
\address{Universit\'e de Lyon, ENSL, CNRS, France }
\email{Alice.Guionnet@ens-lyon.fr}
\author[J.\ Husson]{Jonathan Husson}
\address{Universit\'e de Lyon, ENSL,  France}
\email{Jonathan.Husson@ens-lyon.fr}

\date{\today}

\begin{document}

\begin{abstract}
For a fixed quadratic polynomial $\pol$  in $n$ non-commuting variables, and $n$ independent $N\times N$ complex Ginibre matrices $X_1^N,\dots, X_n^N$, we establish the convergence of the empirical spectral distribution of $P^N =\pol(X_1^N,\dots, X_n^N)$ to the Brown measure of $\pol$ evaluated at $n$ freely independent circular elements $c_1,\dots, c_n$ in a non-commutative probability space. 
The main step of the proof is to obtain quantitative control on the pseudospectrum of $P^N$.
Via the well-known linearization trick this hinges on anti-concentration properties for certain matrix-valued random walks,
which we find can fail for structural reasons 
of a different nature from the arithmetic obstructions that were illuminated in works on the Littlewood–Offord problem for discrete scalar random walks.
\end{abstract}


\maketitle
\setcounter{tocdepth}{2}

\section{Introduction}

Recall that for an $N\times N$ matrix $A$ with complex entries and complex eigenvalues $\lambda_1(A),\dots, \lambda_N(A)$ (not necessarily distinct), the \emph{empirical spectral distribution (ESD)} is the probability measure
\begin{equation}	\label{def:ESD}
\mu_A := \frac1N \sum_{j=1}^N \delta_{\lambda_j(A)}.
\end{equation}
For an ensemble of random $N\times N$ matrices $(A^N)_{N\ge 1}$, a central problem in random matrix theory is to establish a law of large numbers for the ESDs -- that is, to prove that (in the vague topology) $\mu_{A^N}$ converges in probability to some deterministic probability measure $\mu$. 
The seminal works of Wigner \cite{Wig58} and Marchenko--Pastur \cite{pastur-marchenko} addressed this problem for matrices with i.i.d.\ entries above the diagonal (Wigner ensembles) and Gram matrices for i.i.d.\ rectangular matrices using the moment and Stieltjes transform methods.

For non-Hermitian \emph{i.i.d.\ ensembles} $X^N$, having i.i.d.\ entries and no symmetry assumption, the problem was only addressed at a comparable level of generality much more recently in \cite{TaVu:circ,GT,TVUniversality}, where it was shown that $\mu_{\frac1{\sqrt{N}}X^N}$ converges to Girko's circular law $\mu_{\text{circ}}$, the uniform measure on the unit disk.  The moment and Stieltjes transform methods were insufficient to establish the circular law due to the instability of the spectrum of non-normal matrices; these obstacles were overcome using tools from additive combinatorics and geometric functional analysis. We refer to the survey \cite{BoCh:survey} for further background.

The circular law was recently  generalized to the non-homogeneous case where the variance of the entries depends on the site \cite{chjr,AEK-inhom} and further to the case of correlated entries provided their correlations decay fast enough \cite{AK}. The case of matrices invariant under left multiplication by Haar unitary matrices led to the single ring theorem proven in \cite{SingleRing}; see also \cite{FZ} for a non-rigorous derivation. 
The sum of i.i.d.\ Haar unitary, orthogonal and permutation matrices were treated in \cite{BaDe, BCZ}.

Random matrices have played an important role in the theory of operator algebras since the seminal work of Voiculescu \cite{V91}. In these applications one is interested not in a single random matrix $X^N$ but rather a collection $X_1^N,\dots, X_n^N$ of a bounded number of independent random matrices, and the algebra they generate in the large $N$ limit.
For self-adjoint polynomials in several independent Wigner matrices, Voiculescu showed that the theory of  free probability gives powerful tools to understand the limit of ESDs for a non-commutative polynomial in those matrices \cite{V91}. Indeed, if $W^N_1,...,W^N_n$ are independent $N\times N$ Wigner matrices and $\pol$ is a non-commutative self-adjoint polynomial, then the ESD of $\pol(W^N_1,...,W^N_n)$ converges towards the spectral distribution of $\pol(s_1,...,s_n)$, where $s_1,...,s_n$ are $n$ freely independent semi-circular elements of a von Neumann algebra.

\begin{figure}
\includegraphics[width=200pt]{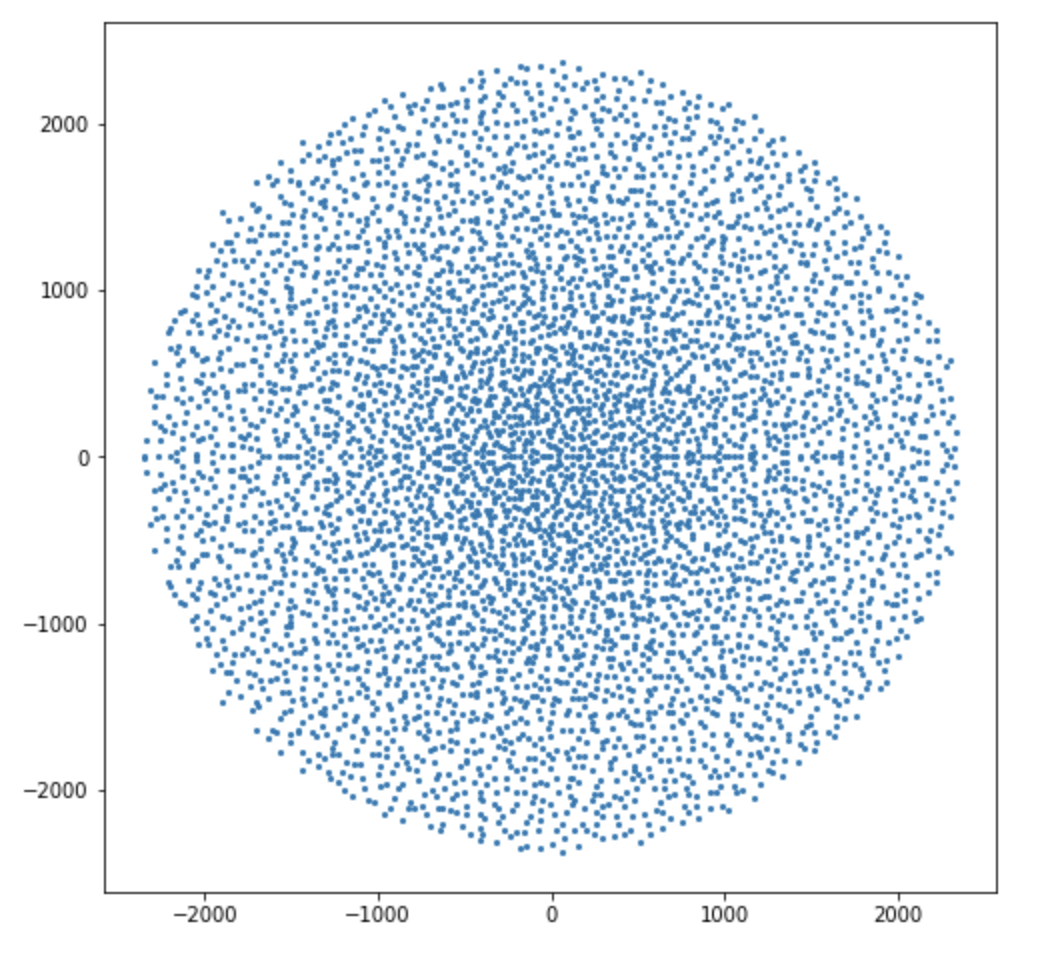}
\includegraphics[width=195pt]{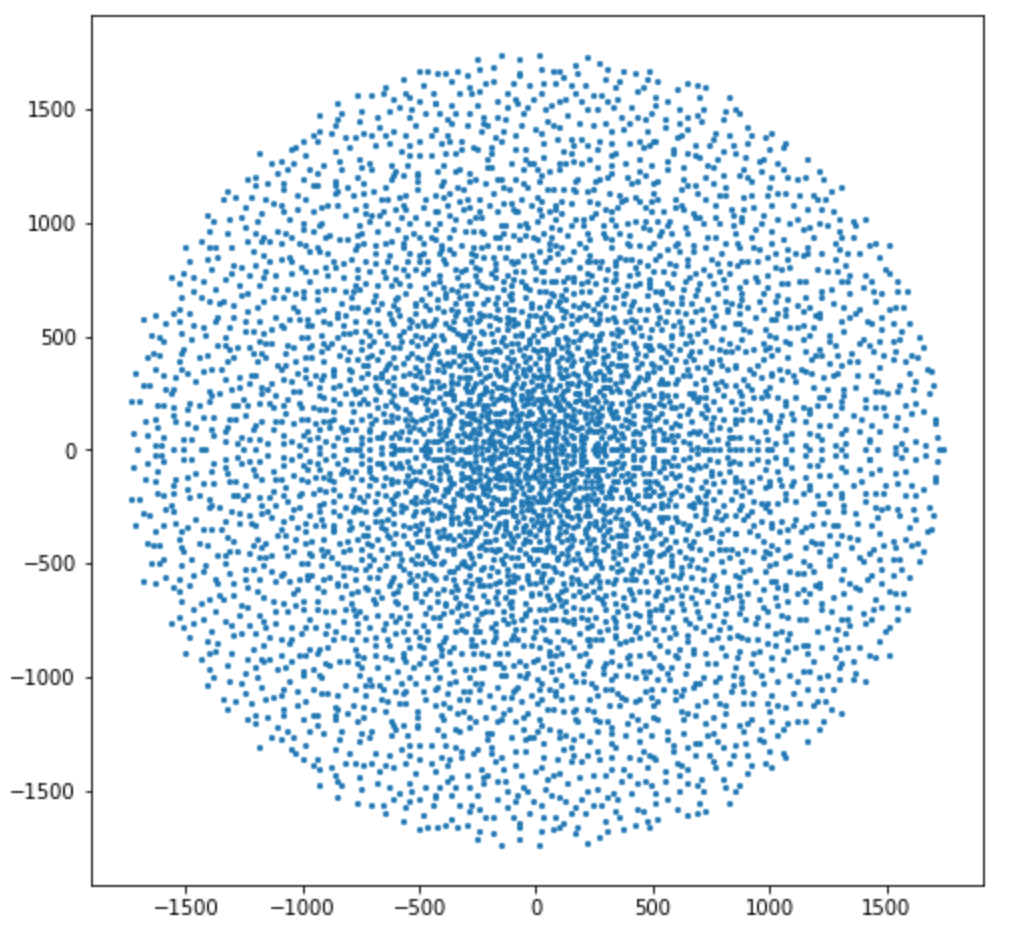}
\caption{Simulated spectra of $XY+YX$ (left) and $XY-0.3YZ + 0.1ZX$ (right) for independent $5000\times 5000$ matrices $X,Y,Z$ with entries independently and uniformly drawn from $[-1,1]$.}
\end{figure}

The problem is more involved in the case of a non-Hermitian polynomial since the convergence in $*$-moments does not yield the convergence of the ESDs. 
In this case, the analogue of the spectral distribution for a non-normal element of a von Neumann algebra is the Brown measure \cite{Brown}, and the result one can expect for usual matrix models is the convergence of the empirical measure toward the Brown measure of 
the $*$-moments limit. Even the computation of the candidate Brown measure limit is a non-trivial task:
there has been recent progress by Speicher, Mai, Belinschi and Sniady, who found an algorithm that gives such Brown measures using linearization techniques and subordination results \cite{BMS,BSS}; see also \cite{BL} for the computation of some specific Brown measures. 

The convergence of ESDs of polynomials in independent matrices was so far tackled  only in specific cases.  The product of independent Ginibre matrices was studied by  F. G\"otze, A. Naumov and A. Tikhomirov \cite{Tik,GT} and S. O'Rourke and A. Soshnikov \cite{OS}, as well as the sum of such products \cite{Kti}. For products of Girko's elliptic random matrices see \cite{ORSV}. Yet, there are no general results for the convergence of  the ESDs of non-self-adjoint polynomials in several independent matrices.

\subsection{Spectral convergence and the pseudospectrum}

It turns out that the qualitative problem of establishing convergence of spectral measures to the Brown measure is intimately related to quantitative (finite-$N$) questions of interest in numerical analysis.
The key difficulty for non-normal matrices lies in the instability of their eigenvalues: it is well known that for certain matrices even a tiny perturbation of a single entry can drastically change the spectrum.

One may expect however that \emph{random} non-normal matrices have a more stable spectrum. 
\nick{A long line of works beginning with \cite{Sniady} has shown that in many cases, for a sequence of non-normal matrices converging in star moments, it is sufficient to perturb by a small random matrix (of vanishing norm) in order to ``regularize'' the ESDs and guarantee convergence to the Brown measure -- see \cite{Sniady, GWZ,Wood:noise, BPZ,VoZe} and references therein. This regularizing effect has been exploited for applications in numerical analysis in the recent works \cite{BKMS,BGKS}.}
Yet, these results are based on independence of the entries of random matrices and there are, as quoted above, no  general results when  the entries  start to be strongly coupled, for instance for  the commutator of two  independent Ginibre matrices.

We briefly sketch how quantitative measures of spectral instability arise in the study of limiting spectral distributions; 
a more formal discussion is deferred to \Cref{sec:brown}.
The starting point is to note that the empirical spectral distribution (ESD) $\mu_A$ of an $N\times N$ matrix $A$ can be recovered from the Laplacian of the log-modulus of the characteristic polynomial: recalling \eqref{def:ESD}, we have
\[
\mu_A =
\frac1{2\pi N} \sum_{i=1}^N \Delta_z  \log|\lambda_i(A) - z| =  \frac1{2\pi N} \Delta_z\log |\det(A-z)|.
\]
On the other hand, the log-modulus of the characteristic polynomial can also be expressed 
\[
\frac1N\log|\det(A-z)| = \frac1{2N} \log \det [(A-z)(A-z)^*]  = \frac12 \int_0^\infty \log x \,d\mu_{(A-z)(A-z)^*}(x)
\]
and so we have the identity
\begin{equation}	\label{mu.identity}
\mu_A = \frac1{4\pi}  \Delta_z\int_0^\infty \log x \,d\mu_{(A-z)(A-z)^*}(x)
\end{equation}
expressing the ESD of a (possibly non-normal) matrix $A$ in terms of the ESDs of the collection of Hermitian matrices $(A-z)(A-z)^*$ with $z\in \C$. 

Now for a sequence $A^N$ of $N\times N$ matrices converging in $*$-moments to an element $a$ of a von Neumann algebra one has convergence of the ESDs $\mu_{(A^N-z)(A^N-z)^*}$ to the spectral measure $\mu_{(a-z)(a-z)^*}$, and so one might hope to have convergence of the ESDs $\mu_{A^N}$ to the measure $\mu_a$ obtained by substituting $a$ for $A$ on the right hand side of \eqref{mu.identity}.
This is not true in general: we have already mentioned that convergence in $*$-moments does not guarantee convergence of the spectral distributions, and indeed one notes that the hoped-for identity fails if there is escape of mass of $\mu_{(A^N-z)(A^N-z)^*}$ to the singularities of the logarithm at $0$ and $+\infty$. 
However, we may take \eqref{mu.identity} as a reasonable guess, and indeed this amounts to the definition of the Brown measure $\nu_a$ (reviewed in \Cref{sec:brown} below). 

We thus see that in order to access the limiting measure via \eqref{mu.identity} it is crucial to control the largest and smallest eigenvalues of $(A^N-z)(A^N-z)^*$. For random matrices it turns out that the more delicate task is to bound the smallest eigenvalue from below.
Recall that the $\eps$-pseudospectrum of a square matrix $A$ is the set
\begin{equation}	\label{def:pseudospectrum}
\Lambda_\eps(A) = \{ z\in \C: \smin(A-z) \le \eps\} 
\end{equation}
where $\smin(A) = \sqrt{\lambda_{\min}(AA^*)}$ denotes the smallest singular value of a matrix $A$.
Alternatively, $\Lambda_\eps(A)$ is the union of the spectra of $A+E$ over all perturbations $E$ of spectral norm at most $\eps$ (we refer to \cite[Chapter 2]{TrEm} for the demonstration of this equivalence).
The pseudospectrum is an important object in numerical analysis that quantifies the stability of the spectrum under small perturbations.  
We refer to the textbook \cite{TrEm} for further background.
To prove convergence of the ESDs $\mu_{A^N}$ one must show that $\Lambda_\eps(A^N)$ is  asymptotically null for $\eps =\eps(N) \to 0$ not too fast (any polynomial order will be sufficient). 

We point out that the problem of bounding $\sup_{z\in \C}\pr\{\smin(A-z)\le \eps\}$ for a random $N\times N$ matrix $A$ generalizes the well-studied \emph{anti-concentration problem} for scalar random variables, which is the case $N=1$. Moreover, scalar concentration inequalities have played a fundamental role in the study of invertibility of random matrices. See \Cref{sec:anti} for further discussion.

To establish uniform integrability of the logarithm in \eqref{mu.identity} also requires some control on the $k$-th smallest singular value of $A^N-z$ for moderately small values of $k$ (in the range $[N^{-c}, \delta N  ]$ for small fixed $c,\delta>0$). 
For a polynomial $P^N$ in Ginibre matrices
this can be done thanks to local laws which can be found in this context in \cite{HT}. Hence, in the present work the main issue is to obtain control on the pseudospectrum of $P^N$.

\subsection{Main results}
\label{sec:main}

In the sequel we write $\C\langle x_1,\dots, x_n\rangle$ for the set of polynomials with complex coefficients in $n$ non-commuting indeterminates $x_1,\dots x_n$.  
We recall the following definition of the complex Ginibre ensemble: 

\begin{defi}
A random matrix $X=X^N$ is an $N\times N $ complex Ginibre matrix if the families $(\sqrt{N} \Re (X_{i,j}))_{i,j \in[N]}$ and $(\sqrt{N} \Im (X_{i,j}))_{i,j \in[N]}$ are independent i.i.d.\ families of random variables of law $\Normal(0,1/2)$.
\end{defi}

Our first main result establishes convergence of the empirical spectral distribution of $\pol(X_1^N,\dots, X_n^N)$ 
for any quadratic non-commutative polynomial $\pol$. 

\vspace{.2cm}

\begin{theo}[Convergence to the Brown measure] \label{thm:brown}
Let $n\in \N$ and $\pol\in \C\langle x_1,\dots, x_n\rangle$ be a non-commutative polynomial of degree two.
For each $N\in \N$ let $X_1^N,\dots,X_n^N$ be $n$ independent $N\times N$ complex Ginibre matrices and set $\Pol^N = \pol(X_1^N,\dots, X_n^N)$. 
Then
\[
\mu_{\Pol^N} \to \nu_{\pol(c_1,\dots,c_n)} 
\]
weakly in probability, where $c_1,\dots, c_n$ are $*$-free circular elements of a $W^*$-probability space 
and $\nu_{\pol(c_1,\dots,c_n)}$ is the Brown measure of  $\pol(c_1,\dots,c_n)$ (defined in \Cref{sec:brown} below). 
\end{theo}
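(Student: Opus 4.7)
The strategy is Girko's Hermitization combined with asymptotic freeness and a quantitative pseudospectrum bound. By the identity \eqref{mu.identity} and the definition of the Brown measure reviewed in \Cref{sec:brown}, it suffices to prove that for Lebesgue-almost every $z\in\C$,
\[
\int_0^\infty \log x\, d\mu_{(\Pol^N-z)(\Pol^N-z)^*}(x) \longrightarrow \int_0^\infty \log x\, d\mu_{(\pol(c)-z)(\pol(c)-z)^*}(x)
\]
in probability, with $c=(c_1,\ldots,c_n)$. Voiculescu's asymptotic freeness theorem for independent Ginibre matrices gives weak convergence (in probability, indeed almost surely by concentration) of the Hermitian measures on the left to the one on the right, so the problem reduces to uniform integrability of $\log x$ against the family $\{\mu_{(\Pol^N-z)(\Pol^N-z)^*}\}_{N}$ near the singularities $0$ and $+\infty$.

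The upper singularity is handled by the standard bound $\|X_i^N\|_{\op}=O(1)$ with overwhelming probability, which yields $\|\Pol^N\|_{\op}=O(1)$ and confines the support of the spectral measures to a bounded interval uniformly in $N$. For the lower singularity one needs two complementary estimates. Intermediate singular values $s_k(\Pol^N-z)$ with index $k\in[N^{1-c},\delta N]$ are bounded below by an inverse polynomial in $N$ using the local law for polynomials in Ginibre matrices available via the Haagerup--Thorbjørnsen framework (\cite{HT} in the excerpt). The genuinely new input is an extremal bound of the form
\[
\sup_{z\in K}\pr\bigl(\smin(\Pol^N-z)\le N^{-\rexp}\bigr) \lls N^{-\rexp}
\]
on each compact $K\subset\C$, for some constant $\rexp>0$; equivalently, the $N^{-\rexp}$-pseudospectrum of $\Pol^N$ is asymptotically negligible. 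This is the central contribution of the paper.

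To prove the pseudospectrum bound, the plan is to invoke the linearization trick: associate to $\pol-z$ a linear pencil $\Linz$, namely a matrix affine in $X_1^N,\ldots,X_n^N$ and their adjoints acting on $\C^{d}\otimes\C^N$ for a fixed dimension $d$ depending only on $\pol$. The singular value $\smin(\Pol^N-z)$ is comparable, up to an $N^{O(1)}$ factor, to $\smin(\Linz)$, and the latter can be controlled by an $\eps$-net argument provided one can bound
\[
\pr\bigl(\|\Linz \bu\|\le \eps\bigr)
\]
uniformly over unit vectors $\bu\in\C^{dN}$. This in turn reduces to an anti-concentration inequality for the matrix-valued Gaussian random walk determined by the entries of the $X_i^N$, applied to each fixed test vector $\bu$.

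The main obstacle, as flagged in the abstract, is that this matrix-valued anti-concentration cannot be obtained by a direct transfer of the scalar Littlewood--Offord toolbox: there exist unit vectors $\bu$ for which the matrix coefficients of the walk degenerate for structural, essentially algebraic, reasons rather than arithmetic ones, and in those directions Gaussian anti-concentration alone is too weak. The plan is therefore to stratify the sphere in $\C^{dN}$ according to the flatness or rank profile of the associated coefficient matrices. On the generic stratum one expects Gaussian anti-concentration combined with tensorization across the $N$ columns to yield the bound. On the small measure of structured strata, which are the principal difficulty, one argues separately using explicit algebraic descriptions of the bad directions together with a refined net and a compactness argument, perhaps aided by a preliminary regularization of $\Pol^N$ by a small independent Ginibre perturbation in the spirit of \cite{Sniady, GWZ}. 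Once the pseudospectrum bound is in place, combining it with the local law yields uniform integrability of the logarithm, closing the Hermitization argument and producing convergence of $\mu_{\Pol^N}$ to $\nu_{\pol(c)}$.
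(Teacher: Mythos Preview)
Your Hermitization outline is correct and matches the paper's \Cref{sec:law}: asymptotic freeness for the convergence of the Hermitized measures $\mu_{(P^N-z)(P^N-z)^*}$, the operator norm bound for the singularity at $+\infty$, a local law via \cite{HT} for intermediate singular values, and the pseudospectrum bound (\Cref{thm:pseudo}) for the bottom of the spectrum. The paper also passes through an $L^1$ argument in $z$ to deduce distributional convergence from pointwise-in-$z$ convergence of the log-potential, which you do not mention but is routine.

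Your sketch of the pseudospectrum bound, however, diverges from the paper in a way that matters. You propose to take an $\eps$-net over unit vectors $\bu\in\C^{dN}$ and bound $\pr(\|\Linz\bu\|\le\eps)$ uniformly. The paper does \emph{not} do this. Instead, after linearization it uses the independence of the $N$ block-columns of $\tLLz$ to reduce (via the column--distance trick and a union bound over $N$, not a net over $\C^{dN}$) to the invertibility of a \emph{bounded}-dimensional matrix $\hLz_1=\sum_i U_{i,1}^*\Lez_{i,1}\in\Mat_{[0,\rnk]}(\C)$, where $\UU_1$ is an orthonormal basis for the cokernel of the remaining columns. Anti-concentration is then obtained for $\det(\hLz_1)$---a degree-$(\rnk+1)$ polynomial in the Gaussian entries of a single column---via the Carbery--Wright inequality, provided $\UU_1$ is ``unstructured'' in the sense of \eqref{def:struct}. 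The net argument is only over the set $\Struct(\delta)$ of structured orthonormal bases, further stratified by the effective rank of the walk matrix $\Flat(\UU)$ in \eqref{def:flat}; the savings come from the fact that the structural constraints lower the metric dimension of each stratum below the rank available for tensorized anti-concentration. Your stratification of the full sphere by ``flatness or rank profile'' is in the right spirit, but without the reduction to the bounded-dimensional test projection it is unclear how one would close the union bound, since a net for $\sph^{dN-1}$ has size $(C/\eps)^{2dN}$ while the available anti-concentration rank for $\|\Linz\bu\|$ is at most $\rank(\Flat)\le(\rnk+1)^2$ per column. Finally, the paper makes no use of a Ginibre regularization in the sense of \cite{Sniady,GWZ}; \Cref{thm:pseudo} is proved directly for $\Pol^N$.
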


\vspace{.2cm}

As we described above, the main step for proving \Cref{thm:brown} is to control the pseudospectrum of $P^N$, which is accomplished in our second main result. 
Whereas \Cref{thm:brown} was stated for a sequence of matrices $P^N$ of growing size, the following is a non-asymptotic result for matrices of any fixed size. 

\vspace{.1cm}

\begin{theo}[Control on the pseudospectrum] \label{thm:pseudo}
Let $N,n\in \N$ and $\pol\in \C\langle x_1,\dots, x_n\rangle$ be a non-commutative polynomial of degree two.
There is an absolute constant $C_0>0$ and $C(\pol),c(\pol)>0$ depending only on $\pol$ such that the following holds. 
Let $X_1,\dots,X_n$ be $n$ independent $N\times N$ complex Ginibre matrices and set $\Pol = \pol(X_1,\dots, X_n)$. 
For any $z \in  \C$ and any $\eps>0$,
\[ 
\P \big\{ \sigma_{\min}(\Pol - z) \le \eps \big\} \le C(N^{C_0}\eps^{c} + e^{-N}).
\]
\end{theo}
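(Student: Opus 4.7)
The plan is to follow the linearization plus $\varepsilon$-net strategy pioneered by Rudelson and Vershynin for least singular values of random matrices, but adapted to the matrix-valued random walks that arise once the polynomial is linearized. Roughly: reduce $\Pol - z$ to an affine pencil $\Linz$ in the $X_i$'s; invert the problem as a small-ball bound on $\inf_{\|u\|=1}\|\Linz u\|$; split the unit sphere of the linearized space into ``compressible'' and ``incompressible'' vectors; treat the two regimes by a net bound and by a matrix Littlewood--Offord estimate, respectively.

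First I would linearize. Since $\pol$ is of degree two, a Schur-complement construction produces an integer $\rnk=\rnk(\pol)$ and matrices $A_0(z),A_1,\dots,A_n,B_1,\dots,B_n\in\Mat_{\rnk}(\C)$ such that
\[
\Linz := A_0(z)\otimes \id_N + \sum_{i=1}^n A_i\otimes X_i + \sum_{i=1}^n B_i\otimes X_i^*
\]
satisfies a deterministic inequality of the form $\smin(\Pol-z)\gs \smin(\Linz)^{O(1)}$ on a high-probability event where $\|X_i\|_{\op}\ls \sqrt N$. That good event has probability $\ge 1-e^{-N}$ by standard Gaussian concentration of the Ginibre norm, which accounts for the $e^{-N}$ term in the statement. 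It then suffices to prove a polynomial-in-$N$ small-ball bound for $\smin(\Linz)$ uniformly in $z$.

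Next I would write $\smin(\Linz) = \inf_{\|u\|=1}\|\Linz u\|$ with $u\in\C^{\rnk N}$, and decompose the unit sphere into compressible vectors (close to a sparse vector) and incompressible ones. For compressible $u$, the set has metric entropy $O(N\log N)$; for each fixed $u$ the vector $\Linz u$ is a complex Gaussian with covariance bounded below in terms of $\|u\|$ and the operator norms of the $A_i,B_i$, so one gets a Gaussian anti-concentration bound of the form $\pr\{\|\Linz u\|\le\varepsilon\}\le (C\varepsilon)^{cN}$; a union bound over a sufficiently fine $\varepsilon$-net beats the entropy cost.

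The real obstacle is the incompressible regime, which forces the ``matrix-valued Littlewood--Offord'' estimate alluded to in the abstract: for a flat $u\in\C^{\rnk N}$ one needs
\[
\sup_{w\in\C^{\rnk N}} \pr\big\{\|\Linz u - w\|\le\varepsilon\big\}\;\ls\; N^{C_0}\varepsilon^{c}.
\]
Write $u$ as a collection of $N$ blocks $u^{(k)}\in\C^{\rnk}$; then the contribution of each Gaussian entry $X_i(k,\ell)$ to $\Linz u$ is $A_i u^{(\ell)}\otimes e_k$ (plus the conjugate term from $B_i$), so the random walk's step directions live in $\Mat_{\rnk}$ and depend on $u$ through the vectors $\{A_i u^{(\ell)},B_i u^{(\ell)}\}$. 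Anti-concentration of this walk can genuinely fail if, for many $\ell$, these step vectors lie in a common proper subspace of $\C^{\rnk}$ determined by the coefficient matrices---a structural obstruction distinct from the arithmetic ones in the scalar Littlewood--Offord theory. The plan is (i) to classify the bad subspaces of $\C^{\rnk N}$ where this failure occurs, (ii) to show that an incompressible $u$ lying in such a bad set forces $\Linz$ to be approximately block-triangular, in which case either $\Pol-z$ already has good invertibility for structural reasons or one can iterate the linearization on the degenerate block, and (iii) for $u$ away from these bad sets, to prove the desired matrix-valued small-ball bound by extracting many independent Gaussian coordinates whose step directions span $\Mat_{\rnk}$ up to a quantitative flatness, then applying a standard Gaussian anti-concentration estimate in $\Mat_{\rnk}$. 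Combining the two regimes with the linearization reduction and the high-probability operator-norm bound yields \Cref{thm:pseudo}.
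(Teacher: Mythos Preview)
Your linearization step is roughly right, though the reduction one actually gets is the clean inequality $\smin(\Pol-z)\ge\smin(\LLz)$ (since $(\Pol-z)^{-1}$ is a block of $(\LLz)^{-1}$), not a power relation, and the pencil involves only the $X_i$, not $X_i^*$. The serious gap is in your handling of the unit sphere of $\C^{(\rnk+1)N}$.

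For the incompressible regime you propose to prove, for a fixed flat $u$, that $\sup_w\pr\{\|\LLz u - w\|\le\eps\}\ls N^{C_0}\eps^c$ with $c=c(\pol)$ bounded. But a polynomial small-ball bound of this form cannot be fed into a union bound over an $\eps$-net of the incompressible sphere, which has cardinality $(C/\eps)^{\Theta(N)}$; the product diverges. The Rudelson--Vershynin incompressible argument does \emph{not} bound $\|Au\|$ for fixed $u$ and then net over $u$ --- it passes, via the distance lemma, to distances of columns to complementary spans, with no union over $u$ at all. The quantity you display is simply the wrong object, and your step (ii) (diagnosing bad $u$ via approximate block-triangularity of $\LLz$ and ``iterating on a degenerate block'') does not repair this.

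The paper in fact skips the compressible/incompressible split entirely. On $\{\smin(\LLz)\le\eps\}$ there is a unit $\bv=(v_1,\dots,v_N)\in(\C^{\rnk+1})^N$ with $\|\LLz\bv\|\le\eps$; some block has $\|v_{j_0}\|\ge N^{-1/2}$, and projecting onto the orthocomplement of the span of the other $(\rnk+1)(N-1)$ columns gives $\smin(\hLz_{j_0})\le\eps\sqrt N$, where $\hLz_{j_0}=\sum_i U_{i,j_0}^*\Lez_{i,j_0}\in\Mat_{\rnk+1}(\C)$ for $\UU_{j_0}$ an orthonormal set in that orthocomplement, chosen independently of $\col_{j_0}(\tLLz)$ (\Cref{lem:bdd}). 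A union bound over $j_0\in[N]$ costs only a factor $N$. The anti-concentration problem is then for $|\det(\hLz_1)|$, a degree-$(\rnk+1)$ Gaussian polynomial in the entries of a single block-column, and Carbery--Wright delivers $\ls N^{O(1)}(\eps/\pstr)^{1/(\rnk+1)}$ \emph{provided} the basis $\UU_1$ is ``unstructured'': some specific $(\rnk+1)\times(\rnk+1)$ determinant built from its rows exceeds $\pstr$ (\Cref{lem:anti-unstruct}). Thus the structural obstruction you correctly anticipate is a property of the random basis $\UU_1$, not of an approximate null vector $u$; the heavy lifting (\Cref{prop:struct}, via \Cref{lem:block0}, \Cref{lem:mat-ac}, \Cref{lem:fullrank}, \Cref{lem:struct.net}) is a stratified net argument on the space of such bases, showing $\UU_1$ is unstructured except with probability $O(e^{-N})$.
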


\vspace{.1cm}

\begin{remark}	\label{rmk:constants}
Our proof shows one can take $C_0=13/3$ but we have not tried to optimize this constant. 
We obtain the dependence $c(\pol) = 1/(\rnk+1)$, where $\rnk$ is the rank of the quadratic form associated to the homogeneous degree-two part of $\pol$ (see \Cref{lem:linearize}). 
The constant $C$ depends on $\pol$ only through the rank parameter $\rnk$ and the size of the coefficients, quantified by the norm of the matrix $(\slin_0,\dots, \slin_\rnk)$ with columns $\slin_k$ as in \Cref{cor:linearize}. The dependence on these two parameters is polynomial, but we have not tracked the precise order.
Finally, we believe the exponential error term $e^{-N}$ should not be necessary under the assumption of Gaussian entries, as in the degree-one case (cf.\ \Cref{lem:smin}). This term is due to our reliance on net arguments, and we expect that removing it would require a different approach. 
\end{remark}

From the Fubini--Tonelli theorem and Markov's inequality we have the following corollary on the density of the pseudospectrum (recall \eqref{def:pseudospectrum}).

\vspace{.1cm}

\begin{cor}	\label{cor:pseudo}
With hypotheses in \Cref{thm:pseudo}, for any Borel set $\Omega\subset \C$ and any $\eps>0$ we have
\[
\E \Leb (\Lambda_\eps(P) \cap \Omega) \le C(N^{C_0}\eps^{c} + e^{-N})\Leb(\Omega)
\]
where $\Leb$ denotes the Lebesgue measure on $\C$. In particular, for any $K>0$ there exists $C(K,\pol)>0$ such that for all $N$ sufficiently large depending on $\pol$,
for any fixed Borel set $\Omega$ of finite measure we have that with probability at least $1-N^{-100}$,
\[
\Leb (\Lambda_{N^{-C}}(P) \cap \Omega)\le N^{-K}\Leb(\Omega) .
\]
\end{cor}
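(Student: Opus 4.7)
The plan is to derive both statements as immediate consequences of \Cref{thm:pseudo} by unfolding the definition of the pseudospectrum, swapping the order of integration, and then applying Markov's inequality to convert the expectation bound into a high-probability bound.

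First, I would write
\[
\Leb(\Lambda_\eps(P)\cap\Omega) = \int_\Omega \Car\{\sigma_{\min}(P-z)\le \eps\}\,dz,
\]
take expectation, and apply Fubini--Tonelli to bring the expectation inside:
\[
\E\Leb(\Lambda_\eps(P)\cap\Omega) = \int_\Omega \P\bigl\{\sigma_{\min}(P-z)\le \eps\bigr\}\,dz.
\]
Then \Cref{thm:pseudo} bounds the integrand uniformly in $z$ by $C(N^{C_0}\eps^{c}+e^{-N})$, and integrating over $\Omega$ yields the first assertion.

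For the high-probability statement, I would apply Markov's inequality to the nonnegative random variable $\Leb(\Lambda_{N^{-C}}(P)\cap\Omega)$, obtaining
\[
\P\bigl\{\Leb(\Lambda_{N^{-C}}(P)\cap\Omega) > N^{-K}\Leb(\Omega)\bigr\} \le \frac{\E\Leb(\Lambda_{N^{-C}}(P)\cap\Omega)}{N^{-K}\Leb(\Omega)} \le \frac{C\bigl(N^{C_0-Cc}+e^{-N}\bigr)\Leb(\Omega)}{N^{-K}\Leb(\Omega)}.
\]
It then suffices to choose $C=C(K,\pol)$ large enough (depending on $K$ and on $c=c(\pol)$) so that $Cc \ge C_0+K+100$; for all $N$ large enough depending on $\pol$, the term $Ce^{-N}N^{K}$ is negligible compared to $N^{-100}$, giving the desired bound.

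There is essentially no obstacle here beyond bookkeeping: the nontrivial content is all in \Cref{thm:pseudo}, and the corollary amounts to a standard ``integrate a pointwise anti-concentration bound and apply Markov'' argument. The only minor care needed is in the choice of $C$ to absorb both the $N^{C_0}$ loss from \Cref{thm:pseudo} and the $N^K$ factor from Markov, along with checking that the parameter $\Leb(\Omega)$ cancels cleanly in the Markov step (which is why $\Omega$ may be taken of arbitrary finite measure without affecting the constants).
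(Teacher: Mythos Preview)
Your proposal is correct and follows precisely the approach indicated in the paper, which simply notes that the corollary follows ``from the Fubini--Tonelli theorem and Markov's inequality'' without giving further details. Your bookkeeping for the choice of $C$ and the handling of the $e^{-N}$ term is exactly what is needed.
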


\vspace{.3cm}

For the proof of \Cref{thm:pseudo} we make use of the so-called linearization trick incepted by Haagerup and Thorbj\o rnsen in \cite{HT} to reduce the problem to the estimate of the smallest singular value of a larger block matrix with coefficients given by  linear combinations of  the Ginibre matrices, hence obtaining a matrix with strongly dependent entries.  
Such a matrix can be viewed as an $N\times N$ matrix whose entries are independent random matrices of bounded size, and which themselves have correlated entries. Where previous works on the pseudospectrum of i.i.d.\ matrices reduced the problem to establishing anti-concentration for a scalar random walk, here we are led to study anti-concentration of \emph{matrix} random walks and their determinants. We defer further discussion of proof ideas to \Cref{sec:outline}.

\subsection{Anti-concentration for scalar polynomials} \label{sec:anti}

The problem addressed by \Cref{thm:pseudo} of bounding $\pr\{\smin(\Pol-z)\le \eps\}$ uniformly over $z\in \C$ reduces in the case $N=1$ to the classical anti-concentration problem for quadratic polynomials. Under the Gaussian assumption our result in this case (without the error term $e^{-N}$) is
an easy consequence of the Carbery--Wright inequality, which is a basic ingredient of the proof -- cf.\ \Cref{lem:anti-poly}.

Anti-concentration for \emph{linear} forms in scalar random variables of general (especially discrete)  distribution has been extensively explored since 
the work of Littlewood and Offord \cite{LiOf}, and its connection to the invertibility of random matrices was made by Koml\'os \cite{Komlos67}. Within the \emph{Littlewood--Offord theory}, a key theme developed in works of Hal\'asz \cite{Halasz}, Tao--Vu \cite{TaVu:sing, TaVu:cond}, Rudelson--Vershynin \cite{RuVe:LO} and Nguyen--Vu \cite{NgVu:ILO} 
is the link between anti-concentration of linear forms in discrete random variables and a lack of arithmetic structure in the coefficients.

The theory of anti-concentration for higher degree polynomials is still under development. 
Costello, Tao and Vu made a link between the invertibility of symmetric random matrices and anti-concentration of scalar quadratic forms \cite{CTV}.
Further advances on the invertibility of symmetric random matrices have appeared in \cite{Vershynin:symmetric, FeJa:symmetric, CMMM}, though we remark that all of these works make use of a bilinear reduction in order to apply the linear Littlewood--Offord theory.
Works of Costello \cite{Costello}, Nguyen \cite{Nguyen:symmetric,Nguyen:quadratic} and recently of Kwan and Sauermann \cite{KwSa} have shown that for quadratic polynomials in Bernoulli variables, in addition to arithmetic effects, there can be \emph{algebraic} structural reasons for a lack of anti-concentration -- basically, that the polynomial is close to a quadratic form of low rank.

\Cref{thm:pseudo} can be viewed as an anti-concentration result for non-commutative random variables. Analogously to the works \cite{Nguyen:symmetric,Nguyen:quadratic,KwSa} on scalar quadratic forms, we find there are new algebraic issues that are not present in the case of a single shifted i.i.d.\ matrix. As noted in \Cref{rmk:constants}, the rank of the associated quadratic form also plays a role here, though there are numerous other structural issues to be dealt with.
In order to focus on these novel structural challenges we consider here the case of Gaussian entries; an extension to general, possibly discrete distributions would involve a combination of algebraic and arithmetic structural considerations and is beyond the scope of this work.

\subsection{The Brown measure limit}
\label{sec:brown}

In this subsection we define the Brown measure appearing as the limit in \Cref{thm:brown}.
For further background on free probability we refer to \cite[Chapter 5]{AGZ} and \cite[Chapter 11]{MiSp}.

Recall that a (tracial) $W^*$-probability space $(\cA,\tau)$ is a von Neumann algebra $\cA$ equipped with a tracial faithful normal state $\tau$.
According to  Voiculescu's central result \cite{V91}, the non-commutative distribution of $(X_1^N,\dots, X_n^N)$ converges in $*$-moments towards the non-commutative distribution of $n$ free circular elements $(c_{1},\ldots,c_{n})$ in $\cA$ in the sense that for any polynomial $\pol\in \C\langle x_1,\dots, x_n,x_{1}^{*},\ldots,x_{n}^{*}\rangle$
\[
\lim_{N\rightarrow\infty }\frac{1}{N}\tr\left(\pol(X_1^N,\dots, X_n^N,(X_1^N)^{*},\dots, (X_n^N)^{*})\right)=\tau\left(\pol(c_{1},\dots,c_{n},c_{1}^{*},\dots,c_{n}^{*})\right)\qquad a.s.
\]
The  right hand side is a linear map on the set of polynomials in  $\C\langle x_1,\dots, x_n,x_{1}^{*},\ldots,x_{n}^{*}\rangle$ which is uniquely defined  by  its value on monomials given, for any choices of $i_{1},\ldots, i_{k}\in [1,n]$  and any $\eps_1,\dots, \eps_k\in\{1,*\}$, by setting that
$\tau\left(c_{i_{1}}^{\varepsilon_{1}}\cdots c_{i_{k}}^{\varepsilon_{k}}\right)$
is the number of non-crossing pair partitions of  $\{i_{1},\ldots,i_{k}\}$ so that each block $b=(b_{1},b_{2})$ is such  that $i_{b_{1}}=i_{b_{2}}$ and   $(\varepsilon_{b_{1}}, \varepsilon_{b_{2}})=(1,*)$ or $(*,1)$.
By density of polynomial functions in the set of continuous functions,  and the fact that the Ginibre matrices are bounded with high probability, we  see that the latter implies that
for any  bounded continuous function  $f$
\[
\lim_{N\rightarrow\infty }\frac{1}{N}\tr\left(f\left(\pol(X_1^N,\dots, X_n^N,(X_1^N)^{*},\dots, (X_n^N)^{*})\right)\right)=\tau\left(f(\pol(c_{1},\dots,c_{n},c_{1}^{*},\dots,c_{n}^{*}))\right)\quad a.s.
\]
If $\pol$ is  self-adjoint, this guarantees the convergence of the empirical spectral distribution of $\pol\left(X_1^N,\dots, X_n^N,(X_1^N)^{*},\dots, (X_n^N)^{*}\right)$ towards the distribution of 
$\pol(c_{1},\dots,c_{n},c_{1}^{*},\dots,c_{n}^{*})$ which is specified by its moments as given above. If $\pol$ is non-self-adjoint, there is no such simple  relation between the eigenvalues and moments. Girko's idea \cite{Girko} to overcome this difficulty is based on Green's formula  which states that for any twice continuously and  bounded function $\psi$, any complex numbers $\lambda_i, 1\le i\le N$, 
\begin{equation}\label{Green}\frac{1}{N}\sum_{i=1}^N \psi(\lambda_i)=\frac{1}{2\pi}\int \Delta\psi(z)\frac{1}{N} \sum_{i=1}^N \log |z-\lambda_i| dz\end{equation}
where $dz$ denotes the Lebesgue measure on $\mathbb C$.  Taking the $\lambda_{i},1\le i\le N$ to be  the eigenvalues of $\pol\left(X_1^N,\dots, X_n^N,(X_1^N)^{*},\dots, (X_n^N)^{*}\right)$, we deduce
\[
\frac{1}{N}\sum_{i=1}^N \psi(\lambda_i)=\frac{1}{4\pi}\int \Delta\psi(z)\frac{1}{N} \tr\left(\log \left|z- \pol(X_1^N,\dots, X_n^N,(X_1^N)^{*},\dots, (X_n^N)^{*})\right|^{2}\right)dz\,.
\]
Noting that $\left|z- \pol(X_1^N,\dots, X_n^N,(X_1^N)^{*},\dots, (X_n^N)^{*})\right|^{2}$ is  a self-adjoint polynomial in  $X_1^N,\dots, X_n^N,$ $(X_1^N)^{*},\dots, (X_n^N)^{*},$ and 
neglecting the singularity and unboundedness of the logarithm, we are hence prompted to conjecture that the empirical measure  of the eigenvalues of $ \pol(X_1^N,\dots, X_n^N,(X_1^N)^{*},\dots, (X_n^N)^{*})$ converges towards its Brown measure given for any twice continuously differentiable function $\psi$ on $\mathbb C$ by
\[
\int_{\mathbb C} \psi(\lambda)d\nu_{\pol(c_1,\ldots,c_n,c_1^*,\ldots,c_n^*)}(\lambda):=\frac{1}{4\pi} \int  \Delta\psi(z) \tau\left( \log  \left|z-\pol(c_{1},\dots,c_{n},c_{1}^{*},\cdots,c_{n}^{*})\right|^{2}\right)dz.
\]
Note  that  the right hand side makes sense  as  soon as $\Delta\psi$  is nonnegative, taking at worst the value  $-\infty$, since the circular elements  $c_{1},\ldots,c_{n}$ are bounded.

\subsection{Organization of the paper}
\label{sec:organization}

The proof of \Cref{thm:pseudo} occupies the bulk of the paper; the deduction of \Cref{thm:brown} from \Cref{thm:pseudo} following the by-now standard Hermitization procedure is deferred to \Cref{sec:law}. 
In \Cref{sec:outline} we give an informal overview of the proof ideas for \Cref{thm:pseudo} using the example of the anti-commutator $P=X_1X_2+X_2X_1$ in  two Ginibre matrices. 
The formal proofs of Theorems \ref{thm:pseudo} and \ref{thm:brown} occupy the remainder of the paper. 
\Cref{sec:notation} summarizes our notational conventions and records a few standard facts.
The proof of \Cref{thm:pseudo} occupies Sections \ref{sec:linearization}--\ref{sec:struct}.

\section{Proof outline}
\label{sec:outline}

The proof of \Cref{thm:pseudo} proceeds in four steps, carried out in Sections \ref{sec:linearization}--\ref{sec:struct}. 
Here we illustrate the main ideas for the special case of the \emph{anti-commutator} $P=X_1X_2+X_2X_1$ of independent Ginibre  matrices $X_1,X_2$ (the same arguments apply with trivial modifications to the commutator $X_1X_2-X_2X_1$).
Note that Anderson \cite{Anderson} showed the convergence of the empirical measure of the eigenvalues of the anti-commutator $X_{1}X_{2}+X_{2}X_{1}$ in two Wigner matrices, which is self-adjoint.
For our conventions on asymptotic notation we refer to \Cref{sec:notation} below;
in the simple setting of the anti-commutator we are able to avoid the more burdensome notation for block matrices described there that is useful for treating general polynomials.

\subsection{A word on the problem for a single i.i.d.\ matrix}	\label{sec:iid.word}

Previous works on the invertibility and pseudospectrum of a single i.i.d.\ matrix $X$ proceed by reduction to the task of bounding the probability that some column of $X$, say the $j$th column, which we will denote by $\col_j(X)$,  is close to the span $\subsV_{(j)}$ of the remaining $N-1$ columns. The independence of the columns allows one to condition on the remaining columns and focus on controlling the distance of a random vector to a fixed subspace. When the entries are i.i.d.\ Gaussian, by rotational invariance one may take the fixed space to be the span of the standard basis vectors $e_2,\dots, e_N$, and the distance is simply the magnitude of the first coordinate of $\col_j(X)$. The necessary control then follows from the boundedness of the Gaussian density.

For non-Gaussian entries, and in particular for discrete distributions, the problem is more complicated as the small ball probability for the distance depends on the position of the random hyperplane spanned by the remaining columns -- in particular on arithmetic structure in a normal vector to the hyperplane. Such issues will not arise in the present work and we refer the interested reader to \cite{RuVe:LO}.
By taking the entries of our matrices to be Gaussian we can focus on the novel structural pathologies that can arise for algebraic reasons related to the form of the polynomial $\pol$.

\subsection{Step 1: Linearization}

When considering distances of columns to the span of remaining columns for the anti-commutator $\Pol =X_1X_2+X_2X_1$ in Ginibre matrices, we immediately encounter the problem of complete lack of independence of the entries. However, we can retain some independence by using the 
linearization trick of Haagerup and Thorbj\o rnsen \cite{HT}:
one verifies with the Schur complement formula that $(P-z)^{-1}$ is the top-left $N\times N$ block of $(\LLz)^{-1}$, where $\LLz$ is the $3N\times 3N$ block matrix
\begin{equation}	\label{outline:LLz}
\LLz = \begin{pmatrix} -z &X_1 & X_2\\ X_2 & -\id & 0\\ X_1 & 0 & -\id \end{pmatrix}.
\end{equation}
In particular we have the deterministic bound
\begin{equation}
\|(\Pol-z)^{-1}\|_\op \le \|(\LLz)^{-1}\|_\op
\end{equation}
and so it suffices to show
\begin{equation}
\pr \{ \smin(\LLz) \le \eps\} \ls N^C\eps^c + e^{-N}.
\end{equation}
The advantage of this new problem is that $\LLz$ can be viewed as an $N\times N$ matrix with entries that are independent $3\times3$ random matrices: 
\begin{equation}	\label{outline:entries}
\Lez_{i,j}=
\begin{pmatrix} -z \delta_{i,j} & X_1(i,j) & X_2(i,j)\\ X_2(i,j) & -\delta_{i,j} & 0 \\ X_1(i,j) & 0 & -\delta_{i,j} \end{pmatrix}\;, \qquad i,j\in [N].
\end{equation}
(To avoid ambiguity in the proof we will actually write $\tLLz$ for the element of $\Mat_N(\Mat_3(\C))$ associated to $\Pol$, whereas we write $\LLz$ for the associated element of $\Mat_3(\Mat_N(\C))$ as represented in \eqref{outline:LLz}, but we avoid this notation here.)

\subsection{Step 2: Reduction to a random matrix of bounded dimension}

Now we write $\hLz_j$ for $3\times 3$ matrix obtained by projecting the three columns of $\col_j(\LLz) =: (\Lez_{i,j})_{i=1}^N\in \Mat_3(\C)^N\cong \Mat_{3N,3}(\C)$ to the orthogonal complement of the span $\subsV^z_{(j)}$ of the remaining $3N-3$ columns. Specifically, we let $\UU_j\in \Mat_{3N,3}(\C)$ have columns that are orthonormal in $(\subsV^z_{(j)})^\perp$, and put 
\begin{equation}	\label{outline:matwalk}
\hLz_j = \UU_j^* \col_j(\LLz) = \sum_{i=1}^N U_{i,j}^* \Lez_{i,j}
\end{equation}
where $U_{i,j}\in \Mat_3(\C)$ are the corresponding $3\times 3$ blocks of $\UU_j$. 
(We remark that, a posteriori, with high probability $\subsV^z_{(j)}$ has dimension 3 and the columns of $\UU_j$ are in fact a basis.)
The key is that $\col_j(\LLz)$ is independent of $\subsV^z_{(j)}$, and hence $\UU_j$ can be chosen independently of $\col_j(\LLz)$. 

Now on the event that $\smin(\LLz)\le \eps$, there exists a unit vector $\bv=(v_j)_{j=1}^N\in (\C^3)^N$ such that 
\[
\eps \ge \| \LLz \bv \|_2 = \Big\| \sum_{j=1}^N \col_j(\LLz) v_j \Big\|_2.
\]
Since $\bv$ has unit norm there must be some $j_0\in [N]$ such that $\|v_{j_0}\|_2 \ge N^{-1/2}$. Projecting to the orthocomplement of $V_{(j_0)}$ we obtain
\[
\eps  \ge  \Big\| \sum_{j=1}^N \UU_{j_0}^*\col_j(\LLz) v_j \Big\|_2 =  \| \UU_{j_0}^* \col_{j_0}(\LLz) v_{j_0} \|_2
= \| \hLz_{j_0} v_{j_0}\|_2 \ge N^{-1/2} \smin(\hLz_{j_0}). 
\]
After applying a union bound to fix $j_0$ (and using symmetry to assume $j_0=1$) we reduce to showing
\begin{equation}
\pr\{\smin(\hLz_1) \le \eps\} \ls N^C\eps^c+e^{-N}
\end{equation}
for an adjusted constant $C$.

\subsection{Step 3: Gaussian polynomial anti-concentration}

It will be convenient to instead bound the lower tail of $|\det(\hLz_1)|= \prod_{k=1}^3 \sigma_k(\hLz_1)$ rather than the smallest singular value. Since $\hLz_1$ has  bounded norm $O(1)$ with probability $1-O(e^{-N})$ (a property inherited from $\LLz$, which in turn inherits this from $X_1$ and $X_2$ via the triangle inequality), we have $|\det(\hLz_1)|\ls \smin(\hLz_1)$, so it suffices to show
\begin{equation}	\label{outline:goal3}
\pr\{ |\det(\hLz_1)|\le \eps\} \ls N^C \eps^c + e^{-N}.
\end{equation}
We condition on an arbitrary realization of  $\UU_1$ (recall it is independent of $\col_1(\LLz)$).
From \eqref{outline:matwalk} we can view the random matrix $\hLz_1$ as a random walk in $\Mat_3(\C)$ with independent steps 
$U_{i,1}^*\Lez_{i,1}$, 
and our aim is to control the probability that this walk lands in a small neighborhood of the codimension-one variety of singular matrices. 

For a single i.i.d.\ matrix $X$ the analogous problem of bounding
$
\pr\{ |u^* \col_1(X)|\le \eps\}
$
is easily handled when the entries $X(i,j)$ have bounded density, as one can condition on all components but some $j_0$ for which $u_{j_0}$ is not too small. 
In the present setting we cannot reduce so easily to consideration of a single step of the walk. 
Indeed, \eqref{outline:goal3} fails to hold for general $\UU_1$: 
consider for instance the case all of the $U_{i,1}$ are zero except for a single $U_{i_0,1}$ with $i_0\ge2$, in which case $\det(\hLz_1)=\det(U_{i_0,1})\det(\Le_{i_0,1}) \equiv 0$.
On the other hand, it seems unlikely that the random orthonormal set $\UU_1$ in $(\subsV^z_{(1)})^\perp$ would concentrate on a single submatrix $U_{i_0,1}$.

We highlight in particular a key challenge  for establishing \eqref{outline:goal3}: one notes from \eqref{outline:entries} that the random matrices $\Lez_{i,1}$ are themselves singular for $i\ge 2$.
Thus, the invertibility of $\hLz_1$ cannot rely on the randomness of the $\Lez_{i,1}$ alone, but must also come from the geometry of the sequence $(U_{1,1},\dots, U_{N,1})$. 

We proceed to identify sufficient structural conditions on the matrix $\UU_1$ in order to have \eqref{outline:goal3}.
Let us denote the columns of $U_{i,1}^*$ by $v_i^0, v_i^1,v_i^2\in \C^3$. 
For compactness of notation we write $\xi_i^a= X_a(i,1)$, $a=1,2$, for the entries of $X_1,X_2$ that enter in $\col_1(\LLz)$. 
For simplicity of exposition we ignore here the deterministic shift in $\Lez_{1,1}$ and focus on the random walk
\[
W = \sum_{i=1}^N  U_{i,1}^* \Le_{i,1}\; , \qquad \Le_{i,1} = 
\begin{pmatrix}
 0 & \xi_i^1&\xi_i^2\\ 
 \xi_i^2&0&0\\
 \xi_i^1& 0&0\end{pmatrix}.
\]
For a single step of the matrix random walk we have
\[
U_{i,1}^* \Le_{i,1} = \big( \xi_i^2 v_i^1 + \xi_i^1 v_i^2\, , \; \xi_i^1 v_i^0\,, \; \xi_i^2 v_i^0 \big).
\]
Expanding $\det(W)$ using multilinearity of the determinant, we have
\begin{align*}
p(\xi^1,\xi^2) = \det(W) 
&= \sum_{i_0,i_1,i_2\in [N]}\left\lbrace \xi_{i_0}^2 \xi_{i_1}^1\xi_{i_2}^2 \det( v_{i_0}^1, v_{i_1}^0, v_{i_2}^0) 
+ \xi_{i_0}^1 \xi_{i_1}^1\xi_{i_2}^2 \det( v_{i_0}^2, v_{i_1}^0, v_{i_2}^0) \right\rbrace
\end{align*}
where we view the determinant as a degree-3 polynomial $p$ in the i.i.d.\ complex Gaussian variables $\xi^1=(\xi_i^1)_{i\in [N]}, \xi^2 = (\xi_i^2)_{i\in [N]}$.
Now a simple consequence of the Carbery--Wright inequality (cf.\ \Cref{lem:anti-poly}) gives an anticoncentration bound of the form
\begin{equation}	\label{outline:unstruct}
\pr\{ |p(\xi^1,\xi^2)| \le \eps \} \ls N^{O(1)} \eps^{1/3}
\end{equation}
as soon as we can find a monomial of $p$ with coefficient of size at least $N^{-O(1)}$. 
It will be sufficient to focus on the coefficients of the $2N^2$ monomials $\xi_i^1(\xi_j^2)^2$ and $(\xi_i^1)^2 \xi_j^2$ for $1\le i,j\le N$, which are
\begin{equation}	\label{outline:coeff}
\Delta_{i,j}^1:= \det( v_j^1,v_i^0,v_j^0)\,, \qquad \Delta_{i,j}^2 := \det( v_i^2,v_i^0,v_j^0).
\end{equation}

\subsection{Step 4: Ruling out structured bases}

We hence obtain \eqref{outline:goal3} as soon as either $|\Delta_{i,j}^1|=N^{-O(1)}$ or $|\Delta_{i,j}^2|=N^{-O(1)}$ for some $i,j\in [N]$. 
We thus say that the set $\UU_1$ of orthonormal columns in $(\subsV^z_{(1)})^\perp$ is \emph{structured} if 
\begin{equation}	\label{outline:struct}
|\Delta_{i,j}^1|, \,|\Delta_{i,j}^2| \le N^{-\gamma} \qquad \forall\; i,j\in [N]
\end{equation}
for some $\gamma>0$ to be taken sufficiently large, and aim to show that, in the randomness of the remaining columns $\col_j(\LLz)$, $2\le j\le N$, $\UU_1$ is unstructured except with probability $1-O(e^{-N})$. 

Previous works on the invertibility of i.i.d.\ matrices have controlled the event that a set of columns has a structured normal vector using net arguments, and we do the same here. 
However, the notion of structure here is quite different from the arithmetic structure encountered in those works, as it involves an orthonormal set rather than a single normal vector, and involves relations between a set of tuples (in this case triples) of coordinates that is determined by the polynomial $\pol$. Indeed, one notes that the pattern of indices in \eqref{outline:coeff} results from the form of the linearization \eqref{outline:LLz}, which will be different for other polynomials (and in fact there are multiple linearizations one can consider for any given polynomial). 

To bound the probability that $\UU_1$ satisfies \eqref{outline:struct} we aim to construct an $\eps$-net $\cN$ for the set of all possible structured orthonormal bases $\UU=(U_1,\dots, U_N) \in \Mat_3(\C)^N\cong \Mat_{3N,3}(\C)$. Since $\UU_1^*\col_j(\LLz) = 0_{3\times3}$ for every $2\le j\le N$, then if $\UU_1$ is approximated within $\eps$ (in the Hilbert--Schmidt metric) by an element $\hUU$ of the net, we have $\|\hUU^*\col_j(\LLz)\|_\HS = O(\eps)$ for all $2\le j\le N$ (here we use that $\LLz$ has bounded operator norm with high probability).  Viewing the $3\times3$ random matrix $\hUU^*\col_j(\LLz)$ as a random vector in $\C^9$, after a bit of algebra one sees 
\[
\|\hUU^*\col_j(\LLz)\|_\HS = \| \Flat^* \bxi\|_2
\]
where $\bxi= (X_1(i,1),\dots, X_1(i,N), X_2(i,1),\dots, X_2(i,N))\in \C^{2N}$ is a Gaussian vector and
\begin{equation}	\label{outline:WW}
\Flat = \begin{pmatrix} \hU^1 & \hU^0& 0\\ \hU^2 & 0 & \hU^0 \end{pmatrix}  \in \Mat_{2N, 9}(\C)
\end{equation}
where $\hU^0,\hU^1,\hU^2\in \Mat_{N,3}(\C)$ denote the blocks of $\hUU$ under the partitioning of coordinates as in \eqref{outline:LLz}. 
Thus, the small ball probability for the matrix random walk $\hUU^*\col_j(\LLz)\in \Mat_3(\C)$ is controlled by that of the Gaussian vector $\Flat^*\bxi\in \C^9$, which in turn is determined by the effective rank $r$ of $\Flat$ (one can take $r$ to be the number of singular values of $\Flat$ that exceed some small fixed threshold). 
 
Note that the conditions \eqref{outline:struct} are geometric in nature, as they imply that a large collection of triples of the rows are not in general position, but lie in some hyperplane in $\C^3$. This is what allows for the construction of an efficient net for the set $\Struct$ of structured bases $\UU$.  
Supposing we can find a net of size $|\cN| = O(1/\eps^2)^{dN}$, where $0\le d\le 9$ is the effective metric dimension of $\Struct$ (which is a subset of the $9N$-dimensional complex vector space $\Mat_{3N,3}(\C)$, so that this estimate holds trivially for $d=9$) we can then bound the probability that $\UU_1$ is structured by
\[
\sum_{\hUU\in \cN} \pr\{ \|\hUU^*\col_j(\LLz)\|_\HS\le C\eps\} \le O(1/\eps^2)^{dN} \cdot O(\eps^2)^{r(N-1)} = O(\eps^2)^{2(r-d)N - 2r}.
\]
We thus hope to show that the effective rank $r$ of matrices $\Flat$ as in \eqref{outline:WW} is strictly larger than the effective metric dimension $d$ of $\Struct$. 
Note that the former is determined by relations between the columns of $\hUU$, while the latter is determined by relations between the rows. 

In fact one cannot show the effective rank $r$ is strictly larger than the effective dimension $d$ uniformly for all structured $\UU$ -- we will instead need to stratify the set $\Struct$ according to the effective rank and use the geometric relations \eqref{outline:struct} between rows together with the rank constraint to construct an efficient net on each stratum. 
We defer further explanation of this step to \Cref{sec:struct}.

\section{Notation and preliminaries}
\label{sec:notation}

\subsection{Asymptotic notation}

We use $C,C_0, c,c',$ etc.\ to denote positive, finite constants. If the constants depend on parameters $p,q,\dots$ we indicate this by writing $C=C(p,q,\dots)$. If no dependence on parameters is given then the constants are understood to be absolute.

For $g\in \R_+$ we write $O(g)$ to stand for a quantity  $f\in \C$ such that $|f|\le Cg$ for some absolute constant $C\in (0,+\infty)$.
For $f,g\in \R_+$, $f\ls g$ and $g\gs f$ mean that $f=O(g)$.
For a parameter (or list of parameters) $q$, we write $f=O_{q}(g)$ to mean $|f|\le C g$ for some $C=C(q)\in (0,+\infty)$, and similarly for $f\ls_qg,f\gs_q g$.

\subsection{Matrices} 
\label{sec:matrices}

For integers $n\ge m\ge1$ we use the common abbreviations
$[m,n]:= \{m,\dots, n\}$ and $[n]:=[1,n]$.  
We write $|I|$ for the cardinality of a finite set $I$,
and for $n<|I|$ we denote by $I\choose n$ the set of subsets of $I$ with cardinality $n$.

We write $\Mat_{n,m}(\cX)$ for the set of $n\times m$ matrices with entries in a set $\cX$, and abbreviate $\Mat_n(\cX):= \Mat_{n,n}(\cX)$. 
The $n\times n$ identity matrix is denoted $\id_n$, with the subscript sometimes omitted if it is clear from the context.
For $a\in \C$ we often write $a$ for $a\id_n$ when there can be no confusion.

We frequently work with block matrices, and the following more general matrix notation facilitates referral to their entries and submatrices of various dimensions. For finite indexing sets $S,T$ we write $\matt{S}{T}{\cX}$ for the set of $|S|\times |T|$ matrices with entries in $\cX$ and rows and columns indexed by $S$ and $T$, respectively. 
Thus, $\Mat_{n,m}(\cX) = \Mat_{[n]}^{[m]}(\cX)$. 
We write $\Mat_S(\cX):= \Mat_S^S(\cX)$ when there can be no confusion (we avoid this when $S$ is itself a product set).
For $A\in\matt{S}{T}{\cX}$ and $(s,t)\in S\times T$ we write $A(s,t)$ for the $(s,t)$ entry of $A$. 
A block matrix $\bAA$ that is an $n\times n'$ array of blocks $A^{k,\ell}\in \Mat_{N,N'}(\cX)$ is viewed as an element of 
\[
\matt{[n]\times[N]}{[n']\times[N']}{\cX} 
\]
and is naturally associated with an element of $\Mat_{n,n'}(\Mat_{N,N'}(\cX))$; we abusively write $\bAA$ for both. 
Thus,
\begin{equation}	\label{not:super}
\bAA(k,\ell) = A^{k,\ell}, \qquad 
\bAA((k,i),(\ell,j)) = A^{k,\ell}(i,j)
\end{equation}
for $(k,\ell)\in [n]\times[n']$ and $(i,j)\in [N]\times[N']$. 
We will sometimes index the rows by $I\times J$ for some other sets $I,J\subset \Z_{\ge 0}$ of size $n, N$, respectively, and similarly for the columns. 
It will often be convenient to view a block matrix ``inside-out'' -- that is, $\bAA$ is naturally associated to an element
\begin{equation}	\label{def:tmat}
\tAA\in \matt{[N]\times [n]}{[N']\times[n']}{\cX} \cong \Mat_{N,N'}(\Mat_{n,n'}(\cX))
\end{equation}
with $\tAA((i,k),(j,\ell)) = \bAA((k,i),(\ell,j))$.

Typically, one of the pairs $(N,N')$ will contain a ``large'' dimension (usually $(N,N')\in \{(N,N),(N,N-1),(N,1)\}$, with $N$ the size of the Ginibre matrices in Theorems \ref{thm:brown} and \ref{thm:pseudo}) while the other pair $(n,n')$ remains bounded (being related to parameters of the fixed polynomial $\pol$).
In this case we always use boldface for the block matrix $\bAA$, with its large submatrices $A^{k,\ell}$ indexed with superscripts and its small submatrices indexed with subscripts:
\begin{equation}
\tAA(i,j) = A_{i,j} \, , \qquad (i,j)\in [N]\times[N'].
\end{equation}
Thus
\begin{equation}	\label{inside-out}
\bAA((k,i),(\ell,j)) = A^{k,\ell}(i,j) = A_{i,j}(k,\ell) = \tAA((i,k),(j,\ell)).
\end{equation}
We always use indices $i,j,i_0,i_1,$ etc.\ for the large dimensions and $k,\ell,$ etc.\ for the small dimensions.

For $A\in \matt{S}{T}{\cX}$, we denote its rows and columns 
\[
\row_s(A) = (A(s,t))_{t\in T} \in \cX^{T}, \qquad \col_t(A) = (A(s,t))_{s\in S} \in \cX^S.
\]
We sometimes manipulate these as row and column vectors (note that the entries may be matrices). 
Thus, for $\bAA = (A^{k,\ell})\in \Mat_{n,n'}(\Mat_{N,N'}(\C))$ with $\tAA = (A_{i,j})\in \Mat_{N,N'}(\Mat_{n,n'}(\C))$ we have that
\begin{align*}
\col_\ell(\bAA) &= (A^{1,\ell},\dots, A^{n,\ell})\in \Mat_{N,N'}(\C)^n, \qquad \ell\in [n']\\
\col_j(\tAA) &= (A_{1,j},\dots, A_{N,j})\in \Mat_{n,n'}(\C)^N, \qquad \ell\in [N']
\end{align*}
are vectors of matrices,
whereas
\begin{align*}
\col_{(\ell,j)}(\bAA) & = (\bAA((k,i),(\ell,j)))_{(k,i)\in [n]\times[N]}\in \C^{[n]\times[N]}, \qquad (\ell,j)\in [n']\times [N']\\
\col_{(j,\ell)}(\tAA) & = (\bAA((k,i),(\ell,j)))_{(i,k)\in [N]\times[n]}\in \C^{[N]\times[n]}, \qquad (j,\ell)\in [N']\times [n']
\end{align*}
are arrays of scalars, which we view as vectors with block coordinate structure.

For $A\in \matt{S}{T}{\cA}$ and $B\in \matt{T}{U}{\cA}$ with $\cA$ a $*$-algebra we define the product
\[
AB = \big(\row_s(A)\cdot \col_u(B)\big)_{s\in S,u\in U} = \Big( \sum_{t\in T} A(s,t)B(t,u) \Big)_{s\in S,u\in U} \in \matt{S}{U}{\cA}
\]
and the conjugate transpose
\[
A^* = \big(A(t,s)^*\big)_{t\in T,s\in S} \in \matt{T}{S}{\cA}
\]
in the usual way.
For $A\in \matt{S}{T}{\cA}$ and $B\in \matt{U}{V}{\cA}$ we denote the tensor product $A \otimes B\in \matt{S\times U}{T\times V}{\cA}$ with entries
\[ 
(A \otimes B)((s,u),(t,v)) = 
A(s,t)B(u,v).
\]

\subsection{Norms and singular values}

For finite indexing sets $S,T$ we equip $\C^T$ with the Euclidean inner product $\langle\cdot,\cdot\rangle$ and $\ell_2$-norm $\|\cdot\|_2$,
and $\matt{S}{T}{\C}$ with the Hilbert--Schmidt norm
\[
\|A\|_\HS = \Big( \sum_{s\in S, t\in T} |A(s,t)|^2 \Big)^{1/2}.
\]
We let $\ball^{T}\subset \C^T$ and $\ball_S^T\subset \matt{S}{T}{\C}$ denote the closed unit balls under the Euclidean $\ell_2$ and Hilbert--Schmidt norms, respectively.
We denote the boundary of $\ball^T$ by $\sph^T$. For $T=[n]$ we write $\ball^n$, $\sph^{n-1}$. 
We sometimes write $\dist(u,v):= \|u - v\|_2$ for the Euclidean metric on $\C^T$.
We also equip $\matt{S}{T}{\C}$ with the $\ell_2(T)\to \ell_2(S)$ operator norm
\[
\|A\|_\op = \sup_{v\in \sph^T} \| Av\|_2\,.
\]
For $A\in \matt{S}{T}{\C}$ we label its singular values in non-increasing order:
\[
\|A\|_{\op}=\sigma_1(M)\ge \cdots\ge \sigma_{|S|\vee|T|}(M) \ge 0\,,
\]
where the last $|S|\vee|T|-|S|\wedge|T|$ singular values are trivially zero, and denote by
\[
\sigma_{\min}(A):= \sigma_{|S|\wedge|T|}(M)\ge0
\]
the smallest nontrivial singular value (which may be zero). 
In particular, for $|S|=|T|$ we have that $A$ is invertible if and only if $\sigma_{\min}(A)>0$. 
For a square block matrix $\bAA\in \matt{[n]\times[N]}{[n]\times [N]}{\C}$ we have $\sigma_l(\bAA) = \sigma_l(\tAA)$ for all $1\le l\le nN$, as the singular values are invariant under relabeling of the rows and columns.

For subsets $E_1,\dots, E_m$ of a vectors space we write $\Span(E_1,\dots, E_m):= \Span(E_1\cup \cdots E_m)$ for the linear span of their union. With slight abuse we allow some of the $E_k$ to be single points, understood to denote the singleton sets $\{E_k\}$. 
For $A\in \matt{S}{T}{\C}$, by $\Span(A)$ we mean the linear span of its columns in $\C^S$. 
For vectors $v_j\in \C^S$, $j\in J$ we sometimes write  $\langle v_j: j\in J \rangle = \Span(\{ v_j\}_{j\in J})$ and $\langle v_j\rangle = \Span(v_j)$. $v_{j}\wedge v_{k}$ denotes the wedge product of $v_{j}$ and $v_{k}$.

\subsection{Basic facts}
\label{sec:facts}

Recall that an \emph{$\eps$-net} for a subset $E$ of a metric space $(\cX,d)$ is a finite set $\Sigma\subset E$ such that 
\[
\sup_{x\in E} \min_{y\in \Sigma} d(x,y) <\eps.
\]
In the sequel all nets will be with respect to the appropriate Euclidean metric (which for matrices coincides with the Hilbert--Schmidt norm).
The following is standard: 

\begin{lemma}\label{lem:net}
There is an absolute constant $C>0$ such that for 
any subset $E$ of the closed ball of radius $R$ in $\C^d$ with Euclidean metric
and any $\eps\in (0,R)$,
there is an $\eps$-net for $E$ of cardinality at most $(CR/\eps)^{2d}$.
\end{lemma}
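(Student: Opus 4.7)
The plan is the standard volumetric packing argument, adapted to the complex setting. First I would produce a maximal $(\eps/2)$-separated subset $\Sigma \subset E$, built greedily: iteratively add any point of $E$ at distance at least $\eps/2$ from every point chosen so far. This process terminates because $E$ is bounded and each added point contributes a disjoint open ball of radius $\eps/4$ of positive volume (one could equivalently invoke Zorn's lemma). By maximality, every $y \in E$ satisfies $\|y - x\|_2 < \eps/2$ for some $x \in \Sigma$, and hence $\sup_{y \in E} \min_{x \in \Sigma} \|y-x\|_2 \le \eps/2 < \eps$, so $\Sigma$ is a legitimate $\eps$-net in the strict sense required by the definition preceding the lemma.

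To bound $|\Sigma|$, I would identify $\C^d$ with $\R^{2d}$ equipped with the standard Euclidean inner product, so Lebesgue measure is available and the $\ell_2$-norm is preserved. Since the points of $\Sigma$ are pairwise at distance at least $\eps/2$, the open Euclidean balls of radius $\eps/4$ centered at the points of $\Sigma$ are pairwise disjoint, and because $\Sigma \subset E$ lies in the ball of radius $R$ about the origin, all of them are contained in the ball of radius $R + \eps/4$ about the origin. Comparing $\R^{2d}$-Lebesgue volumes, which scale as the $2d$-th power of the radius, yields
\[
|\Sigma| \cdot (\eps/4)^{2d} \;\le\; (R + \eps/4)^{2d}.
\]
Using the hypothesis $\eps < R$ to bound $R + \eps/4 \le (5/4) R$, we conclude $|\Sigma| \le (5R/\eps)^{2d}$, which is the claim with $C = 5$.

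There is no substantive obstacle here — this is a textbook packing estimate, and the bound on $E$ being a subset of some particular large set (rather than $E$ itself being nicely shaped) is harmless because the argument only uses that $E$ is contained in a ball of radius $R$. The only point worth emphasizing is the passage from complex to real dimension: because $\C^d$ has real dimension $2d$, volumes of Euclidean balls scale as the $2d$-th power of the radius, which produces the exponent $2d$ in the stated bound rather than $d$.
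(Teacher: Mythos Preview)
Your argument is correct and is precisely the standard volumetric packing proof the paper has in mind; the paper in fact gives no proof at all, merely stating that the lemma ``is standard,'' so there is nothing to compare against beyond noting that your write-up supplies the expected details.
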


The following two lemmas provide control on the norm and the norm of the inverse of the individual Ginibre inputs $X_1,\dots, X_\num$ for the polynomial $\Pol$.

\begin{lemma}
\label{lem:norm}
There is a constant $C_0<\infty$ such that the following holds. 
For any $N\ge 1$ and $X$ an $N\times N$ Ginibre matrix, 
\[
\pr\{ \|X\|_\op>C_0\} \ls e^{-N}.
\]
\end{lemma}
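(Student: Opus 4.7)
This is a standard concentration estimate for the spectral norm of a Gaussian matrix, and I would prove it via an $\eps$-net argument on the complex unit sphere. The starting point is the variational formula
\[
\|X\|_\op = \sup_{u,v \in \sph^{N-1}} |\langle X u, v\rangle|,
\]
where the sphere is the complex unit sphere in $\C^N$.

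By \Cref{lem:net} applied with $d = N$, there is a $(1/4)$-net $\cN$ for $\sph^{N-1}$ of cardinality at most $K^{N}$ for an absolute constant $K$. A routine approximation argument (choose $u,v \in \sph^{N-1}$ achieving $\|X\|_\op$ up to $o(1)$, round them to $\cN$, and absorb the error into the sup) gives the deterministic bound
\[
\|X\|_\op \le 2 \max_{u,v \in \cN} |\langle X u, v\rangle|.
\]
For each fixed pair $u,v \in \sph^{N-1}$, the scalar $\langle X u, v\rangle = \sum_{i,j} X_{i,j} \bar v_i u_j$ is a centered complex Gaussian with variance
\[
\E|\langle X u, v\rangle|^2 = \sum_{i,j} |u_j|^2 |v_i|^2 \cdot \E|X_{i,j}|^2 = \frac{1}{N},
\]
so the complex Gaussian tail gives $\pr\{|\langle X u, v\rangle| > t\} \le 2 e^{-N t^2 / 2}$ for all $t > 0$.

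Union-bounding over the at most $K^{2N}$ pairs in $\cN \times \cN$ and taking $t = C_0 / 2$, I obtain
\[
\pr\{\|X\|_\op > C_0\} \le 2 K^{2N} e^{-N C_0^2 / 8},
\]
which is $\ls e^{-N}$ as soon as $C_0$ is chosen large enough in terms of the absolute constant $K$. There is no real obstacle here; the argument is textbook and the exponent on the right trivially beats the volume factor $K^{2N}$ once $C_0$ is sufficiently large. An alternative — slightly sharper but not needed for this lemma — would be to combine the estimate $\E\|X\|_\op = O(1)$ (e.g., from Gordon's comparison inequality) with Gaussian concentration for the $(1/\sqrt{N})$-Lipschitz function $X \mapsto \|X\|_\op$, yielding concentration around the mean; but the net argument above suffices for the stated bound.
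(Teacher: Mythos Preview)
Your proof is correct. The paper does not actually give an argument for this lemma; it simply cites \cite{RuVe:LO} for the sub-Gaussian case and remarks that one reduces the complex Ginibre matrix to the real case via $\|X\|_\op \le \|\Re X\|_\op + \|\Im X\|_\op$. Your self-contained $\eps$-net argument is exactly the standard proof that lies behind such citations (and you correctly invoke the paper's \Cref{lem:net} for the net cardinality), so the approaches are essentially the same; the only cosmetic difference is that you treat the complex case directly rather than reducing to the real case.
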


\begin{proof}
This holds more generally for matrices with independent complex uniformly-sub-Gaussian entries; see for instance \cite{RuVe:LO} (one reduces to the real Ginibre case with the triangle inequality).
\end{proof}

\begin{lemma}
\label{lem:smin}
Let $N\ge1$ and $X$ an $N\times N$ Ginibre matrix. For any  deterministic $M\in \Mat_N(\C)$ and any $\eps>0$
\[
\pr\{ \smin(X+M) \le \eps \} \ls N^3\eps^2 \,,
\]
where we emphasize that (following our previously stated convention) the implied constant is absolute and in particular is independent of the shift $M$.
\end{lemma}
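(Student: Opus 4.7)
The plan is to reduce the lower bound on $\smin(X+M)$ to a distance problem for the columns, exploiting the independence of the columns of $X$ and the boundedness of the Gaussian density.

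First, I would invoke the standard geometric identity
\[
\smin(X+M) \;\ge\; \frac{1}{\sqrt{N}}\min_{j\in[N]} \dist\bigl(\col_j(X+M),\, \subsV_{(j)}\bigr),
\qquad \subsV_{(j)} := \Span\{\col_i(X+M): i\ne j\}.
\]
This follows from the usual argument: if $\|(X+M)v\|_2 \le \eps$ and $v\in \sph^{N-1}$, pick $j$ with $|v_j|\ge 1/\sqrt{N}$; then $v_j\col_j(X+M) = (X+M)v - \sum_{i\ne j} v_i\col_i(X+M)$, so $\dist(\col_j(X+M), \subsV_{(j)}) \le \eps/|v_j| \le \sqrt{N}\,\eps$. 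Hence it suffices to bound, for each fixed $j$,
\[
\pr\bigl\{\dist(\col_j(X+M), \subsV_{(j)}) \le \sqrt{N}\,\eps\bigr\} \;\lesssim\; N^2\eps^2,
\]
after which a union bound over the $N$ columns yields the desired $N^3\eps^2$.

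To establish this per-column bound, I would condition on the other columns $\{\col_i(X): i\ne j\}$, which are independent of $\col_j(X)$. After conditioning, $\subsV_{(j)}$ is a deterministic subspace of dimension at most $N-1$, and writing $\Pi$ for the orthogonal projection onto $\subsV_{(j)}^{\perp}$ (of rank at least $1$), we have
\[
\dist(\col_j(X+M), \subsV_{(j)}) \;=\; \bigl\|\Pi\col_j(X) + \Pi\col_j(M)\bigr\|_2.
\]
Now $\Pi\col_j(X)$ is a centered complex Gaussian vector with covariance $\tfrac{1}{N}\Pi$; in particular, its restriction to any one-dimensional subspace of $\subsV_{(j)}^{\perp}$ is a centered complex Gaussian with variance $1/N$, hence has density on $\C$ bounded by $N/\pi$. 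Projecting further to a single complex dimension inside $\subsV_{(j)}^{\perp}$ only decreases the $\ell_2$-norm, so
\[
\pr\bigl\{\|\Pi\col_j(X) + \Pi\col_j(M)\|_2 \le \sqrt{N}\,\eps \,\big|\, \subsV_{(j)}\bigr\}
\;\le\; \tfrac{N}{\pi}\cdot\pi(\sqrt{N}\,\eps)^2 \;=\; N^2\eps^2,
\]
uniformly in the conditioning and in $M$ (the shift $\Pi\col_j(M)$ only translates the center of the Gaussian, and the density bound is translation-invariant). Taking expectation and applying the union bound completes the proof.

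There is no real obstacle here: the only subtlety is being careful that the density bound used is independent of $M$ and of the conditioning on $\subsV_{(j)}$, which is why one works with the one-dimensional projection of a rotationally invariant Gaussian rather than trying to use a density of the full distance random variable (whose distribution depends on $\dim \subsV_{(j)}^{\perp}$ and on $\Pi\col_j(M)$). The $N^3$ factor is a loss inherent to this approach that could in principle be improved via the negative moment method, but is sufficient here.
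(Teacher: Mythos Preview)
Your proof is correct and follows essentially the same approach as the paper: reduce to the distance from a column to the span of the remaining columns, condition on the latter, and use that the projection of the Gaussian column onto a one-dimensional subspace of the orthocomplement is a complex Gaussian of variance $1/N$ with bounded density. The only cosmetic difference is that the paper invokes rotational invariance to rotate $\subsV_{(j)}$ to a coordinate subspace, whereas you work directly with the projection $\Pi$; these are equivalent.
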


We include the short proof below for completeness.
We remark in passing that Edelman obtained the sharp bound of $N^2\eps^2$ (with no constant factor loss) on the left hand side for the unshifted case $M=0$ \cite{Edelman:condition}. The sharpening to $O(N^2\eps^2)$ for the shifted case can be found in \cite{BKMS} (cf.\ Lemma 3.3 there), extending the analogous bound of $O(N\eps)$ for real shifts of real Ginibre matrices from \cite{SST}.

\begin{proof}
Write $A= X+M$. On the event that $\smin(A)\le \eps$, there exists $u\in \sph^{N-1}$ such that $\|Au\|_2\le \eps$. Moreover, since $u$ must have a coordinate $i$ such that $|u_i|\ge 1/\sqrt{N}$, we obtain after projecting the vector $Au$ to the orthocomplement of $A_{-i}:=\Span \{\col_j(A): j\ne i\}$ that 
\[
\dist( \col_i(A) , A_{-i}) \le \eps\sqrt{N}.
\]
By rotational invariance of the distribution of $\col_i(X)$, the probability of the above event is equal to 
\[
\pr\{ \dist(\col_i(X) + y, \Span\{e_2,\dots, e_N\}) \le \eps\sqrt{N} \}
\]
for a vector $y$ that depends only on $M$ and $A_{-i}$, and hence is independent of $\col_i(X)$.
Conditioning on the columns $\col_j(X)$ with $j\ne i$, the above is equal to
\[
\pr\{ |X(1,i) + y(1) | \le \eps\sqrt{N}\} \ls \eps^2N^2,
\]
where we used that $\sqrt{N}\cdot X(1,i)\sim \Normal_\C(0,1)$ has bounded density on $\C$. 
Taking a union bound over the possible choices of $i$ yields the claim.
\end{proof}

\section{Linearization}
\label{sec:linearization}

Recall that $\C\langle\xx\rangle=\C\langle x_1,\dots, x_n\rangle$ is the set of polynomials with complex coefficients in $n$ non-commuting indeterminates $x_1,\dots, x_n$. 
We express $\pol\in \C\langle\xx\rangle$ of degree two as
\begin{equation}	\label{pol.express}
\pol(\xx) = \sum_{\ell,m=1}^\num a_{\ell, m} x_\ell x_m + \sum_{\ell=1}^\num b_\ell x_\ell + \shift.
\end{equation}

\begin{lemma}[Linearization]
\label{lem:linearize}
Let $\cA$ be a unital $*$-algebra over $\C$, 
let $x_1,\dots, x_\num\in \cA$, and let
$\pol \in \C\langle \xx \rangle$ have degree two. 
Let $\rnk\in [\num]$ be the rank of the matrix $A=(a_{\ell,m})$ as in \eqref{pol.express}.
There exist vectors
$
\rlin_1,\dots,\rlin_\rnk, \slin_0, \slin_1,\dots, \slin_\rnk \in \C^\num
$
such that $\rlin_1,\dots, \rlin_{\rnk}$ are orthonormal and $\slin_1,\dots,\slin_{\rnk}$ are nonzero and orthogonal (note we leave out $\slin_0$), and the following holds.
Define the linear mapping
\begin{equation}	\label{def:lin}
\Lin=\Lin_\pol: \cA^\num \to \Mat_{[0,\rnk]}(\cA), \qquad 
\Lin(\xx):= 
\begin{pmatrix}
\langle \slin_0, \bx\rangle & \langle\rlin_1,\bx\rangle & \cdots & \langle\rlin_\rnk,\bx\rangle \\
\langle\slin_1,\bx\rangle & 0 & \cdots & 0 \\
\vdots & \vdots & \ddots & \vdots \\
\langle\slin_\rnk,\bx\rangle & 0 &\cdots & 0 \\
\end{pmatrix}
\end{equation}
and for $z\in \C$ set
\begin{equation}	\label{def:linz}
\Linz=\Linz_\pol: \cA^\num \to \Mat_{[0,\rnk]}(\cA), \qquad 
\Linz(\xx):= \Lin(\xx) +
\Kez
\otimes 1_\cA
\end{equation}
where 
\begin{equation}	\label{def:shift}
\Kez=\Kez_\pol:=
\begin{pmatrix}
\shift-z & 0 \\ 
0 & -\id_\rnk
\end{pmatrix} 
\in \Mat_{\DD}(\C)\,.
\end{equation}
We have that
for any $z\in\C$,
\begin{equation}
(\pol-z)^{-1} = ((\cL^z(\xx))^{-1})_{1,1}.
\end{equation}
\end{lemma}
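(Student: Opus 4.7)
The plan is to obtain the linearization by a Schur complement computation, after choosing the vectors $\slin_0,\slin_1,\dots,\slin_\rnk$ and $\rlin_1,\dots,\rlin_\rnk$ so that the Schur complement of $\Linz(\xx)$ with respect to its invertible bottom-right $\rnk\times\rnk$ block reproduces $\pol(\xx)-z$. Decompose
\[
\Linz(\xx)=\begin{pmatrix}\alpha & \beta\\ \gamma & \delta\end{pmatrix},
\]
with scalar corner $\alpha=\langle\slin_0,\xx\rangle+(\shift-z)\in\cA$, row $\beta=(\langle\rlin_1,\xx\rangle,\dots,\langle\rlin_\rnk,\xx\rangle)$, column $\gamma=(\langle\slin_1,\xx\rangle,\dots,\langle\slin_\rnk,\xx\rangle)^\tran$, and bottom-right block $\delta=-\id_\rnk\otimes 1_\cA$. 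The crucial point is that $\delta=-\id_\rnk$ is a scalar matrix, so $\delta^{-1}=-\id_\rnk$ commutes with every element of $\cA$ and the noncommutative Schur complement formula applies without modification: $\Linz(\xx)$ is invertible in $\Mat_\DD(\cA)$ iff
\[
\alpha-\beta\delta^{-1}\gamma=\alpha+\sum_{k=1}^{\rnk}\langle\rlin_k,\xx\rangle\langle\slin_k,\xx\rangle
\]
is invertible in $\cA$, in which case the $(1,1)$ block of $(\Linz(\xx))^{-1}$ equals the inverse of this expression.

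It therefore suffices to choose the vectors so that $\alpha+\sum_k\langle\rlin_k,\xx\rangle\langle\slin_k,\xx\rangle=\pol(\xx)-z$. Splitting $\pol(\xx)=\pol_2(\xx)+\pol_1(\xx)+\shift$ into its homogeneous pieces, the degree-one part $\pol_1(\xx)=\sum_\ell b_\ell x_\ell$ is absorbed by taking $\slin_0$ to be the coefficient vector $(b_\ell)_{\ell=1}^\num$ (up to conjugation, depending on the convention adopted for $\langle\cdot,\cdot\rangle$), so that $\langle\slin_0,\xx\rangle=\pol_1(\xx)$. For the quadratic part, the coefficient matrix $A=(a_{\ell,m})\in\Mat_\num(\C)$ has rank $\rnk$, and a singular value decomposition $A=U\Sigma V^*$ produces orthonormal left factors $\rlin_k$ and orthogonal nonzero right factors $\slin_k=\sigma_k v_k$ with $A=\sum_{k=1}^{\rnk}\rlin_k\slin_k^*$; reading off the entries of this rank-$\rnk$ identity against the basis of products $x_\ell x_m$ gives the bilinear identity $\pol_2(\xx)=\sum_{k=1}^{\rnk}\langle\rlin_k,\xx\rangle\langle\slin_k,\xx\rangle$ in $\cA$ (any mismatch in conjugation conventions between the pairing $\langle\cdot,\cdot\rangle$ and the SVD is absorbed at this stage by replacing $A$ with its entrywise conjugate).

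Combining the two decompositions yields
\[
\alpha+\sum_{k=1}^{\rnk}\langle\rlin_k,\xx\rangle\langle\slin_k,\xx\rangle=(\shift-z)+\pol_1(\xx)+\pol_2(\xx)=\pol(\xx)-z,
\]
and the Schur complement identity established above then gives the claim $(\pol(\xx)-z)^{-1}=((\Linz(\xx))^{-1})_{1,1}$ whenever either side is defined. There is no substantial obstacle to this plan: the only step requiring a moment's care is the legitimacy of Schur complement in the noncommutative algebra $\cA$, and that is immediate because the inverted block $\delta$ is scalar. The genuine content of the lemma is the SVD-based decomposition of the quadratic form's coefficient matrix, which simultaneously fixes the size $\rnk+1$ of the linearization and the orthogonality of the $\rlin_k$'s and $\slin_k$'s that will be exploited in the subsequent sections.
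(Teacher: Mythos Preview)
Your proposal is correct and follows essentially the same route as the paper: choose $\slin_0$ to encode the linear coefficients, use the SVD of $A$ to produce orthonormal $\rlin_k$ and orthogonal nonzero $\slin_k$ so that $\pol_{(2)}(\xx)=\sum_k\langle\rlin_k,\xx\rangle\langle\slin_k,\xx\rangle$, and then apply the Schur complement formula with respect to the scalar block $-\id_\rnk$. Your remark that the Schur complement is legitimate because the inverted block is scalar, and your flag about absorbing conjugation conventions, are both accurate and match what the paper does implicitly.
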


For the proof we recall the Schur complement formula: for $M\in \Mat_{m+n}(\cA)$ written in block form as
\[
M= \begin{pmatrix} M_{1,1} & M_{1,2} \\ M_{2,1} & M_{2,2} \end{pmatrix}, \quad M_{1,1} \in \Mat_m(\cA), \; M_{2,2}\in \Mat_n(\cA)
\]
with $M_{2,2}$ invertible, we have that the top-left $m\times m$ block of $M^{-1}$ is given by
\[
(M^{-1})_{1,1} = \big( M_{1,1} - M_{1,2} (M_{2,2})^{-1} M_{2,1} \big)^{-1}.
\]

\begin{proof}
For $\slin_0= (\slin_{0,1},\dots, \slin_{0,\num})$ we take $\slin_{0,\ell} = \overline{b_\ell}$. 
Write $\pol_{(2)}(\bx) = \sum_{\ell, m=1}^\num a_{\ell,m} x_\ell x_m$ for the homogeneous degree-2 part of $\pol(\bx)$.
Let $A= U\Sigma V^*$ be the singular value decomposition for $A$, with $U=(u_{\ell, k}), V= (v_{m,k})\in \Mat_{\num,\rnk}(\C)$, and $\Sigma =\diag (\sigma_1,\dots, \sigma_\rnk)$ with $\sigma_1\ge\dots\ge \sigma_\rnk>0$.
For $1\le k\le \rnk$ and $1\le \ell\le \num$, we set $\rlin_{k,\ell} := \overline{u_{\ell,k}}$ and $\slin_{k,\ell} = \sigma_kv_{\ell, k}$, so that
\begin{align*}
\pol_{(2)}(\bx) 
&= \sum_{k=1}^\rnk \sigma_k \sum_{\ell=1}^\num u_{\ell, k} x_\ell \sum_{m=1}^\num \overline{v_{m, k}} x_m\\
&= \sum_{k=1}^\rnk \bigg( \sum_{\ell=1}^\num \overline{\rlin_{k,\ell}} x_\ell \bigg) \bigg( \sum_{m=1}^\num \overline{\slin_{k,m}} x_m \bigg)\\
& = \sum_{k=1}^\num \langle r_k, \bx\rangle \langle s_k,\bx\rangle.
\end{align*}
With $\Linz(\bx)$ as in \eqref{def:linz}, by the Schur complement formula we have
\[
((\Linz(\bx))^{-1})_{1,1} = \Big(\, \langle \slin_0,\bx\rangle +\shift -z + \sum_{k=1}^\rnk \langle \rlin_k,\bx\rangle \langle \slin_k,\bx\rangle \,\Big)^{-1} = (\pol(\bx) - z)^{-1}
\]
as desired.
\end{proof}

\begin{cor}
\label{cor:linearize}
Let $\XX=(X_1,\dots, X_\num)$, and $\Pol$ be as in \Cref{thm:pseudo}, with $\pol$ as in \eqref{pol.express}. With $\rnk$ the rank of $A=(a_{\ell,m})$, there exist vectors $\slin_0,\slin_1,\dots, \slin_\rnk \in \C^\num$ depending only on $\pol$, 
with $\slin_1,\dots,\slin_\rnk$ nonzero and mutually orthogonal, such that the following holds. 
Let
\begin{equation}	\label{def:LLz}
\LLz :=
\LL+ \Kez\otimes \id_N
\end{equation}
with $\Kez$ as in \eqref{def:shift} and
\begin{equation}
\LL := 
\begin{pmatrix} 
Y_0 & X_1 & \cdots & X_\rnk \\ 
Y_1 & 0 & \cdots & 0\\ 
\vdots & \vdots & \ddots & \vdots \\
Y_\rnk & 0 & \cdots & 0 
\end{pmatrix}\,,
\end{equation}
where
\begin{equation}
Y_k := \sum_{\ell=1}^\num \overline{\slin_{k,\ell}} X_\ell, \qquad k\in [0,\rnk].
\end{equation}
For all $\eps>0$ and $z\in \C$, 
\begin{equation}	\label{bd.lin}
\P\{ \sigma_{\min}(P-z)\le \eps\} \le \P\{ \sigma_{\min}(\LLz)\le \eps\} .
\end{equation}
\end{cor}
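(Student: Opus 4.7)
The plan is to apply Lemma \ref{lem:linearize} in the $*$-algebra $\cA = \Mat_N(\C)$ with $x_\ell = X_\ell$. Doing so produces a $(\rnk+1)\times(\rnk+1)$ block matrix $\Linz_\pol(\XX)$ with $N\times N$ blocks: its $(0,0)$-block is $(\shift-z)\id_N+\langle \slin_0,\XX\rangle$, its $(0,k)$-block is $\langle \rlin_k,\XX\rangle$, its $(k,0)$-block is $\langle \slin_k,\XX\rangle$, and its remaining diagonal blocks are $-\id_N$. To bring this into the form of $\LLz$ in the statement, where the $(0,k)$-blocks read simply $X_k$ rather than $\langle \rlin_k,\XX\rangle$, I would invoke the rotational invariance of the joint distribution of the complex Ginibre tuple: since $\rlin_1,\dots,\rlin_\rnk$ are orthonormal, I extend them to an orthonormal basis of $\C^\num$ and apply the unitary change of variables $\tilde X_k := \langle \rlin_k,\XX\rangle$, after which $(\tilde X_1,\dots,\tilde X_\num)\eqd (X_1,\dots,X_\num)$. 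After relabeling, the linearization takes exactly the form $\LLz$ displayed in the statement, with the new $\slin_k$'s obtained by transforming the original ones via the same unitary; orthogonality and nonvanishing of $\slin_1,\dots,\slin_\rnk$ are preserved because unitary maps preserve the Hermitian inner product on $\C^\num$.

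Next I would deduce the comparison of smallest singular values. Lemma \ref{lem:linearize} states that whenever $\LLz$ is invertible, the top-left $N\times N$ block of $(\LLz)^{-1}$ equals $(\Pol-z)^{-1}$. Since the operator norm of any submatrix is dominated by the operator norm of the full matrix, this gives
\[
\|(\Pol-z)^{-1}\|_\op \;\le\; \|(\LLz)^{-1}\|_\op,
\]
which, upon taking reciprocals, reads $\sigma_{\min}(\LLz)\le \sigma_{\min}(\Pol-z)$. On the complementary event that $\LLz$ is singular, $\sigma_{\min}(\LLz) = 0 \le \eps$ holds trivially. Hence the deterministic event inclusion $\{\sigma_{\min}(\Pol-z)\le \eps\}\subseteq \{\sigma_{\min}(\LLz)\le \eps\}$ holds, which after the distributional identification above yields the probability bound \eqref{bd.lin}.

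This corollary is a relatively soft consequence of Lemma \ref{lem:linearize}, so no step is a genuine obstacle. The only item requiring mild care is the bookkeeping around rotational invariance that identifies the abstract linearization of Lemma \ref{lem:linearize} with the explicit block matrix $\LLz$ displayed in the statement; once this is done, the singular value comparison is immediate from the Schur-complement identity already used to prove Lemma \ref{lem:linearize} together with the trivial submatrix bound on operator norms.
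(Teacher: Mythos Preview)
Your proposal is correct and follows essentially the same approach as the paper: apply \Cref{lem:linearize} to obtain the deterministic submatrix bound $\|(\Pol-z)^{-1}\|_\op \le \|(\Linz(\XX))^{-1}\|_\op$, then use rotational invariance of the Ginibre tuple (extending $\rlin_1,\dots,\rlin_\rnk$ to an orthonormal basis of $\C^\num$) to identify $\Linz(\XX)$ in distribution with the explicit block matrix $\LLz$. Your added remark handling the null event where $\LLz$ is singular is a harmless bit of extra care not spelled out in the paper.
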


\begin{proof}
We apply \Cref{lem:linearize} to get a linearized matrix $\Linz(\XX)$ as in \eqref{def:linz}, with
\begin{equation*}	
(\Pol-z)^{-1} = ((\Linz(\XX))^{-1})_{1,1}
\end{equation*}
where the right hand side is the $N\times N$ top-left block of $(\Linz(\XX))^{-1}$. In particular,
\begin{equation}\label{corlin1}
\|(\Pol-z)^{-1}\|_\op = \|((\Linz(\XX))^{-1})_{1,1}\|_\op \le \|(\Linz(\XX))^{-1}\|_\op.
\end{equation}
Now extend $\rlin_1,\dots, \rlin_\rnk$ to an orthonormal basis $\C^n$ and let $R$ be the unitary matrix with rows $\rlin_1,\dots, \rlin_\num$. Replacing the $N^2$ independent complex standard Gaussian vectors $\XX(i,j)=(X_1(i,j),\dots, X_\num(i,j))$ with $R^*\XX(i,j)$ and $\slin_0,\dots, \slin_\rnk$ with $\slin_0R^*, \dots, \slin_\rnk R^*$ we obtain the matrix \eqref{def:LLz}. 
The claim now follows from \eqref{corlin1} and the invariance of the complex standard Gaussian measure on $\C^\num$ under unitary transformations.
\end{proof}

It will be convenient later in the proof to assume $\rnk\ge 2$, so we now dispense with the case that $\rnk=1$. 

\begin{lemma}
\label{lem:rnk1}
\Cref{thm:pseudo} holds for the case that $A=(a_{\ell,m})$ as in \eqref{pol.express} has rank one. 
\end{lemma}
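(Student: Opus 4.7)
The plan is to exploit the exact product structure available when $\rnk = 1$. By Lemma \ref{lem:linearize} we may write $\pol(\bx) = \langle r_1, \bx\rangle \langle s_1, \bx\rangle + \langle s_0, \bx\rangle + \shift$ with $\|r_1\| = 1$ and $s_1 \neq 0$, hence $\Pol - z = RS + T + (\shift - z)\id_N$, where $R = \langle r_1, \XX\rangle$ is a standard complex Ginibre matrix and $S = \langle s_1, \XX\rangle$, $T = \langle s_0, \XX\rangle$ are other linear combinations of $X_1, \dots, X_\num$.

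Next, I would decompose orthogonally in $\C^\num$: write $s_1 = p\,r_1 + s_1^\perp$ with $s_1^\perp \perp r_1$, and $s_0 = \bar a\,r_1 + \bar b\,s_1 + s_0^\perp$ with $s_0^\perp \perp \Span(r_1, s_1)$. By unitary invariance of the Gaussian measure, this gives $S = \bar p\, R + S^\perp$, where $S^\perp$ is $\|s_1^\perp\|$ times a standard Ginibre independent of $R$, and $T = aR + bS + T^\perp$, where $T^\perp$ is $\|s_0^\perp\|$ times a standard Ginibre independent of $(R, S^\perp)$. Substituting and completing the product yields the key identity
\[
\Pol - z = (R + b\,\id_N)(\bar p\, R + a\,\id_N + S^\perp) + T^\perp + (\shift - z - ab)\,\id_N.
\]

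From here I split into three cases according to which of $s_0^\perp, s_1^\perp$ vanish. If $s_0^\perp \neq 0$, then conditioning on everything else $\Pol - z$ equals a deterministic matrix plus the scaled Ginibre $T^\perp$, and Lemma \ref{lem:smin} yields $\pr(\sigma_{\min}(\Pol - z) \leq \eps) \ls_\pol N^3 \eps^2$. If $s_0^\perp = 0$ but $s_1^\perp \neq 0$, condition on $R$; the identity becomes $(R+b\,\id_N)\bigl[\bar p R + a\,\id_N + (\shift - z - ab)(R+b\,\id_N)^{-1} + S^\perp\bigr]$, and I apply Lemma \ref{lem:smin} once to bound $\sigma_{\min}(R + b\,\id_N) \geq \delta$ and once, conditionally on $R$, to the shifted Ginibre $S^\perp$; optimizing the threshold $\delta$ gives $\pr \ls_\pol N^3 \eps$. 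If $s_0^\perp = s_1^\perp = 0$, then $s_1 = p\,r_1$ with $p \neq 0$ and $\Pol - z = \bar p R^2 + (a + b\bar p) R + (\shift - z)\,\id_N$ is a matrix polynomial of degree two in the single Ginibre $R$ with scalar coefficients; since scalars commute with $R$ it factors as $\bar p(R - r_+\id_N)(R - r_-\id_N)$ for scalar roots $r_\pm$, and two applications of Lemma \ref{lem:smin} (split at $\sqrt{\eps/|p|}$) again give $\pr \ls_\pol N^3 \eps$.

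All three bounds are stronger than what Theorem \ref{thm:pseudo} requires (with $C_0 = 3$, $c = 1$), so the lemma follows. The main bookkeeping issue is tracking Hermitian conjugation in the orthogonal decomposition; the non-commutativity of the matrix factorization causes no trouble because the relevant shifts are all scalar multiples of $\id_N$ and hence commute with $R$. Importantly, none of the elaborate linearization, column-projection, Carbery--Wright, or structured-basis machinery outlined in \Cref{sec:outline} is needed in the rank-one case---rank-one quadratics are tame enough that repeated use of Lemma \ref{lem:smin} suffices.
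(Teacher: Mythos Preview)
Your proposal is correct and follows essentially the same strategy as the paper: exploit the rank-one product structure, split into cases according to which independent Ginibre directions survive in the linear and quadratic parts, and apply \Cref{lem:smin} once or twice in each case. Your coordinate-free presentation via the orthogonal decompositions $s_1 = p\,r_1 + s_1^\perp$ and $s_0 = \bar a\,r_1 + \bar b\,s_1 + s_0^\perp$ is a clean way to organize the case split; the paper instead performs an explicit unitary change of variables to reduce to $\Pol = X_1(\alpha_1 X_1 + \alpha_2 X_2) + \sum_{k\le 3}\beta_k X_k + \gamma$, but the resulting three cases (fresh Ginibre in the linear part; no such, but a second direction in the quadratic part; everything collinear) and the bounds obtained are the same as yours. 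The only substantive difference is the intermediate factorization in your Case~2: you factor as $(R+b\,\id_N)\,[\,\cdots + S^\perp\,]$ and use $S^\perp$ as the fresh Ginibre, whereas the paper writes $(\alpha_2 X_1 + \beta_2)X_2 + (\text{terms in }X_1)$ and uses $X_2$; both routes land on two applications of \Cref{lem:smin} and the same $O_\pol(N^3\eps)$ bound.
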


\begin{proof}
With a unitary change of basis we can reduce to a matrix of the form 
\[
P =  X_1(\alpha_1X_1+\alpha_2X_2) + \sum_{k=1}^3 \beta_kX_k + \gamma .
\]
If $\beta_3\ne0$ then, conditioning on $X_1,X_2$, we have from \Cref{lem:smin} that 
\[
\pr\{ \smin(P)\le \eps\} = \pr\{ \smin(X_3 + M) \le \eps/|\beta_3|\} \ls (\eps/|\beta_3|)^2N^3 = O_\pol(N^3\eps^2),
\]
where $M$ is a deterministic shift depending on $X_1,X_2$.
Having obtained the claim in this case, we henceforth assume that $\beta_3=0$.
Now write
\[
P = (\alpha_2 X_1 + \beta_2  ) X_2 + (\alpha_1X_1^2 + \beta_1X_1 + \gamma).
\]
If $\alpha_2$ or $\beta_2$ is nonzero then, conditioning on $X_1$, we have
\begin{align*}
\pr\{ \smin(P)\le \eps \} 
&\le \pr \{ \smin(\alpha_2X_1+\beta_2)\le \delta\} 
+ \pr\{ \smin( X_2 + M' ) \le \eps/\delta\}
\end{align*}
where $M' = (\alpha_2X_1 + \beta_2)^{-1} (\alpha_1X_1^2 + \beta_1X_1 + \gamma)$ is a deterministic shift depending on $X_1$ (note that $M'$ is well defined off a null event since $\alpha_2X_1+\beta_2$ is almost-surely invertible). 
Taking $\delta = \min\{ \sqrt{\eps}, |\beta_2|/2\}$, the claim again follows from \Cref{lem:smin}. 

Finally, assuming $\beta_3=\beta_2=\alpha_2=0$, we have
\[
P = \alpha_1X_1^2 + \beta_1X_1 + \gamma .
\]
If $\alpha_1=0$ we can conclude along the same lines as in previous cases. Otherwise we can factorize 
$
P = \alpha_1 (X_1 - a_+)(X_1-a_-) 
$
for some $a_\pm\in \C$.
Then since $\smin((X_1 - a_+)(X_1-a_-)) \ge \smin(X_1-a_+)\smin(X_1-a_-)$ we have
\begin{align*}
\pr\{\smin(P)\le \eps\} 
&\le \pr\{ \smin(X_1-a_+) \le (\eps/|\alpha_1|)^{1/2}\} + \pr\{ \smin(X_1-a_-) \le (\eps/|\alpha_1|)^{1/2}\}
\ls_\pol \eps N^3
\end{align*}
and the lemma is proved.
\end{proof}

\section{Reduction to a bounded-dimensional test projection}
\label{sec:bdd}

In view of \Cref{cor:linearize}, we henceforth write
\begin{equation}	\label{def:Linz2}
\Linz(\xx) = \Lin(\xx) + \Kez\otimes 1_\cA, \quad\text{with}\quad
\Lin(\xx):= 
\begin{pmatrix}
\langle \slin_0, \bx\rangle &x_1 & \cdots & x_\rnk \\
\langle\slin_1,\bx\rangle & 0 & \cdots & 0 \\
\vdots & \vdots & \ddots & \vdots \\
\langle\slin_\rnk,\bx\rangle & 0 &\cdots & 0 \\
\end{pmatrix}
\end{equation}
(i.e. taking $r_k$ to be the $k$-th standard basis vector in $\C^n$ for each $k\in [\rnk]$), so that $\LL= \Lin(\XX), \LLz= \Linz(\XX)$ are as in \Cref{cor:linearize}.
Our aim is to establish a lower tail bound for $\sigma_{\min}(\LLz)$. 

Recall our notational conventions from \Cref{sec:matrices}.
We view $\LL,\LLz$ as elements of $\matt{\DD\times[N]}{\DD\times[N]}{\C}\cong\Mat_{\DD}(\Mat_N(\C))$ (note that the first row and column have index $0$). 
Thus, $\LL$ has entries
\[
\LL((k,i),(l,j)) = \Le^{k,l}(i,j) = \Le_{i,j}(k,l) = \tLL((i,k),(j,l)), \quad k,l\in \DD,\; i,j\in [N]
\]
with
\[
\Le_{i,j} = \Lin (X_1(i,j), \dots, X_n(i,j))\in \Mat_\DD(\C),
\]
and similarly $\LLz((k,i),(l,j)) = \Lez_{i,j}(k,l)$, with
\[
\Lez_{i,j} = \Linz(X_1(i,j),\dots, X_\num(i,j)) = \Le_{i,j} + \Kez\delta_{i,j}.
\]
Note that as an element of $\Mat_N(\Mat_\DD(\C))$, 
$
\tLLz= (\Lez_{i,j})_{i,j\in [N]}
$
has independent entries, and  $\tLL = (\Le_{i,j})_{i,j\in [N]}$ has entries that are i.i.d.\ and centered:
\[
\Le_{i,j} = \Lez_{i,j} - \E \Lez_{i,j} . 
\]
We stress that the $(\dd)^2$ entries of $\Le_{i,j}$ in general are \emph{not} independent.

For $1\le j\le N$ we write $\LLz_{(j)}\in \Mat_{N,N-1}(\Mat_{[0,\rnk]}(\C))$ (resp.\ $\LL_{(j)}$) for the submatrix of $\LLz$ (resp.\ $\LL$) obtained by removing the $j$th column from each $N\times N$ submatrix. We let $\subsV^z_{(j)} \subset \C^{\DD\times[N]}$ denote the span of the columns of $\LLz_{(j)}$, viewed as an element of $\matt{\DD\times [N]}{\DD\times[N-1]}{\C}$, that is
\begin{equation}\label{Vj}
\subsV^z_{(j)}:= \Span \{ \col_{(l,j')}(\LLz) : j'\in [N]\setminus \{j\},\, l\in \DD\}.
\end{equation}
For each $j$, conditional on these $(N-1)(\dd)$ columns, we draw a matrix
$\UU_j \in \matt{\DD\times[N]}{\DD}{\C}$ with columns that are orthonormal in $(\subsV^z_{(j)})^\perp$, with $\UU_j$ independent of $\col_j(\tLLz) = (\Lez_{i,j})_{i=1}^N$. 
(To be more precise, we can for instance draw $N$ i.i.d.\ Haar unitaries $H_j\in \cU(\dd)$, independent of $(X_1,\dots, X_\num)$, and fixing arbitrary matrices $\bs{V}_j$ with $\dd$ orthonormal columns in $\subsV^z_{(j)}$, chosen measurably with respect to the sigma algebra generated by $\{\Le_{i,j'}\}_{i\in [N], j' \in [N]\setminus \{j\}}$, we set $\UU_j = \bs{V}_jH_j$.)
Recalling our notational conventions from \Cref{sec:matrices}, we have that $\UU_j$ is naturally associated to $\tUU_j\in \Mat_{[0,\rnk]}(\C)^N$, an $N\times 1$ matrix of $(\rnk+1)\times (\rnk+1)$ blocks, which we denote by $U_{1,j},\dots, U_{N,j}$.

In the remainder of this section we establish the following:

\begin{lemma}[Reduction to invertibility of a small test projection]
\label{lem:bdd}
With notation as above, for $j\in [N]$ denote
\[
\hLz_j := 
\tUU_j^* \col_j(\tLLz) = \sum_{i=1}^N U_{i,j}^*\Lez_{i,j} \in \Mat_\DD(\C).
\]
Then for any $\eps>0$,
\[
\P\{ \sigma_{\min}( \LLz) \le \eps\} \le N\,\P\big\{\, |\det( \hLz_1)| \le \eps(|z|+C_\pol)^{\rnk} \sqrt{N} \,\big\} + e^{-N}
\]
for some $C_\pol<\infty$ depending only on $\pol$. 
\end{lemma}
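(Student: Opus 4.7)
The plan is to follow the test-vector-and-projection strategy sketched in Step 2 of the outline (\Cref{sec:outline}), with the usual trick of going from $\sigma_{\min}$ to $|\det|$ by combining with a high-probability operator norm bound so that the stated $N\eps(|z|+C_\pol)^\rnk\sqrt N$ form comes out.

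First I would introduce the good event $\mathcal{G} := \{\|\LLz\|_\op \le |z|+C_\pol\}$. Writing $Y_k = \sum_{\ell=1}^\num \overline{\slin_{k,\ell}}X_\ell$ and using the triangle inequality and \Cref{lem:norm} on each Ginibre matrix $X_\ell$, together with $\|K^z\otimes\id_N\|_\op\le |z|+1$, one obtains $\P(\mathcal{G}^c)\lesssim e^{-N}$ for a suitable $C_\pol$ depending only on the $\slin_k$ and $\shift$. So it suffices to prove
\[
\P\bigl(\{\sigma_{\min}(\LLz)\le\eps\}\cap\mathcal{G}\bigr)\le N\,\P\bigl\{|\det(\hLz_1)|\le \eps(|z|+C_\pol)^{\rnk}\sqrt N\bigr\}.
\]

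Next, suppose $\sigma_{\min}(\LLz)\le \eps$ and pick a unit vector $\bv\in\C^{\DD\times[N]}$ with $\|\LLz\bv\|_2\le\eps$. View $\bv$ as an $N$-tuple of blocks $v_j\in\C^{\DD}$; since $\|\bv\|_2=1$, there is $j_0\in[N]$ with $\|v_{j_0}\|_2\ge 1/\sqrt N$. By construction the columns of $\UU_{j_0}$ are orthogonal to every column of $\LLz_{(j_0)}$, so $\tUU_{j_0}^*\LLz\bv = \tUU_{j_0}^*\col_{j_0}(\tLLz)v_{j_0} = \hLz_{j_0}v_{j_0}$. Since $\tUU_{j_0}^*$ is a partial isometry,
\[
\eps\ge \|\tUU_{j_0}^*\LLz\bv\|_2 = \|\hLz_{j_0}v_{j_0}\|_2 \ge \frac{1}{\sqrt N}\,\sigma_{\min}(\hLz_{j_0}),
\]
so $\sigma_{\min}(\hLz_{j_0})\le \eps\sqrt N$. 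On $\mathcal{G}$, moreover, $\|\hLz_{j_0}\|_\op\le\|\tUU_{j_0}^*\|_\op\|\LLz\|_\op\le |z|+C_\pol$, and since $\hLz_{j_0}$ is a $\dd\times\dd$ matrix,
\[
|\det(\hLz_{j_0})| = \prod_{k=1}^{\dd}\sigma_k(\hLz_{j_0}) \le \sigma_{\min}(\hLz_{j_0})\,\|\hLz_{j_0}\|_\op^{\,\rnk}\le \eps(|z|+C_\pol)^{\rnk}\sqrt N.
\]

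The final step is a union bound over $j_0\in[N]$ together with an exchangeability argument to reduce to $j_0=1$: the joint law of $(X_1,\dots,X_\num)$ is invariant under simultaneous permutation of the $N$ coordinates, and both $\hLz_j$ and the norm event $\mathcal G$ are equivariant under such a permutation (the definition of $\UU_j$ via its role as a basis of $(\subsV^z_{(j)})^\perp$ is permutation-equivariant once one uses the same i.i.d.\ Haar randomness across $j$ as described in the construction of $\UU_j$), so $\P\{|\det(\hLz_{j_0})|\le t\}$ is the same for every $j_0$. Combining with the estimate on $\mathcal G^c$ yields the claim.

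I do not anticipate a genuinely hard step here; this is a bookkeeping lemma. The only mildly delicate point is ensuring the exchangeability step is clean — i.e.\ that the specific measurable choice of $\UU_j$ (and the auxiliary Haar randomness) makes $\hLz_j$ genuinely exchangeable in $j$ — but this is ensured by the symmetric construction of $\UU_j$ spelled out before the statement. Tracking the factor $(|z|+C_\pol)^{\rnk}\sqrt N$ is just the determinant-vs-smallest-singular-value conversion together with the $1/\sqrt N$ loss from pigeonholing the large block of $\bv$.
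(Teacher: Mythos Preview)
Your proposal is correct and follows essentially the same argument as the paper: both introduce the high-probability norm event via \Cref{lem:norm}, pigeonhole on a large block $v_{j_0}$ of an approximate null vector, project via $\UU_{j_0}$ to obtain $\smin(\hLz_{j_0})\le\eps\sqrt N$, convert to a determinant bound using the operator-norm control, and finish with a union bound plus permutation symmetry.
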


\begin{proof}
On the event that $\smin(\LLz)\le \eps$ we have that there exists a unit vector $\bv=(v_1,\dots, v_N)\in \C^{[N]\times[0,\rnk]}$ with each $v_j\in \C^{[0,\rnk]}$ such that
\[
\eps \ge \|\LLz\bv \|_2 = \Big\| \sum_{j=1}^N \col_j(\tLLz) v_j \Big\|_2. 
\]
Since $\bv$ is a unit vector we must have $\|v_{j_0}\|_2\ge 1/\sqrt{N}$ for some $j_0\in [N]$. Since the norm of $\LLz\bv$ can only decrease under projection to the column span of $\UU_{j_0}$, we have
\[
\eps \ge \big\| \tUU_{j_0}^* \col_{j}(\tLLz) v_j \big\|_2 = \| \hLz_{j_0} v_{j_0}\|_2 \ge \frac1{\sqrt{N}} \smin( \hLz_{j_0}).
\]
Now note that
\[
\|\hLz_{j_0}\|_\op \le \|\LLz\|_\op \le \|\LL\|_\op + |z|+O_\pol(1).
\]
From \Cref{lem:norm} and the triangle inequality,
\[
\pr\{ \|\LL\|_\op > \bound\}  \le e^{-N}
\]
for some $\bound = O_\pol(1)$. Thus, by the union bound we have for some $\bound'=O_\pol(1)$, 
\begin{align*}
\pr\{ \smin(\LLz)\le \eps\} 
&\le e^{-N} + \sum_{j=1}^N \pr\{ \smin(\hLz_j)\le \eps\sqrt{N},\; \|\hLz_j\|_\op \le \bound' + |z|\}\\
&\le e^{-N} + \sum_{j=1}^N \pr\{ |\det(\hLz_j)| \le (\bound'+|z|)^{\rnk} \eps \sqrt{N}\}.
\end{align*}
Since the distribution of $\tLLz= (\Lez_{i,j})_{i,j\in [N]}$ is invariant under simultaneous permutation of the $N$ row and $N$ column indices, the claim follows.
\end{proof}

\section{Anti-concentration for matrix random walks}
\label{sec:unstruct}

From \Cref{lem:bdd} we see that it suffices to control the lower tail for the determinant of the $(\dd)$-dimensional matrix $\hLz_1$, which we may alternatively express as follows:
\begin{equation}		\label{hLz1.walk}
\hLz_1 = \tUU_1^* \col_1(\tLLz) = \sum_{i=1}^N U_{i,1}^* \Lez_{i,1}  \in \Mat_\DD(\C).
\end{equation}
The advantage of this perspective is that for a fixed realization of $\UU_1$ (which we recall is independent of $\col_1(\tLLz)$), the summands $U_{i,1}^*\Lez_{i,1}$ are $N$ independent random matrices.
We can thus view the matrix \eqref{hLz1.walk} as a random walk in $\Mat_\DD(\C)$. 

In this section we consider an arbitrary \emph{fixed} (deterministic) matrix 
\[
\UU\in \matt{\DD\times[N]}{\DD}{\C} \cong \Mat_N(\C)^\DD
\]
with orthonormal columns in $\C^{\DD\times[N]}$, identified with the sequence of its block submatrices as $\UU=(U^0,\dots, U^\rnk)\in \Mat_N(\C)^\DD$ and with the sequence of its square submatrices denoted $\tUU = (U_1,\dots, U_N)\in\Mat_\DD(\C)^N$. 
Thus, denoting the rows of $U_i$ by $(v_i^k)^*$, $k\in \DD$, we have that $(v_i^k)^*$ is the $i$th row of $U^k$.

Let $(g_i^k)_{i\in [N], k\in [\num]}, (h_i^k)_{i\in [N], k\in [\num]} \in\C^{[N]\times[\num]}$ be independent arrays of $N\num$ i.i.d.\ standard real Gaussians and denote $g_i= (g_i^1,\dots, g_i^\num)$, $g^k = (g_i^k)_{i\in [N]}$, and similarly for $h_i, h^k$.
We set $\xi_i^k = (2N)^{-1/2}(g_i^k+\ii h_i^k)$, $\xi_i = (\xi_i^1,\dots, \xi_i^\num)$, $\xi^k=(2N)^{-1/2}(g^k+\ii h^k)$. 
Then $\hLz_1$ in \eqref{hLz1.walk} is identically distributed to the random matrix
\begin{equation}	\label{def:W}
W= M + \sum_{i=1}^N U_i^*\Le_i  
\end{equation}
where
\begin{equation}	\label{def:M0}
M= M(\UU,z,\shift) := U_1^*\Kez \in \Mat_\DD(\C)
\end{equation}
is deterministic
and 
\begin{equation}	\label{def:Li}
\Le_i := \Lin(\xi_i^1,\dots, \xi_i^\num) = 
\begin{pmatrix}	
\langle\slin_0,\xi_i\rangle &  \xi_i^1 & \cdots & \xi_i^\rnk \\
\langle\slin_1,\xi_i\rangle & 0 & \cdots & 0 \\
\vdots & \vdots & \ddots & \vdots \\
\langle \slin_\rnk,\xi_i\rangle & 0 &\cdots & 0 \\
\end{pmatrix}
\in \Mat_\DD(\C)
\end{equation}
are i.i.d.,
with
$\shift,\slin_0,\dots, \slin_\rnk$ as in \Cref{cor:linearize}.

From \Cref{lem:bdd} we see that in order to prove \Cref{thm:pseudo}, it suffices to prove an anti-concentration estimate for $\det(W)$, a degree-$(\dd)$ polynomial in the $N\num$ Gaussian variables $g_i^k$,
of the form
\begin{equation}	\label{ACdet.goal}
\P\{ |\det(W)|\le \eps \} \le N^C\eps^c.
\end{equation}
In this section we identify sufficient structural conditions on the matrix $\UU$ in order to have \eqref{ACdet.goal}.
In \Cref{sec:struct} we will show that such conditions hold with high probability for the matrix $\UU_1$.

For $1\le \ell\le \num$ let
\begin{equation}	\label{def:Qell}
Q^\ell = Q^\ell(\UU) =  \sum_{k=0}^{\rnk} \slin_{k,\ell} U^k \in \matt{[N]}{\DD}{\C}
\end{equation}
and for $i\in [N]$ denote the $i$th row of $Q^\ell$ by $(w_i^\ell)^*$; thus,
\begin{equation}	\label{def:ws}
w_i^\ell =  \sum_{k=0}^\rnk \overline{\slin_{k,\ell}}v_i^k.
\end{equation}
For $\ell\in [\num]$ and $i_0,i_1,\dots, i_{\rnk}\in [N]$ denote
\begin{equation}	\label{def:Delta}
\Delta^\ell_{i_0,i_1,\dots, i_\rnk} = \Delta^\ell_{i_0,i_1,\dots, i_\rnk}(\UU) := \det( w_{i_0}^\ell, v_{i_1}^0, \dots, v_{i_\rnk}^0 ). 
\end{equation}
For a parameter $\pstr>0$ we define sets of \emph{structured matrix-columns}:
\begin{align}
\Struct_1(\pstr) & := 
\bigcap_{i_0,i_1,\dots, i_\rnk\in [N]}\bigcap_{\ell\in [\rnk+1,\num] }\Big\{ \UU\in \matt{[0,\rnk]\times[N]}{[0,\rnk]}{\C} :
 |\Delta^\ell_{i_0,i_1,\dots, i_\rnk}|<\pstr 
 \Big\} \,,	\label{def:struct1} \\
\Struct_2(\pstr) & :=\bigcap_{ i_1,\dots, i_\rnk\in [N]}\bigcap_{\ell\in [\rnk]} \Big\{ \UU\in \matt{[0,\rnk]\times[N]}{[0,\rnk]}{\C} :
 |\Delta^\ell_{i_\ell,i_1,\dots, i_\rnk}|<\pstr	
  \Big\}\,,
\label{def:struct2} \\
\Struct(\pstr) &:= \Struct_1(\pstr)\cap \Struct_2(\pstr).	\label{def:struct}
\end{align}

\begin{lemma}[Anti-concentration for the determinant of a matrix random walk]
\label{lem:anti-unstruct}
Let $\UU=(U_1,\dots,U_N)\in \matt{[0,\rnk]\times [N]}{ [0,\rnk]}{\C}$ and suppose $\UU\notin \Struct(\pstr)$ for some $\pstr>0$.
Then for any $\eps>0$, 
\begin{equation}	\label{unstruct.goal}
\sup_{M\in \Mat_{\DD}(\C)} \P \bigg\{ \bigg| \det \bigg( M + \sum_{i=1}^N U_i^* \Le_i \bigg) \bigg| \le \eps \bigg\}
 \ls_\rnk
N^{ 1/2 }  (\eps/\delta)^{1/(\rnk+1)}.
\end{equation}
\end{lemma}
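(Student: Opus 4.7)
The plan is to view $\det W$ as a polynomial of degree $\rnk+1$ in the complex Gaussian vector $(\xi_i^k)_{i\in[N],\,k\in[\num]}$ and apply polynomial anti-concentration (the Carbery--Wright-based Lemma \ref{lem:anti-poly}), with the hypothesis $\UU\notin\Struct(\delta)$ producing a top-degree monomial of coefficient magnitude at least $\delta$. By multilinearity in columns and the identity $\sum_{k=0}^\rnk\langle\slin_k,\xi_i\rangle v_i^k=\sum_{\ell=1}^\num \xi_i^\ell w_i^\ell$, column $0$ of $W$ takes the form $\col_0(M)+\sum_{\ell,i}\xi_i^\ell w_i^\ell$ and column $k\ge 1$ the form $\col_k(M)+\sum_i \xi_i^k v_i^0$. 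Each column contributes at most one $\xi$ factor, so $\det W$ has total degree $\rnk+1$, and any monomial of full degree must pick up a random term from every column.

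Next I would isolate a distinguished monomial with coefficient of magnitude $\ge\delta$ in two cases. If $\UU\notin\Struct_1(\delta)$, pick $\ell\in[\rnk+1,\num]$ and $i_0,\dots,i_\rnk$ with $|\Delta^\ell_{i_0,\dots,i_\rnk}|\ge\delta$ and target $\xi_{i_0}^\ell\prod_{k=1}^\rnk\xi_{i_k}^k$: because $\ell>\rnk$ the variable $\xi_{i_0}^\ell$ appears only in column $0$, so this monomial is produced uniquely by choosing $\xi_{i_0}^\ell w_{i_0}^\ell$ in column $0$ and $\xi_{i_k}^k v_{i_k}^0$ in column $k\ge 1$, with coefficient $\det(w_{i_0}^\ell,v_{i_1}^0,\dots,v_{i_\rnk}^0)=\Delta^\ell_{i_0,\dots,i_\rnk}$. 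If instead $\UU\notin\Struct_2(\delta)$, pick $\ell\in[\rnk]$ and $i_1,\dots,i_\rnk$ with $|\Delta^\ell_{i_\ell,i_1,\dots,i_\rnk}|\ge\delta$ and target $(\xi_{i_\ell}^\ell)^2\prod_{k\in[\rnk]\setminus\{\ell\}}\xi_{i_k}^k$: since column $\ell$ is forced to contribute $\xi_{i_\ell}^\ell v_{i_\ell}^0$ (it is the unique source in columns $1,\dots,\rnk$ of a $\xi^\ell$ factor of the monomial) and column $k\ne \ell$, $k\ge 1$, is forced to contribute $\xi_{i_k}^k v_{i_k}^0$, the second $\xi_{i_\ell}^\ell$ must come from column $0$ as $\xi_{i_\ell}^\ell w_{i_\ell}^\ell$, again yielding coefficient $\Delta^\ell_{i_\ell,i_1,\dots,i_\rnk}$.

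Finally, since the $\xi_i^k$ have variance $N^{-1}$, rescaling to unit-variance Gaussians absorbs a factor $N^{-(\rnk+1)/2}$ into the $L^2$ norm of the polynomial; Lemma \ref{lem:anti-poly} (applied, for instance, to the real polynomial $|\det W|^2$ to handle complex-valuedness) then gives $\P\{|\det W|\le\eps\}\ls_\rnk N^{1/2}(\eps/\delta)^{1/(\rnk+1)}$, uniformly in the deterministic shift $M$, since $M$ contributes only to lower-degree terms of the polynomial and in particular does not perturb the coefficient of our witness monomial.

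The hard part is the uniqueness check in the $\Struct_2$ case: the target variable $\xi_{i_\ell}^\ell$ appears both in column $0$ (through $w_{i_\ell}^\ell$) and in column $\ell$ (through $v_{i_\ell}^0$), so one must rule out parasitic column assignments that would add cross-terms to the coefficient of the witness monomial. The split of $\Struct$ into $\Struct_1$ and $\Struct_2$ reflects precisely the dichotomy between ``redundant'' Gaussians ($\ell>\rnk$, which appear only in column $0$) and ``shared'' Gaussians ($\ell\in[\rnk]$, appearing in both column $0$ and column $\ell$), and the two cases must be treated via different monomial shapes.
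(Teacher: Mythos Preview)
Your proposal is correct and follows essentially the same route as the paper: expand $\det W$ by column multilinearity, identify a top-degree monomial whose coefficient is one of the $\Delta^\ell_{\cdot}$ of size at least $\delta$ (with the $\Struct_1/\Struct_2$ split handled via the two monomial shapes you describe, and the uniqueness check in the $\Struct_2$ case exactly as you argue), rescale from variance-$1/N$ to standard Gaussians picking up the factor $N^{-(\rnk+1)/2}$, and apply Lemma~\ref{lem:anti-poly}.

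The only point worth flagging is your handling of complex-valuedness. The paper conditions on the imaginary parts $\bh=(h_i^k)$ and views $D=\det W$ as a degree-$(\rnk+1)$ polynomial in the real Gaussians $\bg=(g_i^k)$: substituting $\xi_i^k=(2N)^{-1/2}(g_i^k+\ii h_i^k)$ and keeping the top degree in $\bg$ converts your witness $\xi$-monomial into a $\bg$-monomial with the same $\Delta^\ell$ coefficient (up to the $(2N)^{-(\rnk+1)/2}$ scaling), and Lemma~\ref{lem:anti-poly} is then applied to $\Re D$ or $\Im D$. Your parenthetical alternative of applying the lemma to $|\det W|^2$ is not complete as stated: that polynomial has degree $2(\rnk+1)$ in the real variables $(g,h)$, and you would need to exhibit a degree-$2(\rnk+1)$ monomial with coefficient of order $\delta^2/N^{\rnk+1}$, which does not follow immediately from a single large $\xi$-monomial coefficient in $\det W$ (the square $p\bar p$ can suffer cancellation among cross-terms when passed to real monomials). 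The paper's conditioning trick avoids this issue entirely.
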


To establish the lemma we make use of the following easy corollary of the Carbery--Wright inequality for anti-concentration of polynomials in Gaussian variables.

\begin{lemma}
\label{lem:anti-poly}
Let $g_1,\cdots,g_n$ be i.i.d.\ standard gaussian variables and let $\scpol$ be a real-valued degree-$d$ polynomial. Let $a\ne0$ be the coefficient of one of the degree-$d$ monomials of $\scpol$.
Then for any $\eps>0$, 
\[
\sup_{t\in\mathbb R} \P\left( |\scpol(g_1,\ldots,g_n)-t|\le \eps\right) \,\ls_d\, (\eps/|a|)^{1/d}.
\]
\end{lemma}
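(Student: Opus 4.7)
The plan is to deduce this as an easy consequence of the Carbery--Wright anti-concentration inequality, which asserts that for any real degree-$d$ polynomial $q$ on $\R^n$ and any standard Gaussian vector $g = (g_1, \dots, g_n)$,
\[
\P\big(|q(g)| \le \eps\big) \ls_d \big(\eps/\|q\|_{L^2(\gamma)}\big)^{1/d},
\]
where $\gamma$ denotes the standard Gaussian measure. First I would apply this to $q := \scpol - t$, which is still a degree-$d$ polynomial in $g$, to obtain
\[
\sup_{t \in \R} \P\big(|\scpol(g) - t| \le \eps\big) \ls_d \Big(\eps\big/\inf_{t \in \R}\|\scpol - t\|_{L^2(\gamma)}\Big)^{1/d}.
\]
This reduces the problem to a uniform-in-$t$ lower bound on $\|\scpol - t\|_{L^2(\gamma)}$ by a constant multiple of $|a|$.

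To eliminate the $t$-dependence, I would write $\|\scpol - t\|_{L^2(\gamma)}^2 = \Var(\scpol(g)) + (\E \scpol(g) - t)^2 \ge \Var(\scpol(g))$, so it suffices to show $\Var(\scpol(g)) \ge a^2$. Expanding $\scpol$ in the multivariate (probabilist's) Hermite basis $\mathrm{He}_\alpha(g) = \prod_k \mathrm{He}_{\alpha_k}(g_k)$, whose factors satisfy $\mathrm{He}_m(x) = x^m + (\text{lower order})$ together with $\E[\mathrm{He}_\alpha(g)\mathrm{He}_\beta(g)] = \alpha!$ if $\alpha = \beta$ and $0$ otherwise, I would write $\scpol(g) = \sum_\alpha c_\alpha \mathrm{He}_\alpha(g)$, so that orthogonality gives
\[
\Var(\scpol(g)) = \sum_{|\alpha| \ge 1} c_\alpha^2\, \alpha!.
\]

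The crux is to identify the Hermite coefficient at the distinguished top-degree monomial. Let $\alpha^\star$ be a multi-index with $|\alpha^\star| = d$ for which the coefficient of $g^{\alpha^\star}$ in the monomial expansion of $\scpol$ equals $a \ne 0$. Every monomial appearing in $\mathrm{He}_\beta$ has the form $g^\delta$ with $\delta_k \le \beta_k$ coordinatewise and $|\delta| \le |\beta|$, with equality only when $\delta = \beta$; hence no $\mathrm{He}_\beta$ with $\beta \ne \alpha^\star$ and $|\beta| \le d$ can contribute to the monomial $g^{\alpha^\star}$ (if $|\beta| < d$, by total-degree reasons; if $|\beta| = d$, because the unique top-degree monomial of $\mathrm{He}_\beta$ is $g^\beta$). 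Matching coefficients of $g^{\alpha^\star}$ on both sides of the Hermite expansion therefore forces $c_{\alpha^\star} = a$, so
\[
\Var(\scpol(g)) \ge c_{\alpha^\star}^2\, \alpha^\star! \ge a^2,
\]
and combining with Carbery--Wright yields the stated bound. The only step requiring verification is the identification $c_{\alpha^\star} = a$, a routine consequence of the triangular change of basis between monomials and Hermite polynomials; I do not anticipate any genuine obstacle.
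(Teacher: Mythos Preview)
Your proposal is correct and follows essentially the same approach as the paper: apply Carbery--Wright and reduce to a lower bound on $\Var(\scpol(g))$ via the Hermite expansion. If anything, your argument is more explicit than the paper's, which simply asserts that ``one of the coefficients among the highest degree terms in the expansion must be of size $\gs_d 1$''; you pin down the exact identity $c_{\alpha^\star}=a$ via the triangular change of basis, yielding the clean bound $\Var(\scpol(g))\ge a^2$ with no $d$-dependent constant in the variance step.
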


\begin{proof}
By rescaling $\scpol$ and $\eps$ by $a$ we may assume $a=1$. 
From the Carbery--Wright inequality \cite{CW} we have
\[
\sup_{t\in\mathbb R} \P\left( |\scpol(g_1,\ldots,g_n)-t|\le \eps\sigma \right) \,\ls_d\, \eps^{1/d}
\]
where $\sigma^2 = \Var(\scpol(g_1,\ldots,g_n))$ is the variance of $p$ under the product Gaussian measure. It hence suffices to verify that
\[
\Var(\scpol(g_1,\ldots,g_n))\,\gs_d\, 1.
\]
One readily notes the above indeed holds by, for instance, expanding $p$ in the orthonormal basis of Hermite polynomials and noting our assumption implies that one of the coefficients among the highest degree terms in the expansion must be of size $\gs_d1$. The claim follows.
\end{proof}

\begin{proof}[Proof of \Cref{lem:anti-unstruct}]
Write
\begin{equation}
D = \det \hL, \qquad \hL := M+ \sum_{i=1}^N U_i^*L_i.
\end{equation}
Denoting the columns of $M$ by $y^0,\dots, y^\rnk$, we have
\begin{align}
D &= \det \left( y^0 + \sum_{i_0=1}^N \sum_{k=0}^\rnk \langle \slin_k, \xi_{i_0}\rangle v_{i_0}^k \, ,\, y^1 + \sum_{i_1=1}^N \xi_{i_1}^1 v_{i_1}^0, \dots, y^{\rnk} + \sum_{i_\rnk=1}^N \xi_{i_\rnk}^\rnk v_{i_\rnk}^0 \right)	\notag\\
& = \det \left( y^0 + \sum_{i_0=1}^N \sum_{\ell=1}^\num \xi_{i_0}^\ell w_{i_0}^\ell \, ,\, y^1 + \sum_{i_1=1}^N \xi_{i_1}^1 v_{i_1}^0, \dots, y^{\rnk} + \sum_{i_\rnk=1}^N \xi_{i_\rnk}^\rnk v_{i_\rnk}^0 \right) .\label{det.expand}
\end{align}
Now we condition on the variables $\bh=(h_i^k)_{i\in [N], k\in [\num]}$, the imaginary parts of $\sqrt{2N}{\bxi}$,  and view $D$ as a degree-$(\rnk+1)$ polynomial in the $\num N$ i.i.d.\ standard real Gaussian variables $\bg = (g_i^k)$, the real parts of $\sqrt{2N}{\bxi}$.
Write $\wh{D}$ for the homogeneous degree-$(\rnk+1)$ part of this polynomial. We have
\begin{align}
\wh{D} &= (2N)^{-(\rnk+1)/2} \sum_{\ell=1}^\num \sum_{i_0,\dots, i_\rnk=1}^N g_{i_0}^\ell g_{i_1}^1\cdots g_{i_\rnk}^\rnk \cdot \det(w_{i_0}^\ell, v_{i_1}^0,\dots, v_{i_\rnk}^0 ) \\
&= (2N)^{-(\rnk+1)/2} \sum_{\ell=1}^\num \sum_{i_0,\dots, i_\rnk=1}^N g_{i_0}^\ell g_{i_1}^1\cdots g_{i_\rnk}^\rnk \cdot \Delta^\ell_{i_0,\dots, i_\rnk}.
\end{align}
In particular, for $\ell\le \rnk$, the coefficient of the monomial $g_{i_\ell}^\ell g_{i_1}^1\cdots g_{i_\rnk}^\rnk$ is 
\begin{equation}	\label{coeffA}
2\cdot (2N)^{-(\rnk+1)/2} \Delta^\ell_{i_\ell, i_1,\dots, i_\rnk}
\end{equation}
and for $\ell>\rnk$, the coefficient of the monomial $g_{i_0}^\ell g_{i_1}^1\cdots g_{i_\rnk}^\rnk$ is
\begin{equation}	\label{coeffB}
(2N)^{-(\rnk+1)/2} \Delta^\ell_{i_0, i_1,\dots, i_\rnk}.
\end{equation}
Since $\UU\notin\Struct(\pstr)$, at least one of these coefficients has modulus at least $ (2N)^{-(\rnk+1)/2}\delta$.
Suppose this holds  for a coefficient \eqref{coeffA} for some $\ell\le \rnk$ and $i_1,\dots, i_\rnk\in [N]$. Then either the real or imaginary part is of size at least $2^{-1/2}(2N)^{-(\rnk+1)/2}\delta$. 
Supposing further that this holds for the real part, then for the real Gaussian polynomial $\Re D$ there is a coefficient of a maximal degree monomial of size at least $2^{-1/2}(2N)^{-(\rnk+1)/2}\delta$. Applying \Cref{lem:anti-poly} to $\Re D$, we have
\[
\pr\{|D|\le \eps\} \le \pr\{ |\Re D|\le \eps\} \ls_\rnk N^{1/2} (\eps/\pstr)^{1/(\rnk+1)}
\]
yielding \eqref{unstruct.goal} in this case. For the case that the imaginary part of this coefficient is of size at least $2^{-1/2}(2N)^{-(\rnk+1)/2}\delta$ we argue similarly with the real Gaussian polynomial $\Im D$,
and we repeat the same reasoning for the case that a coefficient as in \eqref{coeffB} is large.
\end{proof}

\section{Ruling out structured bases}
\label{sec:struct}

Recall from \eqref{Vj} that $\subsV^z_{(1)}\subset \C^{\DD\times[N]}$ denotes the span of the $(\rnk+1)(N-1)$ columns of $\LLz_{(1)}$, with the latter viewed as an element of $\matt{\DD\times[N]}{\DD\times[N-1]}{\C}$, and that the columns of $\UU_1$ are a random orthonormal set in $(\subsV^z_{(1)})^\perp$. (In view of \Cref{thm:pseudo} we have, a posteriori, that with high probability $\LLz_{(1)}$ is full rank and the columns of $\UU_1$ in fact comprise a basis for $(\subsV^z_{(1)})^\perp$.)
The aim of this section is to establish the following:

\begin{prop}[Structured bases are rare]
\label{prop:struct}
Assume $\rnk\ge 2$.
Then, we have
\begin{equation}
\P\{ \UU_1 \in \Struct(N^{-\frac12\rnk -10}) \} \ls_{\pol, z} e^{- N}
\end{equation}
where the sets $\Struct(\pstr)$ were defined in \eqref{def:struct}.
\end{prop}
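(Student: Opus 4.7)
The plan is a covering (net) argument over the set of structured bases, combined with the defining property that $\UU_1^* \col_j(\tLLz) = 0$ for every $j \ne 1$ and the independence of the remaining $N-1$ columns $\col_j(\tLLz)$.  First I would restrict to the high-probability event $\|\tLLz\|_\op \le B$ for some $B = O_\pol(|z|+1)$, whose complement has probability at most $e^{-N}$ by \Cref{lem:norm} and the triangle inequality.  On this event, any $\eps$-approximation $\hUU$ of $\UU_1$ in Hilbert--Schmidt norm satisfies
\[
\|\hUU^* \col_j(\tLLz)\|_\HS \le B\eps \qquad \forall\, j \ne 1,
\]
so it suffices to show that no $\hUU$ in a suitable net for $\Struct(\pstr)$ can simultaneously satisfy these $N-1$ constraints, all involving independent Gaussian data.

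For fixed $\hUU$, I would unwind $\hUU^* \col_j(\tLLz)$ exactly as in \eqref{outline:WW}: packaging the Ginibre entries in column $j$ as a complex Gaussian vector $\bxi_j \in \C^{\num N}$, one obtains $\hUU^* \col_j(\tLLz) = \Flat(\hUU)^* \bxi_j + m_j$, where $m_j$ is a deterministic shift coming from $\Kez$ and $\Flat(\hUU) \in \Mat_{\num N,(\dd)^2}(\C)$ is a block matrix built from the blocks $\hU^0,\ldots,\hU^\rnk$ of $\hUU$ and the coefficient vectors $\slin_0,\ldots,\slin_\rnk$.  Standard Gaussian small-ball in $\C^{(\dd)^2}$ gives, uniformly in $m_j$,
\[
\P\{\|\hUU^* \col_j(\tLLz)\|_\HS \le B\eps\} \ls (B\eps/\pfine)^{2r}
\]
whenever $\Flat(\hUU)$ has at least $r$ singular values exceeding a threshold $\pfine$; independence across $j$ upgrades this to $(B\eps/\pfine)^{2r(N-1)}$ per net element.

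The third step is the geometric one: construct a net for $\Struct(\pstr)$ whose cardinality is substantially smaller than the naive bound $(C/\eps)^{2(\dd)^2 N}$ from \Cref{lem:net} on the ambient space of $\dd$-frames in $\C^{\dd N}$.  Here the definitions \eqref{def:struct1}--\eqref{def:struct2} are the key input: the near-vanishing of every $\Delta^\ell_{i_0,\ldots,i_\rnk}(\UU)$ means that, after fixing a generic $\rnk$-tuple $(v_{i_1}^0,\ldots,v_{i_\rnk}^0)$, every other relevant vector $w_{i_0}^\ell$ or $v_{i_\ell}^\ell$ lies within $O(\pstr)$ of the $\rnk$-dimensional span of that tuple.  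Each such approximate linear relation is a codimension-one constraint in $\C^{\dd}$, and collecting them across rows and across $\ell$ identifies $\Struct(\pstr)$, up to $O(\pstr)$ thickening, with a semialgebraic set of effective metric dimension $d$ per row strictly smaller than $\dd$, producing a net of cardinality $(C/\eps)^{2dN}$ on the appropriate stratum.

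The main obstacle -- and the reason stratification is unavoidable -- is that neither $r$ nor $d$ is uniform over $\Struct(\pstr)$: there are degenerate configurations in which $\Flat(\UU)$ loses rank for precisely the same structural reasons that also compress $\Struct(\pstr)$.  I would therefore partition $\Struct(\pstr)$ into finitely many strata indexed by the rank profile of $\Flat$ and by the pattern of (near-)linear relations among the rows of $\UU$, and on each stratum separately verify the strict inequality $r > d$.  With that in hand, balancing the net cardinality $(C/\eps)^{2dN}$ against the small-ball factor $(B\eps/\pfine)^{2r(N-1)}$ and taking $\pstr = N^{-\rnk/2 - 10}$ and $\eps$ a small polynomial power of $N$ yields a bound of order $N^{-2(r-d)cN + O(1)}$, which is comfortably smaller than $e^{-N}$ for any positive gap $r - d$.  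Executing this rank-versus-dimension comparison on every stratum -- and in particular ruling out degenerate strata on which $\Flat$ would be forced to be rank-deficient on every block -- is the technical heart of the section, and is where the standing assumption $\rnk \ge 2$ from \Cref{lem:rnk1} enters, through the form of the linearization \eqref{def:Linz2}.
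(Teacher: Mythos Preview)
Your proposal is essentially the paper's approach: the paper stratifies $\Struct(\pstr)$ into $\good(\pssv)^c$, $\good(\pssv)\cap\badR(\pflat)$, and $\cE=\Struct(\pstr)\cap\good(\pssv)\cap\badR(\pflat)^c$, handling the second by the iterative net-versus-rank argument you describe (\Cref{lem:fullrank}) and the third by an efficient net exploiting the determinant constraints (\Cref{lem:struct.net}), with anti-concentration supplied by \Cref{lem:mat-ac}. The one piece you may be underspecifying is the bottom stratum $\good(\pssv)^c$, where $\Flat$ can have arbitrarily low effective rank so no net-versus-rank trade-off is available; the paper instead shows (\Cref{lem:block0}) by a direct deterministic argument that on a high-probability event in $\LL_{(1)}$, \emph{every} orthonormal set in $(\subsV^z_{(1)})^\perp$ has $\smin(U^0)\gtrsim_{\pol,z}1$, so this stratum is simply empty.
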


We now conclude the proof of \Cref{thm:pseudo} assuming the above proposition. 
From \Cref{lem:rnk1} we may assume $\rnk\ge 2$. 
From \Cref{lem:norm}, the triangle inequality and sub-multiplicativity of the operator norm we have
$\|\Pol\|_\op = O_\pol(1)$. In particular we may assume $|z|=O_\pol(1)$ since otherwise we obtain the claim by simply lower bounding $\smin(\Pol-z) \ge |z|-\|\Pol\|_\op$. 
Now by \Cref{cor:linearize}, 
\[
\pr\{ \smin(\Pol-z) \le \eps\} \le \pr \{ \smin(\LLz) \le \eps\} 
\]
and from \Cref{lem:bdd} the latter is in turn bounded by
\[
N\pr\big\{\, |\det(\hL_1^z)|\le \eps(|z|+C_\pol)^\rnk\sqrt{N}\,\big\} + e^{-N}. 
\]
Finally, from \Cref{prop:struct}, \eqref{def:W} and \Cref{lem:anti-unstruct}, 
\begin{align*}
\pr\big\{\, |\det(\hL_1^z)|\le \eps(|z|+C_\pol)^\rnk\sqrt{N}\,\big\}
&\ls_\pol e^{- N} +  N^{\frac12( 1+ (\rnk+21)/(\rnk+1))}  (|z|+C_\pol)^{\rnk/(\rnk+1)} \eps^{1/(\rnk+1)} \\
& \ls_\pol e^{- N} + N^{13/3} \eps^{1/(\rnk+1)} ,
\end{align*}
where in the second line we bounded the exponent of $N$ by its maximum value at $\rnk=2$.
The claim follows by combining all of these estimates.
\qed

\subsection{High-level proof of \Cref{prop:struct}}
\label{sec:struct-high}

Our approach is to cut the set $\Struct(\delta)$ (for sufficiently small $\delta = N^{-O_{\pol}(1)}$) into several pieces, and to bound the event that $\UU_1$ lies in each piece by taking union bounds over nets. (Recall the definition of a net from \Cref{sec:facts}.) 
Once we have approximated $\UU_1$ by a some fixed element $\UU$ of a net, our task is then to bound the probability that the columns of $\UU$ are nearly contained in $(\subsV^z_{(1)})^\perp$, i.e.\ that $\tUU^*\col_j(\tLLz)\approx 0$ (the zero matrix in $\Mat_{[0,\rnk]}(\C)$) for each $2\le j\le N$. 
We have
\begin{align}
\tUU^* \col_j(\tLLz) = \sum_{i=1}^N U_{i}^* \Lez_{i,j} &= U_j^*\Kez + \sum_{i=1}^N U_i^*\Le_{i,j}   \notag\\
&=: M_j(\UU,z) + \Walk_j(\UU) \in \Mat_{[0,\rnk]}(\C)	\label{jwalk}
\end{align}
where the matrices $M_j$ are deterministic shifts and 
$\Walk_j(\UU)$ are i.i.d.\ copies of the centered random walk
\begin{equation}
\Walk(\UU) = \sum_{i=1}^N U_i^*\Le_{i}
\end{equation}
with 
\[
\Le_{i} = \Lin(\xi_i^1,\dots, \xi_i^\num) = \frac1{\sqrt{N}} \Lin(\zeta_i^1,\dots,\zeta_i^\num)
\]
as in \eqref{def:Li}, where $\zeta_i^k = \frac1{\sqrt{2}} (g_i^k + \ii h_i^k)$ are i.i.d.\ standard complex Gaussians.
Our task is thus reduced to proving an anti-concentration bound for $\Walk(\UU)$.
Note that whereas in \Cref{sec:unstruct} we were concerned with anti-concentration for the determinant of $W=M_1+\Walk(\UU)$, here we need to show the matrix $\Walk(\UU)$ is anti-concentrated as a random element of the vector space $\Mat_{[0,\rnk]}(\C)$.

Anti-concentration for $\Walk(\UU)$ is most transparent when viewing the matrix as a Gaussian vector in $\C^{(\rnk+1)^2}\cong \C^{[0,\rnk]^2}$. 
Indeed, 
we note from \eqref{def:Li} that $\Le_i$ is a Gaussian linear combination $\frac1{\sqrt{N}}\sum_{\ell=1}^\num \zeta_i^\ell M_\ell$ of matrices $M_\ell$, where $M_\ell^*$ has zeroth row $(\slin_0(\ell),\dots, \slin_\rnk(\ell))$, $\ell$th row $e_\ell$ (the $\ell$th standard basis vector) and all other rows equal to zero. Thus
\[
\Walk(\UU) = \frac1{\sqrt{N}}\sum_{(\ell,i)\in [\num]\times[N]} \zeta_i^\ell M_\ell^*U_i.
\]
Recalling \eqref{def:ws}, the matrix $M_\ell^*U_i$ has zeroth row $(w_i^\ell)^*$, $\ell$th row $(v_i^\ell)^*$, and all other rows equal to zero. 
We can hence express
\begin{equation}	\label{def:walkvec}
\Walk(\UU) = \frac1{\sqrt{N}}\Flat ^* \gvec \in \C^{[0,\rnk]^2} 
\end{equation}
where $\gvec= (\gstep_i^k)_{(k,i)\in [\num]\times[N]}$, and we introduce the \emph{walk matrix}
\begin{equation}	\label{def:flat}
\Flat(\UU) :=
\begin{pmatrix}
Q^1 & U^0 & 0 &\cdots & 0\\
Q^2 & 0 & U^0 & \cdots & 0\\
\vdots& \vdots & &\ddots & \vdots\\
Q^{\rnk}& 0& \cdots & 0 & U^0\\
Q^{\rnk+1}& 0& \cdots & 0 & 0\\
\vdots& \vdots & &\vdots&\vdots \\
Q^\num & 0& \cdots & 0 & 0
\end{pmatrix}
=: (\, \QQ( \UU) \; \RR (U^0) \,) 
\in \matt{[n]\times[N]}{ [0,\rnk]^2}{\C},
\end{equation}
recalling the matrices $Q^\ell$ from \eqref{def:Qell}. 
(See \eqref{outline:WW} in the proof outline for the form of the walk matrix in the case of the anti-commutator polynomial $\pol=x_1x_2+x_2x_1$.)
From \eqref{def:walkvec} we see that the probability the Gaussian vector $\Walk(\UU)$ lands in a Hilbert--Schmidt ball of radius $\eps$ scales like $O(\eps^2)^{\rank(\Flat)}$ for small $\eps$
(we will presently give a quantitative version of this statement).
In particular, from the form of \eqref{def:flat} we see that a lower bound $\smin(U^0)\gs 1$ already guarantees a small ball probability of order $O(\eps^2)^{\rnk(\rnk+1)}$. 
The next lemma shows we may assume such a bound holds.
For a parameter $\pssv >0$, we define the set of ``good'' matrices $\UU$ 
having a well-conditioned zeroth block:
\begin{align}
\good(\pssv ) &:= \{ \,\UU\in \matt{[0,\rnk]\times[N]}{[0,\rnk]}{\C}: \;\smin(U^0) \ge \pssv\,\}. \label{def:good}
\end{align}
Recall that $\LL_{(1)}$ is obtained by removing the first column from each of the $\rnk+1$ blocks of $N$ columns of $\LL$, and that $\UU_1$ depends on $\LL$ only through $\LL_{(1)}$.

\begin{lemma}
\label{lem:block0}
There is constant $c_\pol>0$ depending only on $\pol$ such that for any $\bound\ge 1$ and $z\in \C$, 
\begin{equation}\label{event.block0}
\P\big\{\, \UU_1\notin \good(\pssv )  \,\wedge\, \|\LL_{(1)}\|_\op \le \bound \,\big\} \ls_\pol e^{-N}
\end{equation}	
for any $\pssv <c_\pol/(\bound^2 + |z|)$.
\end{lemma}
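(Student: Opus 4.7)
The plan is to show that on the event $\{\UU_1\notin\good(\pssv)\}\cap\{\|\LL_{(1)}\|_\op\le\bound\}$, a particular deterministic $(N-1)\times\rnk$ matrix $Y$ built from Gaussian entries of $\LL_{(1)}$ has abnormally small least singular value---which occurs with probability $\ls_\pol e^{-N}$. The argument is a linear-algebraic contradiction exploiting the block structure of $\LLz$ from \Cref{cor:linearize}.

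First, if $\smin(U_1^0)<\pssv$ I pick a unit vector $c\in\C^\DD$ with $\|U_1^0 c\|_2<\pssv$ and set $u:=\UU_1 c$, which is a unit vector in $(\subsV^z_{(1)})^\perp\subset\C^{\DD\times[N]}$ whose $k=0$ block $u_0\in\C^N$ satisfies $\|u_0\|_2<\pssv$. For $l\in[\rnk]$ and $j\in[2,N]$, the column $\col_{(l,j)}(\LLz)$ has $k=0$ block equal to $\col_j(X_l)$, $k=l$ block equal to $-e_j$, and other blocks zero; orthogonality of $u$ therefore yields $u_l(j)=\overline{u_0^*\col_j(X_l)}$, so
\[
\sum_{j=2}^N|u_l(j)|^2\le\|X_l^{(-1)}\|_\op^2\|u_0\|_2^2\le\bound^2\pssv^2,
\]
since $X_l^{(-1)}$ (the matrix $X_l$ with its first column removed) is a submatrix of $\LL_{(1)}$. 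Together with $\|u\|_2=1$ and $\|u_0\|_2<\pssv$, this forces $\sum_{l=1}^\rnk|u_l(1)|^2\ge 1/2$ once $\pssv$ is small enough (depending on $\bound,\rnk$).

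Second, I use orthogonality of $u$ to the columns $\col_{(0,j)}(\LLz)$ with $j\in[2,N]$. Splitting $u_l=u_l(1)e_1+u_l^\perp$ for $l\in[\rnk]$ and separating the leading term, these relations take the form
\[
\sum_{l=1}^\rnk\overline{u_l(1)}\,Y_l(1,j)=E_j,\qquad j\in[2,N],
\]
where $E_j$ collects three error terms (involving $u_0^*\col_j(Y_0)$, $(\shift-z)\overline{u_0(j)}$, and $(u_l^\perp)^*\col_j(Y_l)$). The bounds $\|Y_k^{(-1)}\|_\op\le\|\LL_{(1)}\|_\op\le\bound$ and $\|u_0\|_2,\|u_l^\perp\|_2\le\bound\pssv$ yield $\|(E_j)_{j=2}^N\|_2\le C_\pol(\bound^2+|z|)\pssv$. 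Writing this as $Y\alpha=E$ with $y_l:=(Y_l(1,j))_{j=2}^N$, $Y:=[y_1\mid\cdots\mid y_\rnk]\in\Mat_{N-1,\rnk}(\C)$, and $\alpha:=(\overline{u_l(1)})_{l=1}^\rnk$ of norm $\ge 1/\sqrt 2$, I conclude
\[
\smin(Y)\le\sqrt 2\,C_\pol(\bound^2+|z|)\pssv.
\]

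Third, I will show $\smin(Y)\gs_\pol 1$ with probability $1-O_\pol(e^{-N})$, which contradicts the previous bound once $\pssv$ is sufficiently small. Factor $Y=ZT$, where $Z\in\Mat_{N-1,\num}(\C)$ has $\ell$th column $(X_\ell(1,j))_{j=2}^N$ and $T:=[\overline{\slin_1}\mid\cdots\mid\overline{\slin_\rnk}]\in\Mat_{\num,\rnk}(\C)$. By \Cref{cor:linearize}, $\slin_1,\dots,\slin_\rnk$ are nonzero and orthogonal, so $\smin(T)=\min_l\|\slin_l\|_2=:c_\pol>0$ depends only on $\pol$. Meanwhile $\sqrt N\,Z$ has i.i.d.\ $\Normal_\C(0,1)$ entries, so standard concentration for tall Gaussian matrices gives $\smin(Z)\ge 1/2$ off an event of probability $\le e^{-cN}$. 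Hence $\smin(Y)\ge\smin(Z)\smin(T)\ge c_\pol/2$ with the required probability, and choosing $\pssv<c_\pol/(2\sqrt 2\,C_\pol(\bound^2+|z|))$ forces a contradiction, proving the lemma. I expect no genuine obstacle; the only subtlety is that $u$ is $\LL_{(1)}$-measurable and therefore not independent of $Z$, but since both the upper and lower bounds on $\smin(Y)$ are deterministic given $\LL_{(1)}$, the contradiction closes the argument without independence.
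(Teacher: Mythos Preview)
Your proof is correct and follows essentially the same route as the paper's: pick a unit vector $u\in(\subsV^z_{(1)})^\perp$ with small zeroth block, use orthogonality to the $l\ge1$ columns of $\LLz_{(1)}$ to bound the ``tails'' $(u_l(j))_{j\ge2}$, then use orthogonality to the $l=0$ columns to force the vector $(u_l(1))_{l\in[\rnk]}$ to witness a small singular value of the $\rnk\times(N-1)$ Gaussian matrix $\wh Y$ with rows $\row_1(Y_l')$---your $Y$ is its transpose. The only difference is how you lower-bound $\smin(Y)$: you factor $Y=ZT$ and invoke concentration for the tall rectangular Gaussian $Z$ directly, while the paper argues via a union bound over the large coordinate of $y=\sum x(k)\overline{\slin_k}$ and projects onto the orthocomplement of the remaining rows; your factorization is somewhat cleaner but both rely on the same orthogonality of $\slin_1,\dots,\slin_\rnk$ and the same Gaussian tail.
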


We prove this lemma in \Cref{sec:block0}.

By tensorizing an anti-concentration bound for $\Walk(\UU)$ we obtain the following anti-concentration estimate for $\UU^* \LLz_{(1)}$. 
In other words, we bound the probability that the columns of a fixed matrix $\UU$ are almost contained in $\subsV^z_{(1)}$.
The bounds assume $U^0$ is well conditioned, and  depend on how many columns of $\QQ$ are in general position with respect to each other and to the columns of $\RR$.
We use the following notation:
For given $\UU$, denoting the columns of $\QQ$ by $\bq_k$, $k\in [0,\rnk]$, 
we write $\Flat_I$ for the submatrix of $\Flat$ formed by the columns of $\RR$ together with the columns $\{\bq_k\}_{k\in I}$ of $\QQ$ (in the natural order, say, though we note the order of columns will not be important). 
We sometimes write $\Flat_\emptyset :=\RR$.
The column span of $\Flat_I$ is denoted $\subsp_I$, that is:
\begin{align}	
\subsp_I &:= \Span( \subsp_\emptyset, \bq_{k_1},\dots, \bq_{k_r}) \qquad \text{ for } I = \{ k_1,\dots, k_r\}\subset [0,\rnk].	\label{def:subspi}
\end{align} 

\begin{lemma}	\label{lem:mat-ac}
Let  $z\in \C$ and $\pssv>0$, and fix some arbitrary $\UU\in \good(\pssv)$.
Then for any $\eps>0$ and $1\le j\le N$,
\begin{equation}	\label{flat.bound0}
\pr \{ \| \UU^* \LLz_{(1)} \|_\HS \le \eps \} = O_{\pol,\pssv}(\eps^2)^{\rnk(\rnk+1)(N-1)}.
\end{equation}
Furthermore, if additionally it holds that for some $\pflat>0$, $1\le r\le \rnk+1$ and $k_1,\dots, k_r\in [0,\rnk]$ distinct,
\begin{equation}	\label{flat:interest}
\dist(\bq_{k_i}, \subsp_{\{k_1,\dots, k_{i-1}\}} ) \ge \pflat\qquad \forall\, 1\le i\le r,
\end{equation}
(interpreting the left hand side as $\dist(\bq_{k_1}, \subsp_\emptyset)$ when $i=1$), 
then for any $\eps>0$ and $1\le j\le N$,
\begin{equation} \label{mat-ac2}
\pr \{ \|\UU^*\LLz_{(1)} \|_\HS \le \eps \} = O_{\pol,\pssv,\pflat}(\eps^2)^{[\rnk(\rnk+1)+r](N-1)} \,.
\end{equation}
\end{lemma}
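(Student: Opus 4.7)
The plan is to reduce to a small-ball bound for each individual ``column block'' of $\UU^*\tLLz$ via independence, and then project onto a carefully chosen $s$-dimensional coordinate subspace of $\Mat_\DD(\C)$ where the resulting Gaussian has an explicitly controllable covariance. Writing $\|\UU^*\LLz_{(1)}\|_\HS^2=\sum_{j=2}^N\|\UU^*\col_j(\tLLz)\|_\HS^2$ and using that the columns $\col_j(\tLLz)$ for $j=2,\dots,N$ are independent (they involve disjoint Ginibre entries), the event $\{\|\UU^*\LLz_{(1)}\|_\HS\le\eps\}$ is contained in $\bigcap_{j=2}^N\{\|\UU^*\col_j(\tLLz)\|_\HS\le\eps\}$, so
\[
\pr\{\|\UU^*\LLz_{(1)}\|_\HS\le\eps\}\;\le\;\prod_{j=2}^N\pr\{\|\UU^*\col_j(\tLLz)\|_\HS\le\eps\}.
\]
By \eqref{jwalk}--\eqref{def:walkvec}, each factor equals the small-ball probability of a Gaussian matrix $M_j+\tfrac{1}{\sqrt N}\Flat^*\gvec^{(j)}$ in $\Mat_\DD(\C)\cong\C^{(\rnk+1)^2}$, where $M_j=U_j^*\Kez$ is deterministic given $\UU$ and the $\gvec^{(j)}\in\C^{\num N}$ are i.i.d.\ standard complex Gaussians.

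Next I bound each single-column probability by projecting onto an $s$-coordinate subspace, where $s:=\rnk(\rnk+1)+r$ (take $r=0$ for \eqref{flat.bound0}). Under the identification of $\Mat_\DD(\C)$ with $\C^{(\rnk+1)^2}$ coming from $\Walk(\UU)=\tfrac1{\sqrt N}\Flat^*\gvec$, each target coordinate corresponds to a single column of $\Flat$: the $\rnk(\rnk+1)$ coordinates in columns $[1,\rnk]$ of the target match the columns of $\RR$, while the $\rnk+1$ coordinates in column $0$ match the $\QQ$-columns $\bq_0,\dots,\bq_\rnk$. Let $I$ consist of all $\RR$-coordinates together with the $r$ coordinates corresponding to $\bq_{k_1},\dots,\bq_{k_r}$, and let $\Flat_I$ be the submatrix of $\Flat$ with these $s$ columns. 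Since coordinate projection can only decrease the Hilbert--Schmidt norm, the single-column event is contained in $\{\|\pi_IM_j+\tfrac1{\sqrt N}\Flat_I^*\gvec^{(j)}\|_2\le\eps\}$. Assuming $\Flat_I$ has full column rank $s$, this is a small-ball event for a complex Gaussian in $\C^s$ with covariance $\tfrac1N\Flat_I^*\Flat_I$; its density is bounded pointwise by $N^s/(\pi^s\det(\Flat_I^*\Flat_I))$, and integrating over the ball of radius $\eps$ (real $2s$-dimensional volume $\pi^s\eps^{2s}/s!$) gives
\[
\pr\{\|\UU^*\col_j(\tLLz)\|_\HS\le\eps\}\;\le\;\frac{(N\eps^2)^s}{s!\,\det(\Flat_I^*\Flat_I)}.
\]

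The remaining task is the lower bound on $\det(\Flat_I^*\Flat_I)$, which I obtain via Gram--Schmidt, ordering the columns of $\Flat_I$ as ``all of $\RR$, then $\bq_{k_1},\dots,\bq_{k_r}$''. The columns of $\RR$ split into $\rnk$ packs supported on disjoint row-blocks $\{k\}\times[N]$ for $k=1,\dots,\rnk$, so different packs are mutually orthogonal and the $k$-th pack contributes $\det((U^0)^*U^0)\ge\pssv^{2(\rnk+1)}$ by the hypothesis $\smin(U^0)\ge\pssv$; hence $\det(\RR^*\RR)\ge\pssv^{2\rnk(\rnk+1)}$. The subsequent $r$ Gram--Schmidt factors are exactly $\dist(\bq_{k_i},\subsp_{\{k_1,\dots,k_{i-1}\}})^2\ge\pflat^2$ by hypothesis \eqref{flat:interest}---the inductive definition of $\subsp_I$ in \eqref{def:subspi} is precisely tailored to this. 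Therefore $\det(\Flat_I^*\Flat_I)\ge\pssv^{2\rnk(\rnk+1)}\pflat^{2r}$. Substituting into the single-column bound and multiplying over $j\in[2,N]$ yields the claimed bounds, with the polynomial-in-$N$ factor $N^{s(N-1)}$ absorbed into the $O_{\pol,\pssv,\pflat}(\cdot)$ constant.

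The main delicate point is precisely the choice of the coordinate set $I$ that makes $\det(\Flat_I^*\Flat_I)$ factor cleanly through the given hypotheses: the block-diagonal support pattern of $\RR$ supplies the $\pssv$-factor, and the recursive definition of $\subsp_I$ supplies the $\pflat$-factors in exactly the multiplicities appearing in \eqref{flat.bound0} and \eqref{mat-ac2}. Once this identification is in place, the rest is standard Gaussian anti-concentration via density times volume.
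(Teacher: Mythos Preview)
Your approach is on the right track (single-column Gaussian small-ball via the Gram determinant of $\Flat_I$, then independence across the $N-1$ column blocks), and your lower bound $\det(\Flat_I^*\Flat_I)\ge\pssv^{2\rnk(\rnk+1)}\pflat^{2r}$ is exactly right and matches the paper.  However, there is a genuine gap in the final step: the factor $N^{s(N-1)}$ is \emph{not} polynomial in $N$---it is super-exponential---so it cannot be absorbed into a constant depending only on $\pol,\pssv,\pflat$.  With your containment $\{\sum_{j=2}^N\xi_j^2\le\eps^2\}\subseteq\bigcap_j\{\xi_j\le\eps\}$ you are evaluating each single-column small-ball probability at radius $\eps$, and since $\xi_j$ has the form $\|M_j+\tfrac1{\sqrt N}\Flat^*\gvec^{(j)}\|$, rescaling by $\sqrt N$ gives $\pr\{\xi_j\le\eps\}\le O(N\eps^2)^s/\det(\Flat_I^*\Flat_I)$.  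Taking the product over $N-1$ columns produces $(C_{\pol,\pssv,\pflat}\,N\eps^2)^{s(N-1)}$, which is strictly weaker than the claimed $O_{\pol,\pssv,\pflat}(\eps^2)^{s(N-1)}$.

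The paper fixes this with a tensorization lemma (\Cref{lem:tensorize}): from $\pr\{\xi_j\le\eps_1\}\le\eps_2$ one gets $\pr\{\sum_j\xi_j^2\le\eps_1^2 m\}=O(\eps_2)^m$.  The point is that one should evaluate the single-column bound at scale $\eps_1\sim\eps/\sqrt{N}$ rather than $\eps$; at that scale the $\sqrt N$ from the rescaled Gaussian and the $1/\sqrt N$ from $\eps_1$ cancel, yielding $\pr\{\xi_j\le\eps/\sqrt{N}\}=O_{\pol,\pssv,\pflat}(\eps^2)^s$ with no $N$-dependence, and then tensorization delivers the stated bound.  Your argument can be repaired by inserting this step in place of the naive containment.
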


We defer the proof to \Cref{sec:mat-ac}.

We get the best anti-concentration estimate in \eqref{mat-ac2} when $r=\rnk+1$, i.e.\ when the full walk matrix $\Flat(\UU)$ is well conditioned. 
The following lemma shows we may assume this is the case.
For $\pflat>0$ we define the set
\begin{equation}
\badR(\pflat) := \bigcup_{k\in [0,\rnk]} \big\{\, \UU \in \matt{[0,\rnk]\times[N]}{[0,\rnk]}{\C} : \dist(\bq_k, \subsp_{[0,\rnk]\setminus\{k\}}) <\pflat \,\big\}
\end{equation}

\begin{lemma}[Walks usually have full rank]
\label{lem:fullrank}
For any $\pssv>0$ and $\bound\ge1$ there exists $\pflat_\star(\pol,\pssv,\bound)>0$ such that for any $\pflat\in (0,\pflat_\star]$ and $z\in \C$,
\[
\pr\{\, \UU_1\in \badR(\pflat) \cap \good(\pssv) \; \wedge\; \|\LLz\|_\op \le \bound \, \} 
\ls_{\pol} e^{-N}.
\]
\end{lemma}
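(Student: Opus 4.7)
The plan is an $\eps$-net argument combined with Lemma~\ref{lem:mat-ac}, exploiting the fact that $\badR(\pflat)$ has strictly lower effective metric dimension than the ambient space of orthonormal $(\rnk{+}1)$-frames in $\C^{(\rnk+1)N}$. The key observation is that for $\UU \in \badR(\pflat)$, some column $\bq_k$ of $\QQ(\UU)$ is $\pflat$-close to the span of the other columns of $\Flat(\UU)$; since $\bq_k$ depends only on $(\col_k(U^0), \dots, \col_k(U^\rnk))$ through the fixed linear map determined by $\slin_0, \dots, \slin_\rnk$, this closeness amounts to a codimension $\approx (\rnk+1)N$ constraint on $\UU$, which must be leveraged to balance the anti-concentration bound.

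First, stratify $\badR(\pflat) \cap \good(\pssv)$ by the \emph{maximal general-position rank} $r \in \{0, 1, \dots, \rnk\}$: the largest $r$ such that columns $\bq_{k_1}, \dots, \bq_{k_r}$ can be selected with $\dist(\bq_{k_i}, \subsp_{\{k_1, \dots, k_{i-1}\}}) \ge \pflat_i$ for a geometric hierarchy of positive thresholds $\pflat_1 > \pflat_2 > \cdots$ (all chosen as functions of $\pol, \pssv, \bound$). Any $\UU \in \badR(\pflat)$ lies in a stratum with $r \le \rnk$. On each stratum, build a Hilbert--Schmidt $\eps$-net $\cN_r$ of cardinality $(C_\pol/\eps)^{2r(\rnk+1)N + O_\pol(1)}$ as follows: discretize the $r$ general-position columns $\bq_{k_i}$ together with $U^0$ (which spans the columns of $\RR$), then discretize each of the $\rnk{+}1-r$ constrained bad columns $\bq_k$ inside the $O(\pflat)$-tube around the appropriate low-dimensional subspace, and translate back to constraints on the $\UU$-parameters via the linear map determined by $\slin_0,\dots,\slin_\rnk$. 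Choosing $\eps$ small enough ensures each net element $\hUU$ remains in $\good(\pssv/2)$ with $r$ quantitatively well-spaced general-position columns.

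For each $\hUU \in \cN_r$, apply Lemma~\ref{lem:mat-ac}\eqref{mat-ac2} with the $r$ general-position columns, giving $\pr\{\|\hUU^* \LLz_{(1)}\|_\HS \le \bound\eps\} \ls_{\pol,\pssv,\pflat} (\eps^2)^{[\rnk(\rnk+1)+r](N-1)}$. On the event $\{\|\LLz\|_\op \le \bound\}$, membership of $\UU_1$ in the $r$-th stratum forces $\|\hUU^*\LLz_{(1)}\|_\HS \le \bound\eps$ for some $\hUU \in \cN_r$ (by the $\eps$-approximation and $\UU_1^* \LLz_{(1)} = 0$), so the union bound over $r$ and $\cN_r$ gives
\[
\pr(\cdot) \ls_{\pol,\pssv,\pflat} \sum_{r=0}^\rnk (C_\pol/\eps)^{2r(\rnk+1)N + O_\pol(1)} (\eps^2)^{[\rnk(\rnk+1)+r](N-1)} \ls_{\pol,\pssv,\pflat} \eps^{2N\rnk - O_\pol(1)},
\]
where the final inequality uses that for each $r \le \rnk$ the exponent of $\eps$ is $2N\rnk(\rnk{+}1-r) - O_\pol(1) \ge 2N\rnk - O_\pol(1)$. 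Choosing $\eps$ a sufficiently small constant depending on $\pol, \pssv, \bound$ gives the bound $\ls e^{-N}$, and $\pflat_\star$ is defined correspondingly.

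The main obstacle is the construction of the net $\cN_r$ on each stratum with the claimed cardinality: one must exploit the linear structure ``$\bq_k \in V_k + O(\pflat) \cdot \ball$'' via the $\slin$-map to reduce the effective dimension by $(\rnk{+}1)N$ per bad column, while simultaneously tracking approximation errors so that $\hUU \in \good(\pssv/2)$ and the $r$ selected columns of $\QQ(\hUU)$ retain the quantitative general-position property required by Lemma~\ref{lem:mat-ac}\eqref{mat-ac2}. A secondary technicality arises if the coefficient matrix $S = (\slin_{k,\ell})_{k \in [0,\rnk], \ell \in [n]}$ is rank-deficient, i.e.\ when $\slin_0 \in \Span(\slin_1,\dots,\slin_\rnk)$: then $\bq_k$ is not freely parameterized by $(\col_k(U^{k'}))_{k'}$ and the codimension count above must be adjusted, which requires a separate degenerate-case analysis.
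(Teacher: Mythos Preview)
Your approach is essentially the paper's: stratify $\badR(\pflat)\cap\good(\pssv)$ by the maximal number $r$ of columns of $\QQ$ in quantitative general position, construct nets on each stratum exploiting the low-dimensional constraint on the remaining columns, apply Lemma~\ref{lem:mat-ac} with that $r$, and close with a union bound using a recursively chosen hierarchy of thresholds.

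However, your net cardinality $(C_\pol/\eps)^{2r(\rnk+1)N+O_\pol(1)}$ is miscounted. You correctly say one must discretize $U^0$ together with the $r$ free columns, but discretizing $U^0$ alone already costs $(C/\eps)^{2(\rnk+1)N}$; the $r$ free columns (parameterized via the $\bu'_{k_i}\in\C^{\rnk N}$ once their $U^0$-components are fixed) add $r\cdot\rnk N$ further complex parameters. The correct bound, as in the paper's Claims in Section~\ref{sec:struct.rank}, is $|\cN_r|\le O_\pol(1/\eps^2)^{((r+1)\rnk+1)N}$. With this correction your final exponent of $\eps^2$ becomes $\rnk(\rnk-r)+r-1$ rather than $\rnk(\rnk+1-r)$. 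This is still $\ge 1$ for all $0\le r\le\rnk$ provided $\rnk\ge 2$, so the argument survives --- but note it would \emph{fail} for $\rnk=1$ (the exponent is $0$ at both $r=0$ and $r=1$), which is precisely why Lemma~\ref{lem:rnk1} handles that case separately. Your count would wrongly suggest the net argument works even for $\rnk=1$.

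Your concern about rank-deficiency of the coefficient matrix is unnecessary: the paper parameterizes $\UU$ not through $\Slin$ but through the augmented matrix $\tSlin$ obtained by prepending $e_1$ as zeroth row (see \eqref{def:tS+}), and $\tSlin$ has full column rank because $\slin_1,\dots,\slin_\rnk$ are linearly independent, regardless of whether $\slin_0$ lies in their span.
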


We prove this lemma in \Cref{sec:struct.rank}. While the lemma allows us to apply \eqref{mat-ac2} with $r=\rnk+1$, we will need to apply this estimate with smaller values of $r$ in the proof.

In view of Lemmas \ref{lem:block0} and \ref{lem:fullrank} it only remains to bound the probability that $\UU_1$ lies in the set
\begin{equation}	
\cE = \cE(\pstr,\pssv,\pflat) := \Struct(\pstr) \cap \good(\pssv)\cap \badR(\pflat)^c.
\end{equation}
We will bound the probability that $\UU_1$ lies in $\cE$ by combining the the anti-concentration estimate \eqref{mat-ac2} (taking $r=\rnk+1$) with a union bound over a suitable net, provided by the following:

\begin{lemma}\label{lem:struct.net}
For any $\pssv>0$, $\pnet\in (0,1)$ and $0<\delta\le \pnet^2 N^{-\frac{\rnk}2-2}$, $\cE$ has a (Hilbert--Schmidt) $\pnet$-net $\cN\subset \cE$ of size 
\begin{equation}	\label{cNbound}
|\cN| = O_{\pol,\pssv}(1)^N N^{(2\rnk+3)N} O(1/\pnet^2)^{(\rnk^2+\frac32\rnk +\frac32)N}.
\end{equation}
\end{lemma}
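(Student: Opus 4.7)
My plan is to parametrize $\UU \in \cE$ efficiently by first extracting, via Cauchy--Binet, a small subset $I \subset [N]$ of size $\rnk+1$ providing a well-conditioned basis $\{v_i^0 : i \in I\}$ of $\C^{[0,\rnk]}$, and then exploiting the constraints defining $\Struct(\delta)$ to determine each non-basis block $U_i$ ($i \notin I$) up to corrections of much smaller effective dimension than the naive $(\rnk+1)^2$. Since $\smin(U^0) \ge \pssv$, applying Cauchy--Binet to $(U^0)^* U^0$ produces some $I$ with $|\det(U^0_I)| \gs \pssv^{\rnk+1}/\binom{N}{\rnk+1}^{1/2}$; combined with $\|U^0_I\|_\op \le 1$ this gives $\smin(U^0_I) \gs \pssv^{\rnk+1}/N^{(\rnk+1)/2}$. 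I would union-bound over the at most $N^{\rnk+1}$ choices of $I$, and for each $i \in I$ discretize the block $U_i$ directly in the Hilbert--Schmidt norm (using $\|U_i\|_\HS \le (\rnk+1)^{1/2}$), at an absolute-in-$N$ cost $(C_\rnk/\pnet)^{2(\rnk+1)^3}$.

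The geometric heart of the argument is to extract from $\Struct(\delta)$ and our conditioned basis the following consequences for each $i \notin I$, with $w_i^\ell := \sum_k \overline{\slin_{k,\ell}} v_i^k$: (a) $\|w_i^\ell\|_2 \ls_{\pol,\pssv} \delta\, N^{\rnk/2+1}$ for $\ell > \rnk$, obtained from $\Struct_1$ with $i_0 = i$ and $(i_1,\dots,i_\rnk)$ ranging over the $\rnk+1$ distinct $\rnk$-subsets of $I$, placing $w_i^\ell$ near each of $\rnk+1$ hyperplanes whose intersection is $\{0\}$; and (b) $\|w_i^\ell - c_i^\ell v_i^0\|_2 \ls_{\pol,\pssv} \delta\, N^{\rnk/2+1}$ for some $c_i^\ell \in \C$ and each $\ell \in [\rnk]$, obtained from $\Struct_2$ with $i_\ell = i$ and the other $\rnk-1$ positions drawn from $I$, producing $\rnk$ hyperplanes through $v_i^0$ whose common intersection is $\Span(v_i^0)$. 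The hypothesis $\delta \le \pnet^2 N^{-\rnk/2-2}$ makes both tolerances $\ls_{\pol,\pssv} \pnet^2/N$, well below the net scale. Parametrizing $U_i$ by $(v_i^0, (c_i^\ell)_{\ell \in [\rnk]}) \in \C^{\rnk+1} \times \C^\rnk$, the remaining rows $v_i^1, \dots, v_i^\rnk$ are then recovered (to the above tolerance) from the overdetermined linear system $\sum_{k=1}^\rnk \overline{\slin_{k,\ell}} v_i^k = w_i^\ell - \overline{\slin_{0,\ell}} v_i^0$, whose $\rnk \times \num$ coefficient matrix $(\overline{\slin_{k,\ell}})_{k\in[\rnk],\ell\in[\num]}$ has full row rank $\rnk$ (since $\slin_1,\dots,\slin_\rnk$ are nonzero orthogonal) with $O_\pol(1)$ conditioning. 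In the degenerate case $\slin_0 \in \Span(\slin_1,\dots,\slin_\rnk)$, the full $(\rnk+1) \times \num$ matrix $(\overline{\slin_{k,\ell}})$ has rank only $\rnk$ and I would additionally net the $(\rnk+1)$ complex residual dimensions at cost $(C/\pnet)^{2(\rnk+1)}$.

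For the per-site count on $i \notin I$: the net for $v_i^0$ in the unit ball of $\C^{\rnk+1}$ has size $(C/\pnet)^{2(\rnk+1)}$. For each $c_i^\ell$, although $|c_i^\ell| \le O_\pol(1)/\|v_i^0\|$ may blow up as $\|v_i^0\| \to 0$, the appropriate discretization scale is $\pnet/\|v_i^0\|$ (so that the product $c_i^\ell v_i^0$ is netted at scale $\pnet$), yielding a uniform size $O_\pol(1/\pnet^2)$ per pair $(\ell,i)$ independently of $\|v_i^0\|$. Thus per non-basis site the net has size $O_\pol(1/\pnet)^{4\rnk+2}$ (or $O_\pol(1/\pnet)^{6\rnk+4}$ in the degenerate subcase), comfortably below the per-site budget $(1/\pnet)^{2\rnk^2+3\rnk+3}$ implied by the target for $\rnk \ge 2$. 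Multiplying over the $N-\rnk-1$ non-basis sites, combining with the $N^{\rnk+1}$ basis-choice factor and absorbing absolute-in-$N$ constants into $O_{\pol,\pssv}(1)^N$, yields the claimed bound. The main obstacle is the rigorous execution of (a)/(b): carefully selecting the $\rnk+1$ and $\rnk$ families of hyperplanes and tracking how errors propagate when intersecting approximate hyperplanes, which brings in polynomial factors in the basis conditioning $\sim N^{(\rnk+1)/2}/\pssv^{\rnk+1}$; the calibration $\delta \le \pnet^2 N^{-\rnk/2-2}$ is set precisely so that the resulting $w_i^\ell$-tolerance is subsumed by $\pnet$.
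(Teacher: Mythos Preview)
Your strategy differs substantially from the paper's and is more ambitious in the dimension savings it attempts, but step (b) has a genuine gap. The $\Struct_2$ constraint with $i_\ell=i$ reads $|\det(w_i^\ell, v_{i_1}^0,\dots,v_{i_{\ell-1}}^0, v_i^0, v_{i_{\ell+1}}^0,\dots,v_{i_\rnk}^0)|<\delta$, and the vector $v_i^0$ itself occupies one of the columns. When $v_i^0=0$ this determinant vanishes identically and $\Struct_2$ gives no information whatsoever about $w_i^\ell$; more generally, projecting to $\langle v_i^0\rangle^\perp$ and carrying out the hyperplane intersection yields $\dist(w_i^\ell,\langle v_i^0\rangle)\ls_{\pol,\pssv}\delta N^{(\rnk+1)/2}/\|v_i^0\|$ rather than the uniform $\delta N^{\rnk/2+1}$ you claim. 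Your parametrization of $U_i$ by $(v_i^0,(c_i^\ell)_\ell)$ therefore breaks down on the set of $i$ where $\|v_i^0\|$ is small, and adjusting the discretization scale of $c_i^\ell$ does not repair this: it is the location of $w_i^\ell$ itself (and hence of $v_i^1,\dots,v_i^\rnk$ through your linear system) that is undetermined, not merely a scalar coefficient.

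The paper is structured differently. Rather than recover all of $v_i^1,\dots,v_i^\rnk$ from $v_i^0$, it nets $\rnk$ of the $\rnk+1$ blocks $U^k$ directly and recovers only a single block $U^{k_0}$ using one index $\ell_0$. If $\slin_{k_0,\ell_0}\ne0$ for some $\ell_0>\rnk$ (Case A, your scenario (a)), then $\Struct_1$ forces every $w_j^{\ell_0}$ small and $v_j^{k_0}$ is recovered from the already-netted $v_j^k$, $k\ne k_0$, with no dependence on $\|v_j^0\|$. If only some $\ell_0\le\rnk$ is available (Case B), the paper confronts exactly the degeneracy you miss: it splits $[N]$ into $J=\{j:\|v_j^0\|\ge\pnet^{1/2}/\sqrt{N}\}$ and $J^c$. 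On $J$ the $\Struct_2$ argument goes through (with $v_j^0$ netted at the finer scale $\pnet^{3/2}/N$ to stabilize the projected basis), while on $J^c$ the row $v_j^{k_0}$ is netted directly at full cost, compensated by netting $U^0_{J^c}$ inside a ball of radius only $\pnet^{1/2}$. This tradeoff, combined with the fine net on $J$, is what produces the factor $N^{(2\rnk+3)N}$ in the stated bound.
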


\begin{remark}
The proof in fact yields a net for any subset of $\Struct(\delta)\cap \good(\pssv)$, which is why the bound \eqref{cNbound} is independent of 
$\pflat$.
\end{remark}

The proof of \Cref{lem:struct.net} is deferred to \Cref{sec:struct.net}.
We now conclude the proof of \Cref{prop:struct} assuming the above lemmas.
Let $\pstr>0$ to be chosen sufficiently small in the course of the proof.
From \Cref{lem:norm} and the triangle inequality we have that $\|\LL\|_\op= O_\pol(1)$ with probability $1-O_\pol(e^{-N})$, and hence $\|\LL_{(1)}\|_\op, \|\LLz\|_\op,\|\LLz_{(1)}\|_\op=O_{\pol,z}(1)$ with probability $1-O_\pol(e^{-N})$.
Applying the union bound, for $\alpha,\beta>0$ and some $\bound=O_{\pol,z}(1)$ we have
\begin{align*}
\pr \{ \UU_1\in \Struct(\pstr) \} 
&\le \pr\{ \UU_1 \in \cE(\pstr,\pssv,\pflat) \;\wedge\; \| \LLz_{(1)}\|_\op \le \bound\}\\
&\quad+ \pr\{ \UU_1\notin\good(\pssv) \;\wedge\; \|\LL_{(1)}\|_\op\le \bound\}\\
&\quad+ \pr\{ \UU_1\in \good(\pssv)\cap \badR(\pflat) \; \wedge\; \|\LLz\|_\op \le \bound\} 
+ O_\pol(e^{-N}).
\end{align*}
For this choice of $\bound$ we now choose $\pssv\gs_{\pol,z}1$ satisfying the constraint in \Cref{lem:block0}, followed by $\pflat\gs_{\pol,z}1$ satisfying the constraint in \Cref{lem:fullrank}. From those lemmas we then have
\begin{equation}
\pr \{ \UU_1\in \Struct(\pstr) \} \le \pr\{ \UU_1\in \cE(\pstr,\pssv,\pflat) \; \wedge\; \|\LLz\|_\op \le \bound\}  + O_{\pol}(e^{-N}).
\end{equation}
Let $\pnet>0$ to be chosen later and let $\cN$ be as in \Cref{lem:struct.net} (assuming $\pstr$ is sufficiently small depending on the choice of $\pnet$). 
By approximating any realization of $\UU_1\in \cE$ by some $\UU\in \cN$ with $\|\UU_1-\UU\|_\HS\le \pnet$ and using that 
\[
\|(\UU_1-\UU)^*\LLz_{(1)}\|_\HS \le \|\LLz\|_\op \|\UU_1-\UU\|_\HS 
\]
we have, since $ \UU_{1}^*\LLz_{(1)}$ vanishes everywhere by definition, 
\begin{align}
 \pr\{ \UU_1\in\cE \; \wedge\; \|\LLz\|_\op \le \bound\}
 &\le \pr\{ \exists \UU\in \cE: \UU^*\LLz_{(1)} = 0 \;\wedge\;  \|\LLz\|_\op \le \bound \}  \notag \\
 &\le \pr\{ \exists \UU\in \cN:  \|\UU^*\LLz_{(1)}\|_\HS\le \bound\pnet \}.	\label{prop7.net1}
\end{align}
Now applying the union bound and \eqref{mat-ac2} with $r=\rnk+1$, using that $\cN\subset \cE\subset\badR(\beta)^c$, we have
\begin{align}
 \pr\{ \UU_1\in\cE \; \wedge\; \|\LLz\|_\op \le \bound\}
 &\le |\cN| O_{\pol,z}(\pnet^2)^{(\rnk+1)^2(N-1)}	\label{prop7.net2} \\
 &\le O_{\pol,z}(1)^N N^{(2\rnk+3)N} \pnet^{2[(\rnk+1)^2 - \rnk^2 -\frac32\rnk-\frac32 ] N - O_\pol(1)}	\notag\\
 &=O_{\pol,z}(1)^N N^{(2\rnk+3)N} \pnet^{ (\rnk-1) N - O_\pol(1)}.	\notag
\end{align}
Recalling our assumption that $\rnk\ge2$, one verifies the last expression is at most 
\[
O_{\pol,z}(1)^N (c')^{N-O_\pol(1)} \ls_\pol e^{-N}
\]
if we take
\[
\pnet = c' N^{-(2\rnk+3)/(\rnk-1)}
\]
for a sufficiently small constant $c'(\pol,z)>0$. Now one verifies that the condition of \Cref{lem:struct.net} is satisfied for all $N\ge N_0$ for a sufficiently large constant $N_0(\pol,z)>0$ when $\pstr = N^{-\frac{\rnk}2-10}$. 
This completes the proof of \Cref{prop:struct} assuming Lemmas \ref{lem:block0}, \ref{lem:mat-ac}, \ref{lem:fullrank} and \ref{lem:struct.net}.
\qed

\subsection{Proof of \Cref{lem:block0}: Reduction to bases with a well-conditioned zeroth block}
\label{sec:block0}

We express $\LLz_{(1)}$ in block form as follows:
\[
\LLz_{(1)} = 
    \left(
      \begin{array}{c|c|c|c|c}
        \begin{array}{c}  \row_1(\Ylin_0') \\ \Ylin_0'' + (\shift-z) \id_{N-1} \end{array}  & 
        \begin{array}{c}  \row_1(X_1') \\ X_1'' \end{array} & 
        \begin{array}{c}  \row_1(X_2') \\ X_2'' \end{array} &
        \quad\cdots\quad &
        \begin{array}{c}  \row_1(X_\rnk') \\ X_\rnk'' \end{array} \\
        \hline
        \begin{array}{c}  \row_1(\Ylin_1') \\ \Ylin_1'' \end{array} & 
        \begin{array}{c}  0_{1,N-1} \\ -\id_{N-1} \end{array}  & 
        0 &
        \quad\cdots\quad &
        0 \\
        \hline
        \begin{array}{c}  \row_1(\Ylin_2') \\ \Ylin_2'' \end{array} & 
        0 & 
        \begin{array}{c}  0_{1,N-1} \\ -\id_{N-1} \end{array} 
         & &  \vdots \\
        \hline  
        & & & & \\
        \vdots & \vdots & & \ddots & 0 \\
        & & & & \\
        \hline
	\begin{array}{c}  \row_1(\Ylin_\rnk') \\ \Ylin_\rnk'' \end{array} & 
        0 & \quad\cdots\quad & 0 &
        \begin{array}{c}  0_{1,N-1} \\ -\id_{N-1} \end{array}  
      \end{array}
    \right).
\]
Here, $\Ylin_k'$ (resp.\ $X_k'$) is the matrix obtained by removing the first column from $\Ylin_k$ (resp.\ $X_k$), while $\Ylin_k''$ (resp.\ $X_k''$) is the $N-1\times N-1$ matrix obtained by removing the first row and column from $\Ylin_k$ (resp.\ $X_k$). $0_{1,N-1}$ is the row vector of $N-1$ zeros. 

Let
\begin{equation}
\wh{\Ylin} 
= 
\begin{pmatrix} \row_1(\Ylin_1') \\ \vdots \\ \row_1(\Ylin_\rnk') \end{pmatrix} \in \Mat_{\rnk, N-1}(\C).
\end{equation}
We first show that
\begin{equation}	\label{wtS.bound}
\pr\{ \smin(\wh{\Ylin}) < c_\pol'\} \ls_\pol e^{- N}
\end{equation}
if $c_\pol'>0$ is sufficiently small. 
On the event that $\smin(\wh{\Ylin})<c_\pol'$, there exists $x\in \sph^{\rnk-1}$ such that 
\[
c_\pol' > \| x^* \wh{\Ylin} \|_2  = \Big\| \sum_{k\in [\rnk]} x(k) \sum_{\ell\in [\num]} \overline{\slin_{k,\ell}} \row_1(X_\ell') \Big\|_2 = \|y(1)\row_1(X'_1) + \cdots + y(\num) \row_1(X'_\num) \|_2
\]
where $y:= \sum_{k=1}^\rnk x(k) \overline{\slin_{k}}$. Since $\slin_1,\dots, \slin_\rnk$ are mutually orthogonal and nonzero it follows that $\|y\|_2 \gs_\pol1$, and in particular $y$ must have a coordinate $y(\ell_0)$ of size $|y(\ell_0)|\gs_\pol 1$.
By projecting $\row_1(X_{\ell_0}')$ to the orthocomplement of $\wt{\subsW}_{(\ell_0)}=\Span\{ \row_1(X'_\ell)\}_{\ell\ne\ell_0}$ we see that $\dist( \row_1(X'_{\ell_0}), \wt{\subsW}_{(\ell_0)}) \ls_\pol c_\pol'$. 
Taking a union bound to fix $\ell_0$ and conditioning on the rows $\row_1(X'_\ell)$ with $\ell\ne\ell_0$, we are left with bounding the probability that the vector $\row_1(X_{\ell_0}')$ is within distance $O_\pol(c_\pol')$ of a fixed subspace of $\C^{N-1}$ of dimension $\num-1$. Since $X_\ell$ has i.i.d.\ complex Gaussian entries of variance $1/N$, by rotational invariance of the Gaussian measure we may assume $\wt{\subsW}_{(\ell_0)}$ is the coordinate subspace spanned by $e_2,\dots, e_{\num}$, and we have reduced to bounding the probability of the event that
\[
\sum_{j=\num+1}^N |X_{\ell_0}(1,j)|^2  = O_\pol(c_\pol'). 
\]
Now we can take $c_\pol'$ sufficiently small to make the right hand side smaller than any fixed constant, so that this event has probability at most $e^{-N}$ as soon as $N\ge 10\num$, say, and \eqref{wtS.bound} follows.

We now fix for the remainder of the proof an arbitrary realization of $\LL_{(1)}$ such that $\|\LL_{(1)}\|_\op \le B$ and $\smin(\wh{\Ylin}) \ge c_\pol'$.
This fixes the subspace $(\subsV^z_{(1)})^\perp$. 
We claim that
\begin{equation}	\label{block0.empty}
\big\{\, \UU=(\bu_0,\dots, \bu_\rnk) : \bu_0, \bu_1,\dots, \bu_\rnk \text{ orthonormal in } (\subsV^z_{(1)})^\perp \,\big\}
\cap \good(\pssv )^c = \emptyset
\end{equation}
for $\pssv <c_\pol/(B^2+|z|)$. 
From this the lemma clearly follows.

Turning to prove \eqref{block0.empty}, towards a contradiction we let $\UU$ be an arbitrary element of 
the left hand side of \eqref{block0.empty}.
Then there exists a unit vector $\bu\in (\subsV^z_{(1)})^\perp$ such that $\|u^0\|_2<\pssv $ (recall that $u^0,\dots, u^\rnk$ denote the blocks of $N$ coordinates of $\bu$). For each $k$ we write $u^k = (u^k(1), \check{u}^k)$, i.e.\ $\check{u}^k = (u^k(2),\dots, u^k(N))$.
Fixing such a $\bu$, we note that $\bu\in (\subsV^z_{(1)})^\perp$ is equivalent to the statement that $\bu^*$ is a left null vector for $\LLz_{(1)}$. 
In particular, for each $1\le k\le \rnk$ we have
\begin{equation}
\overline{u^0(1)} \row_1(X_k') + (\check{u}^0)^* X_k'' = (\check{u}^k)^*.
\end{equation}
Since $\|\LL_{(1)}\|_\op \le B$ and $\|u^0\|_2<\pssv $ this implies
\begin{equation}	\label{ucheck.bd}
\|\check{u}^k\|_2 \le 2B/\pssv , \quad \forall \,1\le k\le \rnk.
\end{equation}

Again from the fact that $\bu^*$ is a left null vector for $\LLz_{(1)}$, we have
\[
\sum_{k=1}^\rnk \overline{u^k(1)} \row_1(\Ylin_k') 
= - \overline{u^0(1)} \row_1(\Ylin_0') - (\check{u}^0)^* (\Ylin_0''+ (\shift-z)\id_{N-1}) - \sum_{k=1}^\rnk (\check{u}^k)^* \Ylin_k''
\]
for each $1\le k\le \rnk$.
Writing $\check{u}_1 = (u^1(1), \dots, u^\rnk(1))$, the left hand side has norm at least $\smin(\wh{\Ylin})\|\check{u}_1\|_2 \ge c_\pol'\|\check{u}_1\|_2$. Since $\LL_{(1)}$, and hence any of its submatrices, has norm at most $B$, together with \eqref{ucheck.bd} and $\|u^0\|_2<\pssv $ this implies by the triangle inequality that the right hand side is $O_\pol(\pssv (B^2+|z|))$. 
Putting these bounds together we have
\[
\|\check{u}_1\|_2 \ls_\pol (B^2 + |z|)\pssv .
\]
Combined with \eqref{ucheck.bd} and $\|u^0\|_2<\pssv $ we conclude
\[
1 = \|\bu\|_2^2 = \|u^0\|_2^2 + \|\check{u}_1\|_2^2 + \sum_{k=1}^\rnk \|\check{u}^k\|_2^2 
\le \pssv^2 ( 1+ O_\pol(B^2 + |z|)^2)
\]
and we obtain a contradiction for $\pssv<c_\pol /(B^2+|z|)$ if $c_\pol$ is taken sufficiently small. 
This implies 
the the left hand side of \eqref{block0.empty}
is empty for such a choice of $\pssv$, and completes the proof.

\subsection{Proof of \Cref{lem:mat-ac}: Anti-concentration for matrix random walks}
\label{sec:mat-ac}

\begin{lemma}
\label{lem:anti-vec}
Let $M\in \Mat_{d,d'}(\C)$ and let $Z= (\zeta_1,\dots, \zeta_{d'})$ be a vector of i.i.d.\ standard complex Gaussians. For $I\subseteq[d]$ let $M_I\in \Mat_{|I|,d'}(\C)$ be the submatrix of $M$ formed by the rows indexed by $I$.
If $M_I$ has full row rank for some $I\subseteq[d]$ of size $d_0$, then
\begin{equation}	\label{antic}
\sup_{w\in \mathbb C^d}\mathbb P(\|MZ-w\|_2\le \varepsilon)\le \frac{O(\eps^2)^{d_0}}{\det(M_IM_I^*)} \,.
\end{equation}
\end{lemma}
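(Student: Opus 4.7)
The plan is to reduce the small-ball estimate for $MZ\in\C^{d}$ to one for the $|I|\times d'$ submatrix $M_I$, and then use that $M_IZ$ is a non-degenerate complex Gaussian vector in $\C^{d_0}$ with an explicit bounded density.

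First I would project to the coordinates indexed by $I$. For any $w\in\C^{d}$, writing $w_I\in\C^{d_0}$ for its restriction to $I$, we have the deterministic inequality
\[
\|MZ-w\|_2\ \ge\ \|M_IZ-w_I\|_2,
\]
so it suffices to prove
\[
\sup_{y\in\C^{d_0}}\P\bigl(\|M_IZ-y\|_2\le\eps\bigr)\ \le\ \frac{O(\eps^2)^{d_0}}{\det(M_IM_I^*)}.
\]

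Next I would exploit the Gaussian structure. Since $Z$ is a standard complex Gaussian vector in $\C^{d'}$ and $M_I$ has full row rank $d_0$, the random vector $M_IZ$ is a non-degenerate centered complex Gaussian in $\C^{d_0}$ with covariance matrix $\Sigma:=M_IM_I^*\in\Mat_{d_0}(\C)$, which is Hermitian positive definite by the rank hypothesis. Its density with respect to Lebesgue measure on $\C^{d_0}\cong\R^{2d_0}$ is
\[
f(y)\ =\ \frac{1}{\pi^{d_0}\det\Sigma}\,\exp\bigl(-y^*\Sigma^{-1}y\bigr)\ \le\ \frac{1}{\pi^{d_0}\det(M_IM_I^*)}.
\]

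Finally I would integrate this uniform density bound over the $\eps$-ball:
\[
\P\bigl(\|M_IZ-y\|_2\le\eps\bigr)\ \le\ \frac{1}{\pi^{d_0}\det(M_IM_I^*)}\cdot\Leb\bigl(\ball_\eps^{\C^{d_0}}\bigr)\ =\ \frac{\eps^{2d_0}}{d_0!\,\det(M_IM_I^*)},
\]
using the standard formula $\Leb(\ball_\eps^{\C^{d_0}})=\pi^{d_0}\eps^{2d_0}/d_0!$ for the volume of the Euclidean ball in $\R^{2d_0}$. This gives the desired bound with an absolute constant absorbed into the $O(\cdot)$ notation.

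There is no real obstacle: the only ingredients are the projection inequality, the explicit Gaussian density, and the volume of a Euclidean ball. The minor point to record carefully is that the hypothesis that $M_I$ has full row rank $d_0$ forces $d_0\le d'$ and guarantees $M_IM_I^*\succ 0$, which is exactly what is needed for the complex Gaussian density formula to be valid.
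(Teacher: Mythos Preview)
Your proof is correct and follows essentially the same approach as the paper: project to the coordinates indexed by $I$, then use boundedness of the Gaussian density. The only cosmetic difference is that the paper diagonalizes via the SVD of $M_I$ and bounds the ball event by a product of one-dimensional small-ball probabilities $\prod_j\P(|\sigma_j\zeta_j-w_j|\le\eps)$, whereas you write down the multivariate complex Gaussian density directly and integrate over the ball; both routes yield the same bound $O(\eps^2)^{d_0}/\det(M_IM_I^*)$.
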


\begin{proof}
By projecting $MZ$ to the coordinate subspace $\C^I$ we may assume $I=[d]$.
Let $M$ have singular value decomposition $U\Sigma V^*$ with $U$ and $V$ square unitary matrices of respective dimensions $d$ and $d'$, and with $\Sigma= \diag(\sigma_1(M), \dots, \sigma_d(M))$. We have
\begin{align*}
\sup_{w\in \mathbb C^d}\mathbb P(\|MZ-w\|_2\le \varepsilon) 
&= \sup_{w\in \mathbb C^d}\mathbb P(\|\Sigma V^*Z-U^*w\|_2\le \varepsilon)\\
&= \sup_{w\in \mathbb C^d}\mathbb P(\|\Sigma V^*Z-w\|_2\le \varepsilon) \\
&= \sup_{w\in \mathbb C^d}\mathbb P(\|\Sigma Z-w\|_2\le \varepsilon)
\end{align*}
where we used the invariance of the Euclidean norm and the distribution of $Z$ under unitary transformations of $\C^d$ and $\C^{d'}$, respectively. 
Now we have
\[
\P(\|\Sigma Z-w\|_2\le \varepsilon) = \pr \bigg( \sum_{j=1}^d |\sigma_j(M)\zeta_j-w_j|^2 \le \eps^2\bigg)
\le \prod_{j=1}^d \pr(  |\sigma_j(M)\zeta_j-w_j| \le \eps)
\]
and the latter quantity is $O(\eps)^{2d}/\prod_{j=1}^d \sigma_j(M)^2$ by the boundedness of the standard Gaussian density. 
The claim follows.
\end{proof}

\begin{lemma}[Tensorization of anti-concentration (cf.\ {\cite{RuVe:LO}})]
\label{lem:tensorize}
Suppose that $\xi_1,\dots, \xi_m$ are independent non-negative random variables such that, for some $\eps_1,\eps_2>0$, $\pr\{ \xi_i\le\eps_1\} \le \eps_2$ for all $1\le i\le m$.
Then
\[
\pr\Big\{\, \sum_{i=1}^m \xi_i^2 \le \eps_1^2 m \,\Big\} = O(\eps_2)^m.
\]
\end{lemma}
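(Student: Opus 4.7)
The plan is the classical Markov–Laplace tensorization argument.

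First, I would apply Markov's inequality after exponentiating. Since the event $\{\sum_i\xi_i^2\le\eps_1^2 m\}$ coincides with $\{\exp(-\sum_i\xi_i^2/\eps_1^2)\ge e^{-m}\}$, Markov gives
\[
\pr\Big\{\sum_{i=1}^m\xi_i^2\le\eps_1^2 m\Big\} \;\le\; e^m\,\E\exp\!\Big(-\sum_{i=1}^m\xi_i^2/\eps_1^2\Big),
\]
and independence of the $\xi_i$ factorizes the right-hand side as $e^m\prod_{i=1}^m\E\exp(-\xi_i^2/\eps_1^2)$.

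Second, I would bound each single-coordinate factor using only the one-threshold hypothesis. Splitting the expectation according to whether $\xi_i\le\eps_1$, and using $e^{-\xi_i^2/\eps_1^2}\le 1$ on the first event together with $e^{-\xi_i^2/\eps_1^2}\le e^{-1}$ on the complementary event, gives
\[
\E \exp(-\xi_i^2/\eps_1^2) \;\le\; \pr\{\xi_i\le\eps_1\}+ e^{-1}\pr\{\xi_i>\eps_1\} \;\le\; \eps_2 + e^{-1}.
\]
Combining the two steps yields $\pr\{\sum_i\xi_i^2\le\eps_1^2 m\}\le(e\eps_2+1)^m$, which is the asserted bound of the form $O(\eps_2)^m$.

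There is no substantive obstacle. The one subtlety worth flagging is that the single-threshold hypothesis is just enough for the crude two-piece Laplace bound above; a genuinely $\eps_2\to 0$ decaying tensorization would require the stronger integrated small-ball assumption $\pr\{\xi_i\le\eps\}\le C\eps$ for all $\eps>0$, estimated via the layer-cake representation $\E\exp(-\xi_i^2/\eps_1^2)=\int_0^1\pr\{\xi_i\le\eps_1\sqrt{-\log t}\}\,dt$, as in the sharper form of the Rudelson–Vershynin tensorization from \cite{RuVe:LO}.
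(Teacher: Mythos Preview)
Your Markov--Laplace argument is the standard one and correctly establishes $\pr\{\sum_i\xi_i^2\le\eps_1^2 m\}\le(1+e\eps_2)^m$. But this is not the stated conclusion: under the paper's conventions $O(\eps_2)^m$ denotes a bound $(C\eps_2)^m$ with $C$ absolute, whereas $(1+e\eps_2)^m$ does not decay as $\eps_2\to0$. You yourself flag this tension in your final paragraph, and indeed the single-threshold hypothesis is genuinely too weak for the stated conclusion. For a counterexample, let $\xi_i\in\{0,2\eps_1\}$ be i.i.d.\ with $\pr\{\xi_i=0\}=\eps_2$; then $\pr\{\xi_i\le\eps_1\}=\eps_2$ exactly, yet $\sum_i\xi_i^2\le m\eps_1^2$ only forces at least $3m/4$ of the $\xi_i$ to vanish, an event of probability at least $\binom{m}{\lceil 3m/4\rceil}\eps_2^{\lceil 3m/4\rceil}(1-\eps_2)^{\lfloor m/4\rfloor}$, which for fixed $m\ge4$ and $\eps_2\to0$ exceeds $(C\eps_2)^m$ for every fixed $C$.

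The paper gives no proof here, only the citation to \cite{RuVe:LO}, and the lemma is in fact slightly misstated relative to what that reference proves. In the only place it is invoked (the proof of \Cref{lem:mat-ac}), the variables $\xi_j=\|\tUU^*\col_j(\tLLz)\|_\HS$ actually satisfy the all-scales bound $\pr\{\xi_j\le t\}\ls t^{2d}$ for \emph{every} $t>0$ (this is what \Cref{lem:anti-vec} gives), which is precisely the Rudelson--Vershynin hypothesis under which the layer-cake computation you sketch yields $\E e^{-\xi_j^2/\eps_1^2}\ls\eps_1^{2d}$ and hence the genuine $(C\eps_2)^m$ bound the applications need. So your instinct in the closing paragraph is correct: the remedy is to state the lemma with the stronger hypothesis, not to look for a sharper argument under the single-threshold one.
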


We now prove \Cref{lem:mat-ac}.
Since $\UU\in \good(\pssv)$ we have $\smin(\RR)= \smin(U^0)\ge \pssv$, and hence
\begin{equation}
\det(\RR^*\RR) \gs_{\pol,\pssv}1.
\end{equation}
Letting $\eps>0$ be arbitrary,
from \eqref{def:walkvec} and \Cref{lem:anti-vec} with $M= \Flat^*$, $M_I=\RR^*$ we obtain 
\[
\sup_{M\in\Mat_{\DD}(\C)} \pr \big\{\, \| M + \Walk(\UU) \|_\HS \le \eps N^{-1/2} \,\big\} =O_{\pol,\pssv}(\eps^2)^{\rnk(\rnk+1)}.
\]
In particular, from \eqref{jwalk} we have that for each $2\le j\le N$,
\[
\pr \big\{\, \|\tUU^*\col_j(\tLLz) \|_\HS \le \eps N^{-1/2} \,\big\}  = O_{\pol,\pssv}(\eps^2)^{\rnk(\rnk+1)}.
\]
Since the columns $\col_j(\tLLz)$ are independent, from \Cref{lem:tensorize} we obtain
\[
\pr \{ \, \|\UU^*\LLz_{(1)} \|_\HS \le \eps \,\}
= \pr \bigg\{ \, \sum_{j=1}^N \|\tUU^*\col_j(\tLLz)\|_\HS^2 \le \eps^2 \,\bigg\}
=O_{\pol,\pssv}(\eps^2)^{\rnk(\rnk+1)(N-1)}
\]
which is \eqref{flat.bound0}.

For \eqref{mat-ac2}, under the assumptions $\smin(U^0)\ge \pssv$ and \eqref{flat:interest}, with $I= \{ k_1,\dots, k_r\}$ we have
\[
\det(\Flat_I^*\Flat_I) \ge \det(\RR^*\RR) \cdot \pflat^{2r} \gs_{\pol,\pssv,\pflat}1,
\]
(recalling the notation around \eqref{def:subspi} and using the fact that 
\[ \det(\Flat_{I_{i+1}}^*\Flat_{I_{i+1}}) = |\dist(\bq_{k_{i+1}}, \subsp_{ I_i})|^2 \det(\Flat_{I_{i}}^*\Flat_{I_{i}}) \]
 for $I_i = \{k_1,...,k_{i} \}$) and the proof concludes by following the same lines as we did for \eqref{flat.bound0}.
\qed

\subsection{Proof of \Cref{lem:fullrank}: Reduction to matrix walks of full rank}
\label{sec:struct.rank}

We will argue iteratively, incrementing the rank parameter $r$ in \eqref{flat:interest} from 0 to $\rnk$.
For brevity, in this section we denote the dilated Hilbert--Schmidt ball
\[
\BALL := 2\sqrt{\rnk+1}\cdot \matball{[0,\rnk]\times[N]}{[0,\rnk]}
\]
(recall our notation $\matball{S}{T}\subset\matt{S}{T}{\C}$ for the closed unit Hilbert--Schmidt ball). Note that $\UU_1\in \BALL$ almost surely since its $\rnk+1$ columns are unit vectors.
For $\pflat_1>0$ let
\begin{equation}
\badR_\emptyset(\pflat_1) := \big\{\, \UU \in \BALL : \dist(\bq_k, \subsp_\emptyset) <\pflat_1 \; \forall k\in [0,\rnk] \,\big\}
\end{equation}
and for $\pflat_0\ge\pflat_1>0$, $1\le r\le \rnk$ 
and
$k_1,\dots, k_r\in [0,\rnk]$ distinct, 
\begin{align}
\badR_{(k_1,\dots, k_r)} (\pflat_0) &:= \big\{\, \UU \in \BALL  : \dist(\bq_{k_i}, \subsp_{\{k_1,\dots, k_{i-1}\}}) \ge \pflat_0 \quad \forall \;1\le i\le r \, \big\}\, \,		\label{def:badRk}\\
\tbad_{(k_1,\dots, k_r)} (\pflat_0,\pflat_1) &:= \badR_{(k_1,\dots, k_r)}(\pflat_0) \setminus \bigcup_{k_{r+1}\notin \{ k_1,\dots,k_r\}} \badR_{(k_1,\dots, k_{r+1})}(\pflat_1).	\label{def:tbad}
\end{align}
(interpreting $\subsp_{\{k_1,\dots, k_{i-1}\}}$ as $\subsp_\emptyset$ for $i=1$). 

\begin{claim}	\label{claim:net1}
For any $\pssv\in (0,\frac12]$ there exists $c_0(\pol,\pssv)>0$ such that for any $\pflat_1\in (0,c_0]$, there is a set 
$\cN_\emptyset(\pflat_1)\subset \BALL\cap\good(\pssv)$ with
\begin{equation}
|\cN_\emptyset(\pflat_1)| \le O_\pol(1/\pflat_1^2)^{(\rnk+1)N }
\end{equation}
and such that for any $\UU\in \badR_\emptyset(\pflat_1)\cap \good(\pssv)$ there exists $\hUU\in \cN_\emptyset(\pflat_1)$ with
\begin{equation}
\| \UU - \hUU\|_\HS \ls_{\pol,\pssv} \pflat_1.
\end{equation}
\end{claim}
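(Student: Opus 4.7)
I would parametrize elements of $\badR_\emptyset(\pflat_1)\cap\good(\pssv)$ approximately by the pair $(U^0, T^\bullet)$ with $T^\bullet=(T^1,\dots,T^\rnk)\in\Mat_{\rnk+1}(\C)^\rnk$ encoding how each column of $Q^\ell$ (for $\ell\in[\rnk]$) is expressed in terms of the columns of $U^0$. The key point is that $U^0$ carries the leading $2(\rnk+1)N$ real dimensions, while the $T^\ell$'s contribute only an $N$-independent $O_\pol(1)$ further real dimensions. Thus an $\pflat_1$-net of the parameter space has size $(C/\pflat_1)^{2(\rnk+1)N+O_\pol(1)}$, which matches the desired bound $O_\pol(1/\pflat_1^2)^{(\rnk+1)N}$ after absorbing the $O_\pol(1)$-exponent contribution into the implicit constant (legitimate since $\pflat_1\le c_0(\pol,\pssv)$ can be shrunk and the resulting $\pflat_1^{-O_\pol(1)}$ factor becomes a $\pol,\pssv$-constant multiplier).

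\emph{Decoding.} First, I would unpack the structure. For $\UU\in\badR_\emptyset(\pflat_1)\cap\good(\pssv)$, the hypothesis $\dist(\bq_{k'},\subsp_\emptyset)<\pflat_1$ for each $k'\in[0,\rnk]$, viewed block by block, produces vectors $t_{k'}^\ell\in\C^{\rnk+1}$ (e.g.\ $t_{k'}^\ell:=(U^0)^+\col_{k'}(Q^\ell)$) satisfying $\|\col_{k'}(Q^\ell)-U^0 t_{k'}^\ell\|_2<\pflat_1$ for $\ell\in[\rnk]$ and $\|\col_{k'}(Q^\ell)\|_2<\pflat_1$ for $\ell>\rnk$. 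Assembling $T^\ell\in\Mat_{\rnk+1}(\C)$ with $\col_{k'}(T^\ell)=t_{k'}^\ell$, the bounds $\smin(U^0)\ge\pssv$ and $\|U^0\|_\HS\le 2\sqrt{\rnk+1}$ (from $\UU\in\BALL$) give $\|T^\ell\|_\HS=O_{\pol,\pssv}(1)$.

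\emph{Stable reconstruction.} Next, since $\slin_1,\dots,\slin_\rnk$ are orthogonal and nonzero, the linear map $(U^1,\dots,U^\rnk)\mapsto\big(\sum_{k=1}^\rnk \slin_{k,\ell}U^k\big)_{\ell\in[n]} = (Q^\ell-\slin_{0,\ell}U^0)_{\ell\in[n]}$ admits the explicit left inverse $U^k=\|\slin_k\|^{-2}\sum_\ell\overline{\slin_{k,\ell}}(Q^\ell-\slin_{0,\ell}U^0)$, of norm $O_\pol(1)$. Substituting $Q^\ell\leftrightarrow U^0 T^\ell$ (for $\ell\le\rnk$) and $Q^\ell\leftrightarrow 0$ (for $\ell>\rnk$) defines reconstructions $\hU^k = U^0\cdot M_k(T^\bullet)$ with $M_k$ affine in $T^\bullet$ and determined by $\pol$. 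The decoded $T^\bullet$ then satisfies $\|U^k-\hU^k\|_\HS=O_\pol(\pflat_1)$ by triangle-inequality accumulation of the residuals, and the map $(U^0,T^\bullet)\mapsto(\hU^1,\dots,\hU^\rnk)$ is jointly $O_{\pol,\pssv}(1)$-Lipschitz on the bounded parameter region (via $\|U^0 M_k(T^\bullet)-\hU^0 M_k(\wh T^\bullet)\|_\HS\le \|U^0-\hU^0\|_\HS\|M_k(T^\bullet)\|_\op+\|\hU^0\|_\op\|M_k(T^\bullet-\wh T^\bullet)\|_\op$).

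\emph{Net and counting.} Finally, I would take $\cN_0$ a $\pflat_1$-net for the HS-ball of radius $2\sqrt{\rnk+1}$ in $\Mat_{N,\rnk+1}(\C)$ (size $(C/\pflat_1)^{2(\rnk+1)N}$, by \Cref{lem:net}) and $\cN_T$ a $\pflat_1$-net for the HS-ball of radius $C_{\pol,\pssv}$ in $\Mat_{\rnk+1}(\C)$ (size $O_{\pol,\pssv}(1/\pflat_1)^{2(\rnk+1)^2}$). For each $(\hU^0,\wh T^1,\dots,\wh T^\rnk)\in\cN_0\times\cN_T^\rnk$ satisfying $\smin(\hU^0)\ge\pssv$, form $\hUU=(\hU^0,\hU^1,\dots,\hU^\rnk)$ via the reconstruction, truncate to $\BALL\cap\good(\pssv)$ if needed, and include in $\cN_\emptyset(\pflat_1)$. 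Combining the Lipschitz bound with the triangle inequality yields $\|\UU-\hUU\|_\HS\ls_{\pol,\pssv}\pflat_1$, and $|\cN_\emptyset|\le|\cN_0|\cdot|\cN_T|^\rnk$ gives the claimed size. The main obstacle I anticipate is carefully tracking how perturbations of $U^0$ propagate through the pseudoinverse-like reconstruction -- this is precisely where the quantitative well-conditioning $\smin(U^0)\ge\pssv$ enters -- and ensuring that the final truncation back to $\BALL\cap\good(\pssv)$ preserves the $O(\pflat_1)$ covering property.
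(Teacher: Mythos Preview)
Your approach is correct and essentially the same as the paper's. Both arguments net $U^0$ at scale $\pflat_1$ and exploit that, under the hypothesis $\dist(\bq_k,\subsp_\emptyset)<\pflat_1$, the remaining blocks $U^1,\dots,U^\rnk$ are determined up to error $O_{\pol}(\pflat_1)$ by an $O_\pol(1)$-dimensional parameter, which is then netted separately and fed through the pseudoinverse reconstruction $\UU=(\tSlin^+\otimes\id_N)\binom{U^0}{\QQ}$. The paper nets the projection of $\bq_k$ onto the $\rnk(\rnk+1)$-dimensional subspace $\subsp^{(0)}(\hU^0)$ directly and invokes the stability lemma (\Cref{lem:stab}) to transfer distance bounds from $\subsp^{(0)}(U^0)$ to $\subsp^{(0)}(\hU^0)$; you instead record the coordinates $T^\ell=(U^0)^+Q^\ell$ explicitly and use a direct Lipschitz estimate for $(U^0,T^\bullet)\mapsto U^0 M_k(T^\bullet)$, which is a clean alternative to \Cref{lem:stab} in this $r=0$ case.

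One small point: your parenthetical justification for absorbing the extra $\pflat_1^{-O_\pol(1)}$ factor into the stated bound is backwards---shrinking $\pflat_1$ makes that factor larger, not smaller. The paper glosses over the same additive $O_\pol(1)$ in the exponent; the honest statement is $|\cN_\emptyset|\le O_\pol(1/\pflat_1^2)^{(\rnk+1)N+O_\pol(1)}$, which is all that the downstream application in \Cref{lem:fullrank} requires (there $\pflat_1=\beta_0$ is ultimately fixed as a constant depending on $\pol,\pssv,B$). Also, when you ``truncate to $\BALL\cap\good(\pssv)$'', note that filtering net points by $\smin(\hU^0)\ge\pssv$ can discard the approximant of some $\UU\in\good(\pssv)$ (you only know $\smin(\hU^0)\ge\pssv-\pflat_1$); the standard fix---replacing $\hUU$ by a nearest element of $\BALL\cap\good(\pssv)$, which at most doubles the covering error---works here since the reconstruction is Lipschitz.
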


\begin{claim}	\label{claim:net2}
For any $\pssv,\pflat_0\in(0,\frac12]$ there exists $c_1(\pol,\pssv,\pflat_0)>0$ such that for any $\pflat_1\in (0,c_1]$, $1\le r\le \rnk$ and distinct $k_1,\dots, k_r\in \DD$, there is a set 
$\cN_{(k_1,\dots, k_r)}(\pflat_0,\pflat_1)\subset \badR_{(k_1,\dots,k_r)}(\pflat_0/2)\cap\good(\pssv)$ with
\begin{equation}	\label{netr.size}
|\cN_{(k_1,\dots, k_r)}(\pflat_0,\pflat_1)| \le O_\pol(1/\pflat_1^2)^{((r+1)\rnk+1)N }
\end{equation}
and such that for any $\UU\in \tbad_{(k_1,\dots,k_r)}(\pflat_0,\pflat_1)\cap\good(\pssv)$ there exists $\hUU\in \cN_{(k_1,\dots, k_r)}(\pflat_0,\pflat_1)$ with
\begin{equation}	\label{UUclose}
\| \UU - \hUU\|_\HS \ls_{\pol,\pssv,\pflat_0} \pflat_1.
\end{equation}
\end{claim}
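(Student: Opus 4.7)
The plan is to exploit the structure imposed by $\tbad_{(k_1,\dots,k_r)}(\pflat_0,\pflat_1)\cap\good(\pssv)$ to parametrize $\UU$ by three independent pieces of continuous data, net each piece, and then reconstruct $\UU$ via a fixed least-squares formula. First, for each $(i,k)\in[N]\times\DD$, the identity $\bq_k(i,\ell)-\slin_{0,\ell}\,U^0(i,k)=\sum_{j=1}^\rnk\slin_{j,\ell}\,U^j(i,k)$ (over $\ell\in[n]$) is a linear system for $(U^j(i,k))_{j\in[\rnk]}$ whose coefficient matrix has columns $\slin_1,\dots,\slin_\rnk$. By \Cref{cor:linearize} these are mutually orthogonal and nonzero, so this matrix admits a least-squares pseudoinverse of operator norm $O_\pol(1)$, and the reconstruction $(\hat U^0,(\hat\bq_k)_{k\in\DD})\mapsto \hUU$ obtained by plugging in arbitrary approximations is $O_\pol(1)$-Lipschitz in the Hilbert--Schmidt norm.

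Next, the hypotheses $\good(\pssv)$ and the $\pflat_0$-separations defining $\badR_{(k_1,\dots,k_r)}$ ensure that $\subsp_{\{k_1,\dots,k_r\}}$ has dimension exactly $\rnk(\rnk+1)+r$ (constant in $N$) with a basis of the form $\RR(U^0)\cup\{\bq_{k_i}\}_{i=1}^r$ whose Gram determinant is bounded below in terms of $\pol,\pssv,\pflat_0$; moreover the $\tbad$ hypothesis gives $\dist(\bq_k,\subsp_{\{k_1,\dots,k_r\}})<\pflat_1$ for every $k\notin\{k_1,\dots,k_r\}$. Accordingly, I parametrize $\UU$ by: (a) the matrix $U^0$, lying in the Hilbert--Schmidt ball of radius $2\sqrt{\rnk+1}$ in $\matt{[N]}{\DD}{\C}$ and satisfying $\smin(U^0)\ge\pssv$; (b) for each $i\in[r]$, the $\rnk$ column vectors $\col_{k_i}(U^j)\in\C^N$, $j\in[\rnk]$, from which $\bq_{k_i}$ is assembled through the linearization formula; (c) for each $k\notin\{k_1,\dots,k_r\}$, the projection $\bq_k^\parallel$ of $\bq_k$ onto $\subsp_{\{k_1,\dots,k_r\}}$, which already lies within $\pflat_1$ of $\bq_k$.

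Applying \Cref{lem:net} at scale proportional to $\pflat_1$: piece (a) needs $O_\pol(1/\pflat_1^2)^{(\rnk+1)N}$ net points; piece (b) needs $O_\pol(1/\pflat_1^2)^{r\rnk N}$ in total; and piece (c), once (a) and (b) have fixed the subspace, needs $O_{\pol,\pssv,\pflat_0}(1/\pflat_1^2)^{O_\pol(1)}$ points, a count independent of $N$. Multiplying and absorbing the $N$-independent $(\pssv,\pflat_0)$-dependent prefactor into a small additional $\pflat_1$-loss (permissible since $\pflat_1\le c_1(\pol,\pssv,\pflat_0)$) yields the claimed bound $|\cN|\le O_\pol(1/\pflat_1^2)^{((r+1)\rnk+1)N}$.

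Finally, to ensure the net points lie in $\badR_{(k_1,\dots,k_r)}(\pflat_0/2)\cap\good(\pssv)$ rather than merely close to it, I apply the standard replacement trick: each raw candidate is replaced by any element of $\badR_{(k_1,\dots,k_r)}(\pflat_0/2)\cap\good(\pssv)$ within $O_\pol(\pflat_1)$ Hilbert--Schmidt distance, which exists whenever the candidate approximates some $\UU\in\tbad_{(k_1,\dots,k_r)}(\pflat_0,\pflat_1)\cap\good(\pssv)$ because $\UU$ itself serves as a witness once $c_1$ is small enough. The main obstacle will be carefully tracking error propagation through the least-squares inversion and the projection in (c), and verifying that the $(\pssv,\pflat_0)$-dependent constants arising from the conditioning of the basis of $\subsp_{\{k_1,\dots,k_r\}}$ can indeed be absorbed into a $\pol$-only base constant in the final count \eqref{netr.size}.
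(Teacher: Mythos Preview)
Your proposal is correct and follows essentially the same route as the paper: parametrize $\UU$ by $(U^0,(\bu'_{k_i})_{i\le r},(\bq_k)_{k\notin I_r})$ via the pseudoinverse of $\tSlin$, net the first two pieces directly in their ambient spaces, and net each $\bq_k$ for $k\notin I_r$ inside the $(\rnk(\rnk+1)+r)$-dimensional subspace spanned by the netted versions of the first two pieces. The obstacle you flag---propagating the bound $\dist(\bq_k,\subsp_{I_r}(\UU))<\pflat_1$ to a comparable bound for the perturbed subspace $\subsp_{I_r}(\hUU)$---is handled in the paper by an elementary stability lemma for distances to subspaces under perturbation of a well-conditioned spanning set (\Cref{lem:stab}), using precisely the $(\pssv,\pflat_0)$-conditioning of the basis that you anticipate.
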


We now conclude the proof of \Cref{lem:fullrank} on the above claims.
By the same lines we used in the proof of \Cref{prop:struct} (cf.\ \eqref{prop7.net1}--\eqref{prop7.net2}) we have from \Cref{claim:net1} and \eqref{flat.bound0} that for $\beta_0\le c_0(\pol,\pssv)$, 
\begin{align*}
\pr\{\UU_1 \in \badR_\emptyset( \beta_0)\cap \good(\pssv) \;\wedge\; \|\LLz\|_\op\le \bound\}
&\le |\cN_\emptyset(\beta_0)| O_{\pol,\pssv}(\bound^2\beta_0^2)^{\rnk(\rnk+1)(N-1)}\\
&\le O_{\pol,\pssv,B}(1)^N O(\beta_0^2)^{[\rnk(\rnk+1) - (\rnk+1)]N-O_\pol(1)}.
\end{align*}
Since $\rnk(\rnk+1) - (\rnk+1) = \rnk^2-1 \ge 1$ (recall our assumption that $\rnk\ge2$), it follows that there exists $\beta_0=\beta_0(\pol,\pssv,\bound)>0$ such that
\begin{equation}	\label{Bzero}
\pr\{\UU_1 \in \badR_\emptyset( \beta_0)\cap \good(\pssv) \;\wedge\; \|\LLz\|_\op\le \bound\}
\ls_{\pol} e^{-N}.
\end{equation}
Similarly, from \Cref{claim:net2} and \eqref{flat:interest}, for any $\gamma_0\in (0,\frac12]$ and $\gamma_1\le c_1(\pol,\pssv,\gamma_0)$, any $1\le r\le \rnk$ and distinct $k_1,\dots,k_r\in [0,\rnk]$,
\begin{align*}
&\pr\{\UU_1 \in \tbad_{(k_1,\dots, k_r)}( \gamma_0,\gamma_1)\cap \good(\pssv) \;\wedge\; \|\LLz\|_\op\le \bound\}\\
&\qquad\qquad\le |\cN_{(k_1,\dots, k_r)}(\gamma_0,\gamma_1)| O_{\pol,\pssv,\gamma_0}(\bound^2\gamma_1^2)^{ [\rnk(\rnk+1)+r](N -1)}\\
&\qquad\qquad\le O_{\pol,\pssv,\bound,\gamma_0} (1)^N O(\gamma_1^2)^{[\rnk(\rnk+1) + r - (r+1)\rnk-1] N - O_\pol(1)}.
\end{align*}
Since $\rnk(\rnk+1) + r - (r+1)\rnk-1 = \rnk(\rnk-r) + r-1 \ge 1$ for all $\rnk,r$ with $1\le r\le \rnk$ and $\rnk\ge 2$, it follows that there exists 
$\gamma_1(\pol,\pssv,\bound,\gamma_0)\in (0,\gamma_0]$ such that
\begin{equation}	\label{Bstep}
\pr\{\UU_1 \in \tbad_{(k_1,\dots, k_r)}( \gamma_0,\gamma_1)\cap \good(\pssv) \;\wedge\; \|\LLz\|_\op\le \bound\} \ls_{\pol} e^{-N}.
\end{equation}
Now we recursively obtain a sequence $\beta_0\ge \beta_1\ge \cdots\ge \beta_\rnk=:\pflat_\star>0$ with $\beta_0=\beta_0(\pol,\pssv,\bound)$ as in \eqref{Bzero}, and $\beta_k=\gamma_1(\pol,\pssv,\bound,\beta_{k-1})$ for $1\le k\le \rnk$. 
We cover
\begin{align*}
&\badR(\pflat_\star)\cap \BALL \subseteq\\
&\qquad\badR_\emptyset(\beta_0)\, \cup \bigcup_{\substack{k_1,\dots, k_\rnk\in [0,\rnk] \\\text{ distinct}}}
\tbad_{(k_1)}(\beta_0,\beta_1) \,\cup\, \tbad_{(k_1,k_2)}(\beta_1,\beta_2)\, \cup\,\cdots \,\cup\,\tbad_{(k_1,\dots, k_\rnk)}(\beta_{\rnk-1}, \beta_\rnk).
\end{align*}
Applying the union bound followed by \eqref{Bzero} and \eqref{Bstep} with $(\gamma_0,\gamma_1) = (\beta_{k-1},\beta_k)$ for $1\le k\le \rnk$, we obtain 
\begin{align*}
&\pr\{\, \UU_1\in \badR(\pflat)\cap \good(\pssv) \; \wedge\; \|\LLz\|_\op\le \bound \,\} \ls_\pol e^{-N}
\end{align*}
for any $0<\pflat\le \pflat_\star$. 
\qed\\

It remains to prove Claims \ref{claim:net1} and \ref{claim:net2}.
The proof of \Cref{claim:net1} is essentially contained in that of \Cref{claim:net2} -- we give the proof of the latter and describe at the end which steps can be skipped to establish the former.

We will use the following elementary lemma.

\begin{lemma}[Stability of distances to subspaces]
\label{lem:stab}
Let $1\le k\le m$.
Let $b,a_1,\dots, a_k,$ $b', a'_1,\dots, a'_k\in \C^m$ and $A=(a_1,\dots, a_k)$.
Suppose $\smin(A) \ge \alpha>0$, $\|b-b'\|\le \eps$, and $\|a_i-a'_i\|_2\le \eps$ for each $i\in [k]$. If $\eps\le \frac12k^{-1/2}\alpha$, then
\[
\big| \dist(b, \Span(a_1,\dots, a_k) ) - \dist(b', \Span(a'_1,\dots, a'_k) \big| 
\ls  (1+\|b\|_2)k\eps/\alpha.
\]
\end{lemma}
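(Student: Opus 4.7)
The natural approach is to reduce the statement to a perturbation estimate for the orthogonal projections onto the column spans, and then exploit a simple commutator-type identity to get a clean bound.

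Write $P_A$ and $P_{A'}$ for the orthogonal projections of $\C^m$ onto $\Span(a_1,\dots,a_k)$ and $\Span(a'_1,\dots,a'_k)$ respectively. Since $\dist(b,\Span A)=\|(I-P_A)b\|_2$, the triangle inequality gives
\[
\big|\dist(b,\Span A)-\dist(b',\Span A')\big| \;\le\; \|(I-P_A)(b-b')\|_2 + \|(P_A-P_{A'})b\|_2 \;\le\; \eps + \|P_A-P_{A'}\|_{\op}\,\|b\|_2.
\]
Thus everything reduces to controlling $\|P_A-P_{A'}\|_{\op}$ in terms of $\|A-A'\|_{\op}$ and $\smin(A)$.

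Bundle the $a_i$'s into the matrix $A$ and likewise for $A'$, and observe that $\|A-A'\|_{\op}\le\|A-A'\|_{\HS}\le\sqrt{k}\,\eps\le\alpha/2$ by hypothesis. By Weyl's inequality for singular values, $\smin(A')\ge\alpha/2$, so the Moore--Penrose pseudoinverse $A'^{+}=(A'^{*}A')^{-1}A'^{*}$ exists and satisfies $\|A'^{+}\|_{\op}\le 2/\alpha$; the analogous bound $\|A^{+}\|_{\op}\le 1/\alpha$ holds by hypothesis. Now write
\[
P_A-P_{A'} \;=\; (I-P_{A'})P_A \;-\; P_{A'}(I-P_A),
\]
and exploit the identity $(I-P_{A'})A=(I-P_{A'})(A-A')$ (since $(I-P_{A'})A'=0$) to get
\[
(I-P_{A'})P_A \;=\; (I-P_{A'})A A^{+} \;=\; (I-P_{A'})(A-A')A^{+},
\]
whose operator norm is at most $\|A-A'\|_{\op}/\alpha\le\sqrt{k}\,\eps/\alpha$. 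The symmetric argument applied to $P_{A'}(I-P_A)=A'A'^{+}(I-P_A)=(A'-A)A'^{+}(I-P_A)\cdot(\text{after transposing})$ (more precisely, one repeats the manipulation with the roles of $A$ and $A'$ swapped, using $P_A A=A$ to get $P_{A'}(I-P_A) = (A'A'^{+}-A A^{+})\cdot\text{(…)}$, or equivalently one dualises via $(P_A-P_{A'})^{*}=P_A-P_{A'}$) yields the bound $\|P_{A'}(I-P_A)\|_{\op}\le\|A-A'\|_{\op}\|A'^{+}\|_{\op}\le 2\sqrt{k}\,\eps/\alpha$. Adding these gives $\|P_A-P_{A'}\|_{\op}\lesssim\sqrt{k}\,\eps/\alpha$.

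Substituting back into the display above yields
\[
\big|\dist(b,\Span A)-\dist(b',\Span A')\big| \;\lesssim\; \eps \,+\, \sqrt{k}\,\eps\,\|b\|_2/\alpha,
\]
and since $\eps\le\alpha/(2\sqrt{k})\le\alpha$ forces $\eps\le k\eps/\alpha$ when $\alpha\le k$ (otherwise $\alpha\le 1$ gives the same bound a fortiori up to constants), one absorbs the two terms into the single expression $(1+\|b\|_2)k\eps/\alpha$ claimed by the lemma. There is no real obstacle here; the only mildly subtle point is the commutator identity $(I-P_{A'})A=(I-P_{A'})(A-A')$, which is what lets the bound depend on $\|A-A'\|_{\op}$ rather than on the potentially unbounded norm $\|A\|_{\op}$.
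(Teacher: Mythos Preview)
Your approach is correct and genuinely different from the paper's. The paper argues by telescoping: it replaces the columns $a_i$ by $a_i'$ one at a time, and at each step bounds the change in $\dist(b,\cdot)$ via the wedge-product formula $\dist(\tilde b,\langle \tilde a_j\rangle) = \|\tilde b\wedge\tilde a_j\|_2/\|\tilde a_j\|_2$ after projecting off the span of the remaining columns; the key input there is the estimate $\dist(a_j,V)\ge\smin(A^{(j)})\ge\smin(A)/2$. Summing the $k$ telescoping steps produces the factor $k$. Your route via the projection perturbation bound $\|P_A-P_{A'}\|_{\op}\lesssim\sqrt{k}\,\eps/\alpha$ is more direct and in fact yields the sharper dependence $\sqrt{k}\,\eps\|b\|_2/\alpha$ in place of the paper's $k\,\eps\|b\|_2/\alpha$.

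Two minor comments. First, your treatment of the term $P_{A'}(I-P_A)$ is garbled as written; the clean argument is precisely the dualisation you mention parenthetically: since $P_A,P_{A'}$ are self-adjoint,
\[
\|P_{A'}(I-P_A)\|_{\op}=\|(I-P_A)P_{A'}\|_{\op}=\|(I-P_A)(A'-A)(A')^+\|_{\op}\le 2\sqrt{k}\,\eps/\alpha.
\]
Second, your final absorption of the bare $\eps$ term into $(1+\|b\|_2)k\eps/\alpha$ tacitly requires $\alpha\lesssim k$, which the hypotheses do not guarantee (take $k=1$, $a_1=a_1'$ of huge norm, $b=0$, $b'$ orthogonal to $a_1$ with $\|b'\|=\eps$). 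But the paper's proof has exactly the same gap --- its opening ``from the triangle inequality it suffices to show \eqref{dist.bd}'' drops the same $\eps$ --- and in every application of the lemma the columns $a_i$ have norm $O_\pol(1)$, so $\alpha=O_\pol(1)$ and the issue is immaterial.
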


\begin{proof}
From the triangle inequality it suffices to show
\begin{equation} \label{dist.bd}
\big| \dist(b, \Span(a_1,\dots, a_k) ) - \dist(b, \Span(a'_1,\dots, a'_k) \big| 
\ls k \eps\|b\|_2/\alpha.
\end{equation}
Set $A^{(0)} = A$ and for each $1\le j\le k$ let $A^{(j)}$ be obtained by replacing the first $j$ columns of $A$ with $a_1',\dots, a_j'$, so that $A^{(k)} = A'$. 
By expanding the left hand side of \eqref{dist.bd} as a telescoping sum and applying the triangle inequality, we see it suffices to show 
\begin{equation}
|\dist(b, \Span(A^{(j)})) - \dist(b, \Span(A^{(j-1)}))| \le \frac{4 \|b\|_2\eps}{\smin(A)}
\end{equation}
for each $1\le j\le k$. 

Fix such a $j$, and let $V$ be the span of all columns of $A^{(j)}$ but the $j$th one. Let $\tilde b, \tilde a_j, \tilde a_j'$ denote the projections of $b, a_j, a_j'$ to $V^\perp$. Then the left hand side above is
\begin{align*}
|\dist(\tilde b, \langle \tilde a_{k}\rangle)-\dist(\tilde b, \langle \tilde a_{k}'\rangle)|&
= \bigg| \frac{ \|\tilde b\wedge\tilde a_j \|_2}{ \|\tilde a_j\|_2} -  \frac{ \|\tilde b\wedge\tilde a_j' \|_2}{ \|\tilde a_j'\|_2} \bigg| \\
&\le \frac{1}{\|\tilde a_j\|_2} \big| \|\tilde b \wedge \tilde a_j\|_2 -\|\tilde b\wedge\tilde a_j'\|_2 \big|
+  \bigg| \frac{\|\tilde a_j'\|_2}{\|\tilde a_j\|_2} -1 \bigg|  \frac{ \|\tilde b\wedge\tilde a_j' \|_2}{ \|\tilde a_j'\|_2}  \\
&\le \frac{1}{\|\tilde a_j\|_2}\|\tilde b\|_2 \eps + \frac{\eps}{\|\tilde a_j\|_2} \|\tilde b\|_2 \le \frac{2\eps \|b\|_2}{\|\tilde a_j\|_2}.
\end{align*}
Noting that $\|\tilde a_j\|_2 = \dist(a_j, V)$, we see it suffices to show
\begin{equation}	\label{dist.goal1}
\dist(a_j, V) \ge \smin(A)/2.
\end{equation}
Now for some $c_1,\dots, c_{j-1}, c_{j+1}, \dots, c_k\in \C$ we have
\[
\dist(a_j, V) = \Big\| a_j - \sum_{i\ne j} c_i \col_i(A^{(j)}) \Big\|_2 \ge \smin(A^{(j)}) \Big( 1+ \sum_{i\ne j} |c_i|^2 \Big)^{1/2} \ge \smin(A^{(j)}).
\]
Now since $\|A^{(j)}-A\|_\op \le \|A^{(j)}-A\|_\HS \le \eps\sqrt{k}$ for all $1\le j\le k$, 
\[
\smin(A^{(j)}) \ge \smin(A) - \|A^{(j)}-A\|_\op \ge \smin(A) - \eps\sqrt{k} 
\]
and \eqref{dist.goal1} follows from the previous two displays and our assumption on $\eps$. 
\end{proof}

\begin{proof}[Proof of \Cref{claim:net2}]
The key property of $\Flat=\Flat(\UU)$ is that the columns of $\QQ$ are determined by those of $\UU$. Indeed, let $\Slin\in \matt{[\num]}{[0,\rnk]}{\C}$ have columns $\slin_0,\dots,\slin_\rnk$ (cf.\ \eqref{def:Li}) and let $\tSlin\in \matt{[0,\num]}{ [0,\rnk]}{\C}$ be the result of adding the standard basis vector $e_1=(1,0,\dots, 0)\in \C^{\num}$ as zeroth row. 
We have
\begin{equation}	\label{QU}
\begin{pmatrix} U^0\\ \QQ \end{pmatrix} = (\tSlin\otimes \id_N) \UU\,, \qquad \text{ in particular }\; \QQ = (\Slin\otimes \id_N) \UU.
\end{equation}
Since $\slin_1,\dots, \slin_\rnk$ are linearly independent, it follows that $\tSlin$ has full column rank, so 
\begin{equation}	\label{def:tS+}
\UU = (\tSlin^+\otimes \id_N) \begin{pmatrix} U^0\\ \QQ \end{pmatrix} 
\end{equation}
where $\tSlin^+ = (\tSlin^*\tSlin)^{-1}\tSlin^*$ is the Moore--Penrose pseudoinverse of $\tSlin$. 

For $U^0= (u_0^0, \dots, u_\rnk^0)\in \matt{[N]}{ [0,\rnk]}{\C}$ 
and $\bu'_1,\dots, \bu'_r\in \C^{[\rnk]\times[N]}$ with $r\le \rnk$, we write 
$\subsp^{(r)}(U^0, \bu'_1,\dots, \bu'_r)$ for the span of the columns of $\RR(U^0)$ together with $(S\otimes \id_N)\bu_1,\dots, (S\otimes \id_N)\bu_r$, where 
\[
\bu_k = \begin{pmatrix} u^0_k\\ \bu'_k \end{pmatrix}.
\]
In particular, for $I=\{k_1,\dots, k_r\}\subset[0,\rnk]$ we have
\begin{equation}	\label{Wk1}
\subsp_I(\UU) = \subsp^{(r)}(U^0, \bu'_{k_1},\dots,\bu'_{k_r}).
\end{equation} 
We introduce this notation to make it clear that this subspace is fixed by the partial data $U^0, \bu'_{k_1},\dots,\bu'_{k_r}$, which will be crucial for the net construction.

For any $\pflat\in (0,1/2]$ we let $\Sigma_0(\pflat,\pssv)$ be a $\pflat$-net 
for the set 
\[
\big\{\, U\in 2\sqrt{\rnk+1}\cdot\matball{[N]}{[0,\rnk]} : \smin(U) \ge \alpha\,\big\}
\]
and let $\Sigma_1(\pflat)$ be a $\pflat$-net for $\ball^{[\rnk]\times[N]}$ (the unit ball in $\C^{[\rnk]\times[N]}$). 
(Both nets are taken with respect to the Euclidean metric.)
For each $\hU^0\in \Sigma_0(\pflat, \pssv)$, $\hbu'_1,\dots, \hbu'_r\in \Sigma_1(\pflat)$ and any $R\ge1$, we let $\Sigma^{(r)}_{\hU^0,\hbu'_1,\dots,\hbu'_r}(\pflat,R)$ be a (Euclidean) $\pflat$-net for the ball of radius $R$ in $\subsp^{(r)}(\hU^0, \hbu'_1,\dots,\hbu'_r)$.
For $r=0$ we let $\Sigma^{(0)}_{\hU^0}(\pflat,R)$ be a $\pflat$-net for the ball of radius $R$ in $\subsp^{(0)}(\hU^0)$.
By \Cref{lem:net} we may choose these nets so that
\begin{align}	
|\Sigma_0(\pflat,\pssv)| &\le O_\pol(1/\pflat^2)^{(\rnk+1) N}, \label{netbounds0}\\
|\Sigma_1(\pflat)| &\le O(1/\pflat^2)^{\rnk N}, \label{netbounds1}\\
|\Sigma^{(0)}_{\hU^0}(\pflat,R)|, \,|\Sigma^{(r)}_{\hU^0, \hbu'_1,\dots,\hbu'_r}(\pflat,R)| &\le O(R^2/\pflat^2)^{O_\pol(1)}. \label{netbounds2}
\end{align}

Now let $1\le r\le \rnk$ and  fix distinct $k_1,\dots, k_r\in [0,\rnk]$.
In the sequel we write $I_i= \{ k_1,\dots, k_i\}$.
Let $\cN$ be the set of $\hUU\in \BALL$ such that
\begin{equation}	\label{def:hU0}
\hU^0\in \Sigma_0(\pflat_1,\pssv), \qquad \hbu'_k \in \Sigma_1(\pflat_1) \quad \forall \, k\in I_r
\end{equation}
and
\begin{equation}
\hbu_k' \in \bigg\{ \, (\Slin'\otimes \id_N) \begin{pmatrix} \hu^0_k\\ \hbq_k \end{pmatrix} \,:\, \hbq_k \in \Sigma_{\hU^0, (\hbu'_k : k\in I_r)}(\pflat_1,R)\, \bigg\} \qquad \forall\, k\notin I_r,
\label{hbu.prime}
\end{equation}
with $R=O_\pol(1)$ to be taken sufficiently large, and 
where $\Slin'$ is the matrix $\tSlin^+$ with its first row removed (see \eqref{def:tS+}).

Fix an arbitrary $\UU\in \tbad_{(k_1,\dots, k_r)}(\pflat_0,\pflat_1)\cap \good(\pssv)$.
We claim there exists $\hUU\in \cN$ such that \eqref{UUclose} holds and $\hUU\in \badR_{(k_1,\dots, k_r)}(\pflat_0/2)$. From this the claim will follow by taking $\cN_{(k_1,\dots, k_r)}(\pflat_0,\pflat_1) = \cN\cap  \badR_{(k_1,\dots, k_r)}(\pflat_0/2)$, noting that $\hUU\in \good(\pssv)$ by \eqref{def:hU0}, and that the bound \eqref{netr.size} follows from \eqref{netbounds0}--\eqref{netbounds2}.

We first fix the following submatrices of $\hUU$: take $\hU^0\in \Sigma_0(\pflat_1,\pssv)$ and for each $k\in I_r$ take $\hbu'_k \in \Sigma_1(\pflat_1)$ such that 
\begin{equation}	\label{rank.easy}
\|U^0-\hU^0\|_2\le \pflat_1\quad\text{and}\quad \|\bu'_k - \hbu'_k\|_2 \le \pflat_1 \quad \forall k\in I_r.
\end{equation}
By the triangle inequality, it only remains to choose the vectors $\hbu'_k$ for $k\notin I_r$ so that
\begin{equation}	\label{rank.goal1}
\|\bu'_k - \hbu'_k\|_2 \ls_{\pol,\pssv,\pflat_0} \pflat_1
\end{equation}
and so that the resulting matrix $\hUU$ lies in $\badR_{(k_1,\dots, k_r)}(\pflat_0/2)$.

For arbitrary $k\in [0,\rnk]$, since $\bq_k = (\Slin\otimes \id_N)\bu_k$ and $\|\bu_k\|_2\le2\sqrt{\rnk+1}$, it follows that 
\[
\|\bq_k\|_2 \le 2\sqrt{\rnk+1}\|S\|_\op \ls_\pol1.
\]
For $k\in I_r$, with 
\[
\hbq_{k}:= (S\otimes \id_N)\hbu_{k}, \qquad \hbu_k = \begin{pmatrix} \hu^0_k\\ \hbu'_k \end{pmatrix},
\]
we similarly have
\begin{equation}
\|\bq_{k} - \hbq_{k}\|_2 \ls_\pol \pflat_1 \quad \forall \,k\in I_r.
\end{equation}
Since $\UU\in \tbad_{(k_1,\dots, k_r)}(\pflat_0,\pflat_1)$, 
\begin{equation}	\label{lb:qk1}
 \dist(\bq_{k_i} , \subsp_{I_{i-1}} ) =  \dist(\bq_{k_i} , \subsp^{(i-1)} (U^0,\bu'_{k_1},\dots, \bu'_{k_{i-1}})) \ge \pflat_0\qquad \forall \, 1\le i\le r
\end{equation}
and for any $k\notin I_r$, 
\begin{equation}	\label{ub:qk}
 \dist(\bq_k , \subsp_{I_r} ) = \dist( \bq_k, \subsp^{(r)}(U^0, \bu'_{k_1},\dots,\bu'_{k_r} )) \le \pflat_1
\end{equation}
(recalling \eqref{Wk1}).
Since $U^0\in \good(\pssv)$ we have $\smin(\RR)=\smin(U^0)\ge \alpha$. Together with \eqref{lb:qk1} this implies
\[
\smin( \Flat_{I_{i-1}}) \gs_{\pssv,\pflat_0} 1 \quad \forall \, 1\le i\le r
\]
(where $\Flat_{I_0}:= \RR$). 
This together with \eqref{rank.easy}, \Cref{lem:stab} and \eqref{lb:qk1},
and assuming $c_1(\pol,\pssv,\pflat_0)$ is sufficiently small,
implies 
\begin{equation}	\label{lb:hatq}
  \dist(\hbq_{k_i} , \subsp^{(i-1)} (\hU^0,\hbu'_{k_1},\dots, \hbu'_{k_{i-1}})) \ge \pflat_0 - O_{\pol,\pssv,\pflat_0}(\pflat_1) \qquad \forall \, 1\le i\le r.
\end{equation}
We similarly have
\begin{equation}	\label{ub:hatq}
\dist( \bq_k, \subsp^{(r)}(\hU^0, \hbu'_{k_1},\dots,\hbu'_{k_r} )) \ls_{\pol,\pssv,\pflat_0} \pflat_1
\qquad\forall \,k\notin I_r.
\end{equation}
By the triangle inequality, taking $R=O_\pol(1)$ sufficiently large, there exist $\hbq_k\in \Sigma_{\hU^0,\hbu'_{k_1},\dots, \hbu'_{k_r}}(\pflat_1, R)$ such that
\[
\|\bq_k- \hbq_k\|_2\ls_{\pol,\pssv,\pflat_0} \pflat_1 \qquad \forall \,k\notin I_r.
\]
Finally, setting $\hbu'_k = (\Slin'\otimes\id_N)\hbq_k$, we have for each $k\notin I_r$ that
\begin{align*}
\|\bu'_k - \hbu'_k \|_2 
& = \bigg\| (S'\otimes \id_N) \begin{pmatrix} u^0_k - \hu^0_k \\ \bq_k(\UU) - \hbq_k \end{pmatrix} \bigg\|_2
 \ls_\pol \bigg\| \begin{pmatrix} u^0_k - \hu^0_k \\ \bq_k(\UU) - \hbq_k \end{pmatrix} \bigg\|_2
 \ls_{\pol,\pssv,\pflat_0} \pflat_1
\end{align*}
giving \eqref{rank.goal1} as desired.
That $\hUU\in \badR_{(k_1,\dots, k_r)}(\pflat_0/2)$ follows from \eqref{lb:hatq} and taking the constant $c_1(\pol,\pssv,\pflat_0)$ sufficiently small.

The proof of \Cref{claim:net1} follows similar lines. The net $\cN_\emptyset(\pflat_1)$ is taken as in \eqref{def:hU0}--\eqref{hbu.prime} with $I_r=\emptyset$ (so no columns $\hbu'_k$ are fixed at this initial stage). The estimates \eqref{ub:qk} and \eqref{ub:hatq} are obtained by the same lines, with the subspaces $\subsW_\emptyset = \subsW^{(0)}(U^0)$ and $\subsW^{(0)}(\hU^0)$, while we skip \eqref{lb:qk1} and \eqref{lb:hatq}. 
\end{proof}

\subsection{Proof of \Cref{lem:struct.net}: Constructing a net for structured matrices}
\label{sec:struct.net}

We first record two elementary lemmas. 
The first is a quantitative formulation of the fact that the only way for a vector to be close to all $m$ coordinate hyperplanes generated by a well-conditioned basis of $\C^m$ is for it to have small magnitude. 

\begin{lemma}	\label{lem:smallx}
Let $A\in \Mat_m(\C)$ with $|\det A| \ge \kappa$ and columns $x_1,\dots,x_m\in \ball^m$.
Suppose that for some $v\in \C^m$ we have 
\begin{equation}	\label{hyp:smallx}
|\det((v, A_J))| <\delta \qquad \forall\; J\in {[m]\choose m-1} 
\end{equation}
where we write $A_J$ for the $m\times |J|$ submatrix with columns $\{x_j: j\in J\}$. 
Then $\|v\|_2 < m\delta/\kappa$. 
\end{lemma}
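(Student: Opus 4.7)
The plan is to exploit the fact that the hypothesis $|\det A|\ge\kappa$ means the columns $x_1,\dots,x_m$ form a basis of $\C^m$, so $v$ admits a unique expansion $v=\sum_{i=1}^m c_i x_i$. The whole proof then hinges on a Cramer's-rule style identity: for the set $J=[m]\setminus\{i\}$, substituting the expansion of $v$ into the first column of $(v,A_J)$ and using multilinearity together with the alternating property of the determinant gives
\[
\det(v,A_J)=\sum_{j=1}^m c_j\,\det(x_j,A_J)=c_i\,\det(x_i,A_J)=\pm c_i\det A,
\]
since $\det(x_j,A_J)=0$ whenever $j\in J$ (repeated column), and reordering the columns of $(x_i,A_J)$ to recover $A$ costs only a sign. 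Hypothesis \eqref{hyp:smallx} applied to this particular $J$ then forces $|c_i|<\delta/\kappa$.

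Running this argument for each $i\in[m]$ yields the same coordinatewise bound $|c_i|<\delta/\kappa$, after which the triangle inequality together with $\|x_i\|_2\le 1$ closes the proof:
\[
\|v\|_2=\Big\|\sum_{i=1}^m c_i x_i\Big\|_2\le\sum_{i=1}^m|c_i|\,\|x_i\|_2<m\delta/\kappa.
\]
There is no real obstacle here; the only thing to be careful about is the sign/ordering when identifying $\det(x_i,A_J)$ with $\pm\det A$, but since only absolute values enter the final estimate this is harmless. The lemma is thus a direct quantitative reformulation of the standard fact that the coordinates of $v$ in a well-conditioned basis are controlled by the determinants of the matrices obtained from $A$ by replacing one column with $v$.
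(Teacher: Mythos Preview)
Your proof is correct and follows essentially the same approach as the paper: expand $v$ in the basis $x_1,\dots,x_m$, use Cramer's rule to bound each coordinate by $\delta/\kappa$, and then pass to a bound on $\|v\|_2$. The only cosmetic difference is the final step—you use the triangle inequality $\|v\|_2\le\sum_i|c_i|\,\|x_i\|_2$, whereas the paper bounds $\|v\|_2=\|Ab\|_2\le\|A\|_\op\|b\|_2\le\sqrt{m}\cdot\sqrt{m}\,\delta/\kappa$—but both give the identical constant $m\delta/\kappa$.
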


\begin{proof}
We expand $v=\sum_{i=1}^m b_ix_i$. Applying \eqref{hyp:smallx} with $J=[m]\setminus \{j\}$ gives
\[
\delta> |\det((v,A_J)|  = |b_j| |\det A| \ge \kappa |b_j| \qquad \forall\; j\in [m], 
\]
so $\|b\|_2 < \sqrt{m}\delta/\kappa$. 
From our assumptions we have $\|A\|_\op\le \|A\|_\HS\le \sqrt{m}$, and so
\[
\|v\|_2 = \|Ab\|_2 \le \sqrt{m} \|b\|_2 < m\delta/\kappa
\]
as desired.
\end{proof}

\Cref{lem:smallx} will be used in conjunction with the following, which locates a well-conditioned basis of rows in the matrix $U^0$. Recall that $v^{0}_{i}\in\C^\DD$, $i\in [N]$ denote the rows of $U^{0}$.

\begin{lemma}	\label{lem:goodrows}
For any $\UU\in \good(\pssv)$, there exists $I^* = \{ i_0,\dots, i_\rnk\} \subset [N]$ such that 
\begin{equation}	\label{goodrows}
\|v_{i_0}^0\wedge \cdots \wedge v_{i_k}^0\|_2 \ge (\pssv/\sqrt{N})^{k+1}
\end{equation}
for each $0\le k\le \rnk$. 
\end{lemma}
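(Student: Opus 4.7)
I would construct $I^*$ greedily. Set $i_0 \in [N]$ to maximize $\|v_i^0\|_2$, and for $1 \le k \le \rnk$, choose $i_k \in [N]\setminus\{i_0,\dots,i_{k-1}\}$ to maximize $\|v_{i_0}^0 \wedge \cdots \wedge v_{i_{k-1}}^0 \wedge v_i^0\|_2$. Writing $\omega_k := \|v_{i_0}^0 \wedge \cdots \wedge v_{i_k}^0\|_2$ and $V_{k-1} := \Span(v_{i_0}^0,\dots, v_{i_{k-1}}^0)$ (with $V_{-1} := \{0\}$ and $\omega_{-1} := 1$), the standard factorization of wedge norms gives $\omega_k = \omega_{k-1} \cdot \dist(v_{i_k}^0, V_{k-1})$, and by the greedy choice this equals $\omega_{k-1} \cdot \max_i \dist(v_i^0, V_{k-1})$. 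The problem therefore reduces to showing $\max_i \dist(v_i^0, V_{k-1}) \ge \pssv/\sqrt{N}$ for each $0\le k\le \rnk$, after which an immediate induction gives $\omega_k \ge (\pssv/\sqrt{N})^{k+1}$.

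To obtain this distance bound, let $P_k$ denote the orthogonal projection of $\C^\DD$ onto $V_{k-1}^\perp$, whose rank is $\rnk+1-k \ge 1$. The $i$-th row of $U^0 P_k$ equals $(P_k v_i^0)^*$, so
\[
N \cdot \max_i \dist(v_i^0, V_{k-1})^2 \;\ge\; \sum_{i=1}^N \|P_k v_i^0\|_2^2 \;=\; \|U^0 P_k\|_\HS^2 \;=\; \tr\bigl(P_k (U^0)^* U^0 P_k\bigr).
\]
Expanding $P_k = \sum_{j=1}^{\rnk+1-k} w_j w_j^*$ in any orthonormal basis $\{w_j\}$ of $V_{k-1}^\perp$, and using that every eigenvalue of $(U^0)^* U^0$ is at least $\smin(U^0)^2 \ge \pssv^2$, we have $\tr(P_k (U^0)^* U^0 P_k) = \sum_j \langle (U^0)^* U^0 w_j, w_j\rangle \ge (\rnk+1-k)\pssv^2 \ge \pssv^2$. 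Combining, $\max_i \dist(v_i^0, V_{k-1}) \ge \pssv/\sqrt{N}$, which closes the induction.

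There is no real obstacle: the singular value lower bound on $U^0$ guarantees ``energy'' in every direction orthogonal to the rows already chosen, and the $\sqrt{N}$ factor is only incurred in the pigeonhole step passing from $\sum_i \|P_k v_i^0\|_2^2$ to its maximum. One could alternatively extract a single set $I^*$ directly from Cauchy--Binet, noting $\det((U^0)^* U^0) = \sum_{I} |\det (U^0)_I|^2 \ge \pssv^{2(\rnk+1)}$ forces some $I^*$ with $|\det(U^0)_{I^*}| \ge (\pssv/\sqrt{N})^{\rnk+1}$; but this controls only the top wedge and would then require a separate ordering argument to recover the intermediate bounds, so the greedy construction seems cleaner.
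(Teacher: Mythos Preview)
Your proof is correct and follows essentially the same approach as the paper's: a greedy/iterative selection of rows, using that $\smin(U^0)\ge\pssv$ forces some row to have distance $\ge\pssv/\sqrt N$ from the span of previously chosen rows, together with the base-times-height formula for wedge norms. The only cosmetic difference is that the paper argues by contradiction with a single unit vector $u\in V_{k-1}^\perp$ (bounding $\|U^0u\|_2$), whereas you average over an orthonormal basis of $V_{k-1}^\perp$ via $\|U^0P_k\|_\HS^2$, which is an equivalent route to the same inequality.
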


\begin{proof}
We iteratively construct $I^*$ as follows: first, there exists $i_0\in [N]$ such that $\|v_{i_0}^0\|_2 \ge \pssv/\sqrt{N}$, as otherwise $\smin(U^0)\le \|U^0\|_\op\le \|U^0\|_\HS<\pssv$. 
Now for $1\le k\le \rnk-1$, having picked $i_0,\dots, i_k$, there exists $i_{k+1}\in [N]$ such that $\dist(v_{i_{k+1}}^0, \langle v^0_{i_0}, \dots , v^0_{i_k}\rangle ) \ge \pssv/\sqrt{N}$, as otherwise, for any unit vector $u\in \langle v^0_{i_0}, \dots , v^0_{i_k}\rangle^\perp$ we would have
\[
\|U^0u\|_2^2 = \sum_{j\in [N]\setminus \{i_0,\dots, i_k\}} |\langle v_j^{0}, u\rangle|^2 < \pssv^2,
\]
which contradicts $\smin(U^0)\ge \pssv$. \eqref{goodrows} now follows from the base-times-height formula for the norm of the wedge product. 
\end{proof}

Fix an arbitrary $k_{0}\in[\rnk]$. Our proof of \Cref{lem:struct.net} is divided into two cases depending on $\pol$: 
\begin{enumerate}[(A)]
\item \label{caseA} $\slin_{k_0,\ell_0}\ne0$ for some $\ell_0 \in  [\rnk+1,\num]$;
\item \label{caseB} $\slin_{k_0,\ell_0}\ne0$ for some $\ell_0 \in [\rnk]$.
\end{enumerate}
Since the vector $\slin_{k_{0}}\in \C^\num$  has nonzero  norm at least one of these cases must hold. 
Our construction of $\cN$ will be different for each case. 

In what follows, for parameters $R,\rho>0$
we let $\Sigma_0(R,\rho)$ be a $\rho/\sqrt{N}$-net for the ball of radius $R$ in $\C^{[0,\rnk]}$, 
and for $J\subseteq [N]$, we let $\Sigma_J(R,\rho)\subset \matt{[N]}{[0,\rnk]}{\C}$ be a $\rho$-net for the Hilbert--Schmidt ball of radius $R$ on the subspace of matrices supported on the rows indexed by $J$. (We take $\Sigma_\emptyset(R,\rho)$ to consist of the zero matrix.)
As before, all nets are with respect to the appropriate Euclidean metric. 
By \Cref{lem:net} we may take 
\begin{equation}	\label{take.nets}
|\Sigma_0(R,\rho)| = O_{\pol}(R^2N/\rho^2)^{\rnk+1},\qquad  |\Sigma_J(R,\rho)| = O(R^2/\rho^2)^{(\rnk+1)|J|}.
\end{equation}

\begin{proof}[Proof of \Cref{lem:struct.net} under Case \ref{caseA}]
Let $\cN'$ be the set of matrices $\UU= (U^0,\dots, U^\rnk) \in \matt{[0,\rnk]\times [N]}{[0,\rnk]}{\C}$ such that $U^k\in \Sigma_{[N]}(\sqrt{\rnk+1},\pnet)$ for each $k\in [0,\rnk]\setminus\{k_0\}$, and $U^{k_0}$ has rows given by
\begin{equation}	\label{vj.formA}
v_j^{k_0} =- \frac{1}{\slin_{k_0,\ell_0}}  \sum_{k\in [0,\rnk]\setminus\{k_0\}} s_{k,\ell_0} v_j^k \,
, \qquad j\in [N],
\end{equation}
where $v_j^k$ is the $j$th row of $U^k$. 
Now for each $\UU'\in \cN'$ take an element $\UU''\in \cE$ within a distance $2\dist(\UU', \cE)$ of $\UU'$, and let $\cN\subset \cE$ be the set of all $\UU''$ obtained in this way.
By construction we have
\begin{equation}	\label{cNprime.bd}
|\cN| \le |\cN'| \le |\Sigma_{[N]}(\sqrt{\rnk+1},\pnet)|^\rnk = O_\pol(1/\pnet^2)^{\rnk(\rnk+1)N}.
\end{equation}
We claim that for any $\UU\in \cE$ there exists $\hUU\in \cN'$ with 
\begin{equation}	\label{hUU.approx}
\|\UU-\hUU\|_\HS \ls_{\pol,\pssv} \pnet.
\end{equation}
The lemma (under Case \ref{caseA}) clearly follows from this and the triangle inequality, and replacing $\pstr$ above with $c(\pol,\pssv)\pstr$ for a sufficiently small constant $c(\pol,\pssv)>0$.

To show \eqref{hUU.approx}, for arbitrary $\UU\in \cE$, we
take $\hUU\in \cN'$ to be an element such that $\|U^k-\hU^k\|_\HS<\pnet$ for each $k\ne k_0$.
(Recall that $\hU^{k_0}$ is determined by \eqref{vj.formA} once $U^k$ for $k\ne k_0$ are fixed.)
It only remains to show
\begin{equation}	\label{caseA.goal}
\|U^{k_0}-\hU^{k_0}\|_\HS \ls_{\pol, \pssv} \pnet.
\end{equation}
Let $I^*=\{ i_0,\dots, i_\rnk\}\subset [N]$ satisfying \eqref{goodrows} for $\UU$. 
Since $\UU\in \Struct(\pstr)\subset \Struct_1(\pstr)$ we have
\[
| \Delta_{j, I}| := |\det(w_j, v_I^0 )| <\pstr\qquad \forall \;j\in [N], \;\forall\; I\in {I^*\choose \rnk}
\]
where we write $v_I^0:= (v_j^0: j\in I)$, and for ease of notation we write $w_j:= w_j^{\ell_0}$ ($\ell_0$ being fixed at this stage).
From \eqref{goodrows} (with $k=\rnk$) and \Cref{lem:smallx} it follows that
\[
\|w_j\|_2 < \pstr(\rnk+1)(\sqrt{N}/\pssv)^{\rnk+1} \qquad \forall \;j\in [N].
\]
Now for each $j\in [N]$, writing $\hv_j^{k_0}$ for the $j$th row of $\hU^{k_0}$, from \eqref{def:ws}, the triangle and AM-GM inequalities, and the above inequality, we have
\begin{align*}
\|v_j^{k_0} - \hv_j^{k_0} \|_2 
&= \frac1{|s_{k_0,\ell_0}|^2} \Big\| w_j - \sum_{k\in [0,\rnk]\setminus\{k_0\}} s_{k,\ell_0} ( v_j^k- \hv_j^k) \Big\|^2\\
& \ls_{\pol,\pssv} \pstr^2 N^{\rnk+1} + \sum_{k\in [0,\rnk]\setminus\{k_0\}} \|v_j^k - \hv_j^k\|_2^2.
\end{align*}
Summing over $j$ and applying the bounds $\|U^k-\hU^k\|_\HS<\pnet$ for $k\ne k_0$ and our assumed bound on $\delta$, we obtain \eqref{caseA.goal} as desired.
\end{proof}

\begin{proof}[Proof of \Cref{lem:struct.net} under Case \ref{caseB}]
Let $\Lambda_0$ be a $\pnet/\sqrt{N}$-mesh for the interval $[-R,R]$, for some $R=O_\pol(1)$ to be taken sufficiently large.
We take $\cN''$ be the set of matrices $\hUU= (\hU^0,\dots, \hU^\rnk) \in \matt{[0,\rnk]\times [N]}{[0,\rnk]}{\C}$ such that
\begin{equation}	\label{Uk.net}
\hU^k \in \Sigma_{[N]}(\sqrt{\rnk+1}, \pnet) \qquad \forall\;k\in [\rnk]\setminus\{k_0\},
\end{equation}
and for some (possibly empty) $J\subseteq[N]$, 
\begin{align}	\label{Jc.net}
\hU^0_{J^c} &\in \Sigma_{J^c}(\pnet^{1/2}, \pnet)  \quad\text{ and } \quad \hU^{k_0}_{J^c} \in \Sigma_{J^c} (\sqrt{\rnk+1},\pnet) 
\end{align}
while for each $j\in J$, 
\begin{equation}	\label{vj0.net}
\hv_j^0\in \Sigma_0(1,\pfine),\qquad \pfine:= \pnet^{3/2}/N
\end{equation}
and
\begin{equation}	\label{vj.formB}
\hv_j^{k_0} = \frac{1}{\slin_{k_0,\ell_0}} \Big( \hw_j -  \sum_{k\in [0,\rnk]\setminus\{k_0\}} s_{k,\ell_0} \hv_j^k \Big) 
\end{equation}
for some $\hw_j\in \Lambda_0\cdot \langle \hv_j^0\rangle$. 
Here we write $J^c:=[N]\setminus J$, and $\hU^k_{I}$ for the matrix obtained from $\hU^k$ by zeroing out the rows with indices in $I^c$. 

Constructing $\cN\subset \cE$ from $\cN''$ analogously to how $\cN$ was obtained from $\cN'$ in the proof for Case \ref{caseA}, 
we have by construction that 
\begin{align*}
|\cN| 
&\le |\cN''| \\
&\le  |\Sigma_{[N]}(\sqrt{\rnk+1},\pnet)|^{\rnk-1} \sum_{J\subseteq [N]}
|\Sigma_{J^c}(\pnet^{1/2},\pnet)| |\Sigma_{J^c}(\sqrt{\rnk+1},\pnet)| 
|\Sigma_0(1, \pfine)|^{|J|} |\Lambda_0|^{|J|}\\
&= O_\pol(1/\pnet^2)^{(\rnk-1)(\rnk+1)N} \sum_{J\subseteq[N]} O_\pol(1/\pnet)^{(\rnk+1)(N-|J|)} O_{\pol}(1/\pnet^2)^{(\rnk+1)(N-|J|)} \\
&\qquad\qquad\qquad\qquad\qquad\qquad\qquad\times O_{\pol}(1/\pfine^2)^{(\rnk+1)|J|} O_\pol(N/\pnet^2)^{|J|}\\
&= O_\pol(1)^N  O(1/\pnet^2)^{(\rnk+1/2)(\rnk+1)N} \sum_{J\subseteq[N]} O(N)^{(2\rnk+3)|J|} O(1/\pnet^2)^{|J|}\\
&=O_{\pol}(N)^{(2\rnk+3)N}  O(1/\pnet^2)^{(\rnk^2+\frac32\rnk+\frac32)N}
\end{align*}
where in the last line we simply bounded $|J|\le N$ and absorbed the harmless factor $2^N$ in the implied constant. 

As in the proof for Case \ref{caseA}, we will be done if we can show that for every $\UU\in \cE$ there exists $\hUU\in \cN''$ such that \eqref{hUU.approx} holds.
Fixing now an arbitrary $\UU\in \cE$, for each $k\in [\rnk]\setminus \{k_0\}$ we let $\hU^k$ be as in \eqref{Uk.net} with $\|U^k-\hU^k\|_\HS<\pnet$.
It only remains to pick $\hU^0, \hU^{k_0}$ as in \eqref{Jc.net}--\eqref{vj.formB} for some choice of $J=J(\UU)\subseteq [N]$ with
\begin{equation}	\label{caseB.goal1}
\|U^k-\hU^k\|_\HS \ls_{\pol,\pssv} \pnet\,, \quad k\in \{0,k_0\}.
\end{equation}

We take $J$ to be the set of large rows of $U^0$:
\[
J= J(U^0) = \{ j\in [N]: \|v_j^0\|_2\ge \pnet^{1/2}/\sqrt{N}\}.
\]
We have
$
\|U^0_{J^c}\|_\HS < \pnet^{1/2}
$,
and so there exists $\hU^0_{J^c}$ as in \eqref{Jc.net} with $\|U^0_{J^c}-\hU^0_{J^c}\|_\HS\le \pnet$. Completing $\hU^0$ with rows $\hv_j^0$ as in \eqref{vj0.net} such that $\|v_j^0-\hv_j^0\|_2 \le \pfine$ for each $j\in J$, we obtain \eqref{caseB.goal1} for $k=0$. 

Since $\|U^{k_0}\|_\HS \le \sqrt{\rnk+1}$ we can take $\hU^{k_0}_{J^c}$ as in \eqref{Jc.net} with $\|U^{k_0}_{J^c} - \hU^{k_0}_{J^c}\|_\HS \le \pnet$. 
It only remains to show there exist $\hv_j^{k_0}, j\in J$ as in \eqref{vj.formB} such that
\begin{equation}	\label{caseB.goal2}
\sum_{j\in J} \|v_j^{k_0} - \hv_j^{k_0}\|_2^2 \ls_{\pol,\pssv} \pnet^2.
\end{equation}

Turning to this task,
let $I^*=\{ i_0,\dots, i_\rnk\}\subset [N]$ satisfying \eqref{goodrows} for $\UU$. 
For ease of writing we set $m:= \rnk+1$ and denote $x_k:= v_{i_{k-1}}^0$ for $1\le k\le m$. 
We also use the shorthand $x_{\wedge I} :=x_{k_1}\wedge \cdots \wedge x_{k_\ell}$ for $I=\{k_1,\dots, k_\ell\}\subseteq [m]$ with $k_1<\cdots <k_m$. 
From \eqref{goodrows} we have
\begin{equation}	\label{xwedgem}
|x_1\wedge\cdots\wedge x_m| \ge (\pssv/\sqrt{N})^m =: \kappa.
\end{equation}

Consider an arbitrary $j\in J$. 
From \Cref{lem:smallx}, and the bounds \eqref{xwedgem} and $\|v_j^0\|_2\ge \pnet^{1/2}/\sqrt{N}$ it follows that there exists $\wh{I} =\wh{I}_j\subset [m]$ of size $m-1$ such that
\begin{equation}	\label{eyehat}
|v_j^0\wedge x_{\wedge \wh I}| \ge \frac{\kappa\pnet^{1/2}}{m\sqrt{N}}.
\end{equation}
Since $\UU\in \Struct(\pstr)\subset \Struct_2(\pstr)$ we have that for every $I\subset [m]$ of size $m-2$, 
\[	
| \wt\Delta_{j,I}| := |\det(w_j, v_j^0, (x_k)_{k\in I} )| = | w_j\wedge v_j^0 \wedge x_{\wedge I}|<\pstr.
\]
Letting $\tw_j, \tx_1,\dots, \tx_m$ denote the projections of $w_j, x_1,\dots, x_m$ to $\langle v_j^0\rangle^\perp$,
we have in particular that
\begin{equation}	\label{wtD}
\pstr> |w_j\wedge v_j^0 \wedge x_{\wedge I}| = \|v_j^0\|_2 \|\tw_j \wedge \tx_{\wedge I}\|_2  
\ge \frac{\pnet^{1/2}}{\sqrt{N}}\|\tw_j \wedge \tx_{\wedge I}\|_2  \quad \forall\; I\in {\wh{I}\choose m-2}.
\end{equation}
On the other hand, since $\|v_j^0\|_2\le 1$, we have from \eqref{eyehat} that 
\[
\|x_{\wedge \wh{I}}\|_2 \ge  |v_j^0 \wedge x_{\wedge \wh{I}}| \ge \frac{\kappa\pnet^{1/2}}{m\sqrt{N}}.
\]
Together with \eqref{wtD} and \Cref{lem:smallx} (identifying the subspace $\langle v_j^0\rangle^\perp$ with $\C^{m -1}$) this implies
\[
\dist(w_j, \langle v_j^0\rangle) = \|\tw_j\|_2 < \frac{m(m-1)\pstr N}{\kappa \pnet} 
\ls_{\pol,\pssv} \pstr N^{(\rnk+3)/2} / \pnet \ls \pnet/\sqrt{N}
\]
using our assumption on $\pnet$ and recalling $m=\rnk+1=O_\pol(1)$. 
Now from the estimates 
\[
\|w_j\|_2 = O_\pol(1)\,, \qquad \|v_j^0\|_2 \ge \frac{\pnet^{1/2}}{\sqrt{N}}\,, \qquad \|v_j^0-\hv_j^0\|_2 \le  \rho_1= \pnet^{3/2}/N,
\]
the identity
\[
\dist(w_j, \langle v_j^0\rangle) = \frac{\|w_j\wedge v_j^0\|_2}{\|v_j^0\|_2}
\]
and multilinearity of the wedge product, we have
\[
\dist(w_j, \langle \hv_j^0\rangle)  = \dist(w_j, \langle v_j^0\rangle)  + O_{\pol,\pssv}( \pnet/\sqrt{N})  \ls_{\pol,\pssv} \pnet/\sqrt{N}.
\]
Since $\|w_j\|_2=O_\pol(1)$, by taking $R$ sufficiently large we have that for every $j\in J$ there exists $\hw_j\in \Lambda_0\cdot \hv_j^0$ such that
\begin{equation}	\label{wj.approx}
\|w_j-\hw_j\|_2 \ls_{\pol,\pssv} \pnet/\sqrt{N}.
\end{equation}
Taking $\hv_j^{k_0}$ as in \eqref{vj.formB} for each $j\in J$ (having by now fixed all vectors on the right hand side), we have by \eqref{def:ws} and the triangle and AM-GM inequalities
\[
\|v_j^{k_0} - \hv_j^{k_0}\|_2^2 \ls_{\pol} \|w_j - \hw_j\|_2^2 + \sum_{k\in [\rnk]\setminus \{k_0\}} \|v_j^k-\hv_j^k\|_2^2 .
\]
Substituting \eqref{wj.approx}, summing over $j\in J$, and applying the bounds $\|U^k-\hU^k\|_\HS\ls_\pol \pnet$ for $k\ne k_0$ yields \eqref{caseB.goal2} and completes the proof.
\end{proof}

\begin{remark}
Let us explain why we cannot take any linearization for $\pol$. Consider a linearization of the form 
\[ 
\LLz = \begin{pmatrix} -z + T & R_1 & \ldots & R_{n_0} \\ Y_1 & -\id & \ldots& 0\\ \vdots & &\ddots&  \\ {Y}_{n_0} & \cdots& & -\id \end{pmatrix}
\]
(compare \eqref{def:LLz})
where the $R_k$, $Y_\ell$ and $T$ are linear forms in the matrices $X_1,\dots, X_n$ with respective coefficient vectors $r_k, s_\ell, t\in \C^n$.
If the family $\{ r_1,...,r_{n_0} \}$ is not of full rank, there exists a non-null vector $x \in \C^{n_0}$ such that $\sum_{i=1}^{n_0} x_i r_i =0$, then if $\tilde{x} =(0,x_1,...,x_{n_0})$, the matrices $L_i$ as in \eqref{def:Li} (with $\langle r_k,\bxi_i\rangle$ in place of $\xi_i^k$) all satisfy $L_i \tilde{x} =0$. When we look for the coefficients of the terms of order $n_0 +1$ of $\det ( M + \sum_{i=1}^N U^*_i L_i )$, we find that these are the same as in $\det(\sum_{i=1}^N U^*_i L_i )$, and hence they are zero. 
On the other hand, if $\{ s_1,...,s_{n_0} \}$ is not of full rank, we then have a non-null vector $x \in \C^{n_0}$ such that 
$\sum_{k=1}^{n_0} x_k Y_k =0$ 
and so $\sum x_k \row_1(Y_k') =0$. 
Then considering the vector $u$ with first block $u^0=0$ and $u^k(1)=x_k$ for $k =1,...,n_0$, 
$u^k(i) = 0$ for all $k$ and all $i\in [2,N]$,
we have that $u$ is a non-null element of $(\subsV^z_{(1)})^\perp$, so that $\sigma_{min}(U^0) =0$. And so we cannot have our lower bound on the effective rank $\RR(U^0)$ (as in \eqref{def:flat}) and our anti-concentration bound does not beat the cardinality of the net. The problem of finding a ``nice'' linearization for $\pol$ is also why the question of polynomials of degrees higher than $2$ is more difficult. For instance, it is not clear that a homogeneous polynomial of degree 3 admits a linearization of the form $\pol = \sum_{i=1}^{n_0} R_i S_i T_i$ where the families $\{ R_i\}_{1 \leq i \leq n_0 }$ and $\{ T_i\}_{1 \leq i \leq n_0 }$ are of full rank. 
\end{remark}

\section{Proof of  \Cref{thm:brown} }
\label{sec:law}
The proof of  \Cref{thm:brown} follows Girko's idea \cite{Girko} based on Green's formula  \eqref{Green}. 
For  $z \in \C$ , we set
\[
\nu_N^z := \mu_{(z \id_N - \Pol^N)(z \id_N-\Pol^N)^*}\,,
\]
where we recall that $\Pol^N = \pol(X_1^N,\dots, X_n^N)$,
as well as our notation \eqref{def:ESD}.
Applying the above formula to the eigenvalues of $\Pol^N$, we deduce that
\begin{equation}\label{green}\int_\C \psi(z)d\mu_{\Pol^{N}}(z)=\frac{1}{4\pi}\int_\C \Delta \psi(z) \int _0^\infty\log (x) d\nu_N^z(x) dz\,.\end{equation}
The proof  of the convergence of the right hand side is broken into the following steps.

\begin{enumerate}
\item\label{point1} For all $z \in \C$, we show that $\nu_N^{z}$ converges weakly almost surely to some identifiable probability measure $\nu^z$ on $\R_+$.
\item\label{point2} Using our lower bound in \Cref{thm:pseudo}, we show that for almost every $z \in \C$, $ \int_{\R_+} \log (x) d\nu_N^{z}(x)$ converges to $\int_{\R_+} \log (x) d\nu^{z}(x)$ in probability. 
\item\label{point3} We show that, in probability, $ z \mapsto  \int_{\R_+} \log (x) d\nu_N^{z}(x)$ converges in $L^1$ to $ z\mapsto \int_{\R_+} \log (x) d\nu^{z}(x)$, and therefore $\mu_{\Pol^N}$ converges in distribution to a limit that we identify as $\nu_{\pol(c_1,\dots, c_n)}$.
\end{enumerate}

\subsubsection*{Proof of (\ref{point1}):} Because $(z \id_N - \Pol^N)(z \id_N-\Pol^N)^*$ is a self-adjoint polynomial in independent Ginibre matrices and their adjoints, 
the first point is a direct consequence of Voiculescu's theorem \cite{V91}, see \cite{AGZ} for a review.  
In particular we have that $\nu^z$ is the distribution of $|z-\pol(c_1,\dots,c_n)|^2$ in the sense of $*$-moments, and from the boundedness of the circular elements it follows that $\nu^z$ is compactly supported for every fixed $z\in \C$.

\subsubsection*{Proof of (\ref{point2}):} To prove the second point, we need to deal with the fact that the logarithm is unbounded. To this end, first observe
that  by Lemma \ref{lem:norm}, there is a constant $\bound >0$ such that 
\begin{equation}\label{boundP}\limsup_{N \to \infty}  \|\Pol^N\| \leq \bound \qquad a.s.\end{equation}
Therefore, fixing $\epsilon>0$, the first point implies that for any smooth nonnegative function $\chi_\epsilon$ which vanishes on $[0,\epsilon/2]$ and equals one on $[\epsilon, \infty)$,
\begin{equation}\label{conv1}
\lim_{N\rightarrow\infty} \int_0^\bound \chi_\epsilon(x)\log(x) d\nu_N^z(x)=\int_0^\bound \chi_\epsilon(x)\log(x) d\nu^z(x)\qquad a.s.
\end{equation}
The main point is therefore to show that $\int_0^\epsilon \log(x) d\nu_N^z(x)$ is negligible.  We will show that
for every $\delta,\delta' > 0$ there is $\epsilon\in (0,1)$ such that 
\[ 
\limsup_{N \to \infty} \pr\bigg\{\, \bigg|\int_0^{\epsilon} \log x d\nu_N^z(x) \bigg| \geq \delta \,\bigg\} \leq \delta' .
\]
Denoting $\mathcal{G}_N = \{ \|\Pol^N\| \leq \bound, \smin(z-\Pol^N) \geq N^{-\beta}\}$, by Theorem \ref{thm:pseudo} and \eqref{boundP}, we can choose $\beta$ large enough so that $\pr(\cG_N)$
goes to one. Hence, it 
suffices to show that  
\[ 
\lim_{\epsilon \to 0} \limsup_{N \to \infty} \E \Big( \mathds{1}_{\mathcal{G}_N} \int_0^{\epsilon} |\log x| d\nu^{z}_N(x) \Big) =0.
\]
On the event $\mathcal{G}_N$,
\[  
\int_0^{\epsilon} \log x d\nu_N^z(x) = \int_{N^{-\beta}}^{\epsilon} \log x d\nu_N^z(x) ,
\]
so we only need to show 
\begin{equation}\label{cvb}
\lim_{\epsilon \rightarrow 0} \limsup_{n \to \infty} \E \Big|\int_{N^{-\beta}}^{\epsilon} \log x d\nu_N^z(x)\Big| =0. 
\end{equation}
We denote $\bar{\nu}_n^z = \E[ \nu_N^z]$ and $g$ (resp.\ $g_N$)  the Stieljes transform of $\nu^z$ (resp.\ $\bar\nu_N^z$) given for $\zeta\in\mathbb C$ by 
\[
g^z(\zeta)=\int_0^\infty \frac{1}{\zeta-x}d\nu^z(x),\quad g_N^z(\zeta)=\mathbb E[\int_0^\infty\frac{1}{\zeta-x}d\nu_N^z(x)]\,.
\]
The next lemma is the key to prove \eqref{cvb}.

\begin{lemma}\label{lemB} Let $z\in\mathbb C$ be fixed. 
There exist $C,N_0 > 0 $ and $c_1,c_2\in (0,1)$ such that for $\eta\in [N^{-c_1}, 1]$ and $N \geq N_0$,
\[ 
|\Im (g_N^z(i \eta))| \leq C \eta^{-c_2} .
\]
\end{lemma}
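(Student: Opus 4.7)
The plan is to bound $|\Im g_N^z(i\eta)|$ by comparison with the Stieltjes transform of the deterministic limit $\nu^z$, combined with a counting estimate on the small eigenvalues of $(zI-P^N)(zI-P^N)^*$. First I would write
\begin{equation*}
|\Im g_N^z(i\eta)| = \int_0^\infty \frac{\eta}{\eta^2+x^2}\, d\bar\nu_N^z(x) \le \frac{\bar\nu_N^z([0,\eta])}{\eta} + \eta \int_\eta^{M^2} \frac{d\bar\nu_N^z(x)}{x^2} + \eta,
\end{equation*}
where $\bar\nu_N^z := \E[\nu_N^z]$ and, by \Cref{lem:norm}, up to an additional error of size $e^{-N}$ we may restrict to an event on which $\nu_N^z$ is supported in $[0,M^2]$ for some $M = O_\pol(|z|+1)$. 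Both terms are then controlled once one has a counting estimate of the form $\bar\nu_N^z([0,t]) \ls_{\pol,z} t^{1-c_2}$ at scales $t \in [N^{-c_1}, 1]$ (the second term by a dyadic decomposition of $[\eta, M^2]$ followed by summation of a geometric series).

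To produce such a count, I would pass to the Hermitization
\begin{equation*}
\hat L^z = \begin{pmatrix} 0 & L^z \\ (L^z)^* & 0 \end{pmatrix} \in \Mat_{2(\rnk+1)N}(\C)
\end{equation*}
of the linearization of \Cref{cor:linearize}. Since $\hat L^z$ is a self-adjoint polynomial of degree one in the independent complex Ginibre matrices $X_1,\dots,X_n$ with block-structured Gaussian entries, the local-law technology of Haagerup--Thorbj\o rnsen \cite{HT} (already invoked by the authors in the discussion after \Cref{cor:pseudo}) produces a quantitative comparison of its averaged empirical Stieltjes transform with the deterministic limit $\hat m^z$, valid at mesoscopic scales $\eta \gs N^{-c_1}$. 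The singular values of $\hat L^z$ are coarsely comparable to those of $zI - P^N$ via the Schur complement identity underlying \Cref{lem:linearize}, which transfers the bound on $|\Im \hat m^z(i\eta)|$ to the corresponding bound on $|\Im g_N^z(i\eta)|$ of the desired form $C\eta^{-c_2}$.

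The hardest part is the structural input needed on the limit: one must verify $|\Im\hat m^z(i\eta)| \le C\eta^{-c_2}$ for some $c_2 \in (0,1)$, equivalently that $\hat\mu^z$ has at worst an integrable singularity at $0$. For generic $z$ this can be extracted from the subordination/matrix-Dyson-equation algorithm of Belinschi--Mai--Speicher \cite{BMS}, which expresses $\hat m^z$ implicitly and exposes square-root-type behavior at spectral edges. Extra care is needed for exceptional $z$ where the Brown measure of $\pol(c_1,\dots,c_n)$ may be singular: there one can trade precision for robustness by combining the weak convergence of $\bar\nu_N^z$ to $\hat\mu^z$ at macroscopic scales with the pointwise lower bound on $\sigma_{\min}(zI-P^N)$ coming from \Cref{thm:pseudo}, which alone suffices to rule out an atom at $0$ and upgrade the weak convergence to the required tail estimate.
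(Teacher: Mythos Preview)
Your overall two-step strategy---compare $g_N^z$ to the limiting Stieltjes transform $g^z$ via a local law, then bound $|\Im g^z(i\eta)|$---matches the paper. But the execution diverges in two places, one of which is a real gap.

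For the local law, the paper takes the shortest path: decompose each Ginibre matrix as $X_j = (Y_j + iZ_j)/\sqrt 2$ with $Y_j, Z_j$ independent GUE, so that $(z-\Pol^N)(z-\Pol^N)^*$ is itself a self-adjoint polynomial in $2n$ independent GUE matrices, and the Haagerup--Thorbj{\o}rnsen estimate (in the form of \cite[Lemma 5.5.4]{AGZ}) applies directly to give $|g^z(\zeta) - g_N^z(\zeta)| \le c N^{-2}(\Im\zeta)^{-c'}$ on the relevant scales. Your route through the Hermitization of the linearization $\LLz$ is more circuitous: the Schur complement identity only gives $(\Pol-z)^{-1}$ as a \emph{compression} of $(\LLz)^{-1}$, so you get a one-sided interlacing inequality on singular values, not ``coarse comparability.'' The direction happens to be the one needed for the counting bound, but you should make this explicit rather than assert comparability.

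The genuine gap is in the limit bound. The paper invokes \cite{ShlSkou} to conclude that $\nu^z$ (the spectral measure of a self-adjoint polynomial in free circulars) has no atoms and that $g^z(\zeta)\sim K\zeta^q$ near any real singularity with $q\in\Q$, $q>-1$; this yields $|\Im g^z(i\eta)|\le C\eta^{-c_2}$ with $c_2<1$ at \emph{every} fixed $z$. Your appeal to \cite{BMS} for ``square-root behavior at edges'' does not deliver this: the subordination/matrix Dyson equation analysis describes the density of the Hermitized limit away from internal singularities, but does not by itself rule out a worse-than-integrable blow-up at $0$ for particular $z$. More seriously, your fallback for ``exceptional $z$''---combining weak convergence with the lower bound on $\smin(zI-\Pol^N)$ from \Cref{thm:pseudo}---does not work. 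That bound only says $\nu_N^z$ puts no mass on $[0,N^{-2\beta}]$; it says nothing about $\bar\nu_N^z([0,t])$ for $t$ in the mesoscopic window $[N^{-c_1},1]$, and ``no atom at $0$'' for the limit $\nu^z$ is strictly weaker than the H\"older-type tail estimate $\nu^z([0,t])\ls t^{1-c_2}$ you actually need. Replace this paragraph with a direct appeal to the algebraic-singularity result of \cite{ShlSkou} (or the H\"older bound of \cite{BM} that the paper also mentions).
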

We postpone the proof of this lemma to deduce first \eqref{cvb}.
This lemma implies that  for 
$x\in [N^{-c_1},1]$,
\[
F_N^z(x):=\bar{\nu}_N^z([0,x]) \leq 2  \int_{[0,x]}  \frac{x^2}{y^2+x^2}d\bar{\nu}_N^z(y)\le 2x \Im (g_N^z(i x)) \leq 2 C x^{1 - c_2}.
\]
Hence, we find for $\alpha < c_1$ and $\epsilon < 1$: 
\begin{align*}
&\E\Big|\int_{N^{-\beta}}^{\epsilon} \log x d\nu_N^z(x)\Big| \\
&\qquad \le - \int_{N^{-\beta}}^{N^{-\alpha}} \log x d\bar{\nu}_N^z(x)  - \int_{N^{-\alpha}}^{\epsilon} \log x d\bar{\nu}_N^z(x) \\
&\qquad \le \beta(\log N) \bar{\nu}_N^z([0, N^{-\alpha}])+
 \int_{N^{-\alpha}}^{\epsilon} \frac{1}{x} F_N^z(x) dx -  \log(\epsilon) F_N^z(\epsilon) +F_N^z(N^{-\alpha}) \log(N^{-\alpha}) \\
&\qquad \leq 
2C\beta(\log N) N^{-\alpha(1-c_1)}  
+2 C \frac{\epsilon^{1-c_2}}{1-c_2} - 2 (\log\epsilon) \epsilon^{1-c_2} ,
\end{align*}
which gives \eqref{cvb}.

\begin{proof}[Proof of \Cref{lemB}]
First we observe that Haagerup and Thorbj{\o}rnsen \cite{HT} proved the convergence of the Stieltjes transform of $\nu_N^z$ close to the real axis. Indeed, recall that Ginibre matrices can be decomposed as the sum of two independent GUE matrices: $X_j^N=(Y_j^N+iZ_j^N)/\sqrt{2}$ where $(Y_j,Z_j)_{1\le j\le n}$ are independent GUE matrices. Hence $(z-\Pol^N)(z-\Pol^N)^*$ can be seen as a polynomial in independent GUE matrices so that Haagerup and Thorbj{\o}rnsen result applies and, see e.g 
 \cite[Lemma 5.5.4]{AGZ}, implying that there exists $c_1$ finite such that  for $\Im\zeta\in [N^{-c_1},  1]$ and $N$ large enough,
\[ 
|g^z(\zeta) - g_N^z(\zeta)| \leq \frac{c_2}{N^2 (\Im \zeta)^{c_3}} .
\]
Up to take a smaller $c_1$, it is therefore enough to show that 
 $$\Im g^z( i \epsilon) \leq K \epsilon^{q}$$ 
 for some $q>-1$ and $K > 0$. Following \cite[Corollary 1.2]{ShlSkou},  $\nu^z$ has no atoms. Moreover, by \cite[Theorem 1.1]{ShlSkou}, $g^z$ is bounded close to the real line except possibly on a discrete set $A$. Assuming at worst that $A$ contains the origin, the same theorem shows that there exists $q\in\mathbb Q$ and a constant $K\neq 0$  such that $g^z(\zeta)\simeq K\zeta^q$ for $\zeta$ close to the origin. But clearly, since $\nu^z$ has no atoms, $q>-1$. A more quantitative proof could have used that \cite{BM} implies that the partition function of $\nu^z$ is H\"older with exponent $1/15$. 
 \end{proof}

\subsubsection*{Proof of (\ref{point3}):}
Denoting
\[
h_N(z) := \int_0^\infty \log|x| d \nu_N^z(x)\quad\mbox{ and }\quad h(z) =\int_0^\infty \log|x| d \nu^z(x) 
\]
we have shown that for every fixed $z\in \C$, $h_N(z)$ converges in probability to $h(z)$, 
the latter now being well defined by steps (\ref{point1}) and (\ref{point2}). In particular we have 
\[
h(z)= \int_\C\log|z-\lambda|d\nu_{\pol(c_1,\dots, c_n)}(\lambda)
\]
by definition of the Brown measure.
We next prove that for any compact set $K$, and on the events $\mathcal{A}_N = \{ \|\Pol^N\| < \bound \}$, $h_N$ converges as well in $L^1$ in the sense that
\begin{equation}\label{der}
\lim_{N \to \infty}  \E\Big( \mathds{1}_{\mathcal{A}_N}\int_{z \in K}| h_N (z) -h(z)| dz \Big) =0 .
\end{equation}
This is enough to conclude the  proof of  \Cref{thm:brown} by \eqref{green} for any twice continuously differentiable function $\psi$ with compact support.  The last condition is finally removed since the eigenvalues are almost surely bounded by $B$ according to \eqref{boundP}. To prove \eqref{der}, it is enough to notice that $h_N$ and $h$ belong to $L^2$ in the sense that
$$\mathbb E[ 1_{\mathcal{A}_N}\int_K |h_N(z)|^2dz]+\int_K|h(z)|^2 dz $$
is bounded independently of $N$, so that the bounded convergence theorem applies. But, Jensen's inequality and Fubini's theorem imply that
$$\mathbb E[ 1_{\mathcal{A}_N}\int_K |h_N(z)|^2dz]\le \mathbb E[1_{\mathcal{A}_N}\int_\C \int_K|\log|z-\lambda||^2 dz d\mu_{\Pol^N}(\lambda)]\le \sup_{|\lambda|\le B} \int_K|\log|z-\lambda||^2 dz$$ 
is finite, and a similar estimate holds for $h(z)$ (one obtains from the boundedness of the circular elements $c_1,\dots, c_n$ that $\nu_{\pol(c_1,\dots,c_n)}$ has compact support). Therefore, {in probability $h_N$ converge to $h$ in $L^1(K,\Leb)$} and so $\mu_{\Pol^N} = \frac1{4\pi}\Delta h_N$ converges to 
$\nu_{\pol(c_1,\dots, c_m)} = \frac1{4\pi} \Delta h$
in the sense of distributions on $K$. Again, taking $K$ that contains the support of 
$\nu_{\pol(c_1,\dots, c_m)} $
and $\mu_{\Pol^N}$,  the convergence in the sense of distributions implies weak convergence and the result is proved.

\bibliographystyle{plain}
\bibliography{bibliodeg}

\begin{thebibliography}{10}

\bibitem{AEK-inhom}
Johannes Alt, L{\'a}szl{\'o} Erd{\H o}s, and Torben Kr{\"u}ger.
\newblock Local inhomogeneous circular law.
\newblock {\em The Annals of applied probability}, 28(1):148--203, 2018.

\bibitem{AK}
Johannes Alt and Torben Kr\"uger.
\newblock Inhomogeneous circular law for correlated matrices.
\newblock Preprint, arXiv:2005.13533.

\bibitem{Anderson}
Greg~W. Anderson.
\newblock A local limit law for the empirical spectral distribution of the
  anticommutator of independent {W}igner matrices.
\newblock {\em Ann. Inst. Henri Poincar\'{e} Probab. Stat.}, 51(3):809--841,
  2015.

\bibitem{AGZ}
Greg~W. Anderson, Alice Guionnet, and Ofer Zeitouni.
\newblock {\em An introduction to random matrices}, volume 118 of {\em
  Cambridge Studies in Advanced Mathematics}.
\newblock Cambridge University Press, Cambridge, 2010.

\bibitem{BKMS}
Jess Banks, Archit Kulkarni, Satyaki Mukherjee, and Nikhil Srivastava.
\newblock Gaussian regularization of the pseudospectrum and davies' conjecture.
\newblock Preprint, arXiv:1906.11819.

\bibitem{BGKS}
Jess Banks, Jorge~Garza Vargas, Archit Kulkarni, and Nikhil Srivastava.
\newblock Pseudospectral shattering, the sign function, and diagonalization in
  nearly matrix multiplication time.
\newblock Preprint, arXiv:1912.08805.

\bibitem{BM}
Marwa Banna and Tobias Mai.
\newblock H\"{o}lder continuity of cumulative distribution functions for
  noncommutative polynomials under finite free {F}isher information.
\newblock {\em J. Funct. Anal.}, 279(8):108710, 2020.

\bibitem{BCZ}
Anirban Basak, Nicholas Cook, and Ofer Zeitouni.
\newblock Circular law for the sum of random permutation matrices.
\newblock {\em Electron. J. Probab.}, 23:Paper No. 33, 51, 2018.

\bibitem{BaDe}
Anirban Basak and Amir Dembo.
\newblock Limiting spectral distribution of sums of unitary and orthogonal
  matrices.
\newblock {\em Electron. Commun. Probab.}, 18:no. 69, 19, 2013.

\bibitem{BPZ}
Anirban Basak, Elliot Paquette, and Ofer Zeitouni.
\newblock Spectrum of random perturbations of {T}oeplitz matrices with finite
  symbols.
\newblock {\em Trans. Amer. Math. Soc.}, 373(7):4999--5023, 2020.

\bibitem{BMS}
Serban~T. Belinschi, Tobias Mai, and Roland Speicher.
\newblock Analytic subordination theory of operator-valued free additive
  convolution and the solution of a general random matrix problem.
\newblock {\em J. Reine Angew. Math.}, 732:21--53, 2017.

\bibitem{BSS}
Serban~T. Belinschi, Piotr {\'S}niady, and Roland Speicher.
\newblock Eigenvalues of non-hermitian random matrices and brown measure of
  non-normal operators: {H}ermitian reduction and linearization method.
\newblock {\em Linear Algebra and its Applications}, 537:48-- 83, 2018.

\bibitem{BL}
Philippe Biane and Franz Lehner.
\newblock Computation of some examples of {B}rown's spectral measure in free
  probability.
\newblock {\em Colloq. Math.}, 90(2):181--211, 2001.

\bibitem{BoCh:survey}
Charles Bordenave and Djalil Chafa{\"{\i}}.
\newblock Around the circular law.
\newblock {\em Probab. Surv.}, 9:1--89, 2012.

\bibitem{Brown}
L.~G. Brown.
\newblock Lidski\u{\i}'s theorem in the type {${\rm II}$} case.
\newblock In {\em Geometric methods in operator algebras ({K}yoto, 1983)},
  volume 123 of {\em Pitman Res. Notes Math. Ser.}, pages 1--35. Longman Sci.
  Tech., Harlow, 1986.

\bibitem{CMMM}
Marcelo Campos, Let{\'\i}cia Mattos, Robert Morris, and Natasha Morrison.
\newblock On the singularity of random symmetric matrices.
\newblock Preprint, arXiv:1904.11478, 04 2019.

\bibitem{CW}
Anthony Carbery and James Wright.
\newblock Distributional and {$L_q$} norm inequalities for polynomials over
  convex bodies in {$\mathbb{R}^n$}.
\newblock {\em Mathematical research letters}, 8(3):233--248, May 2001.

\bibitem{chjr}
Nicholas Cook, Walid Hachem, Jamal Najim, and David Renfrew.
\newblock Non-{H}ermitian random matrices with a variance profile ({I}):
  deterministic equivalents and limiting {ESD}s.
\newblock {\em Electron. J. Probab.}, 23:Paper No. 110, 61, 2018.

\bibitem{Costello}
Kevin~P. Costello.
\newblock Bilinear and quadratic variants on the {L}ittlewood--{O}fford
  problem.
\newblock {\em Israel Journal of Mathematics}, 194(1):359--394, 2013.

\bibitem{CTV}
Kevin~P. Costello, Terence Tao, and Van Vu.
\newblock Random symmetric matrices are almost surely nonsingular.
\newblock {\em Duke mathematical journal}, 135(2):395--413, 2006.

\bibitem{Edelman:condition}
Alan Edelman.
\newblock Eigenvalues and condition numbers of random matrices.
\newblock {\em SIAM J. Matrix Anal. Appl.}, 9(4):543--560, 1988.

\bibitem{FZ}
Joshua Feinberg and A.~Zee.
\newblock Non-{G}aussian non-{H}ermitian random matrix theory: phase transition
  and addition formalism.
\newblock {\em Nuclear Phys. B}, 501(3):643--669, 1997.

\bibitem{FeJa:symmetric}
Asaf Ferber and Vishesh Jain.
\newblock Singularity of random symmetric matrices---a combinatorial approach
  to improved bounds.
\newblock {\em Forum Math. Sigma}, 7:Paper No. e22, 29, 2019.

\bibitem{Girko}
Vyacheslav~L. {Girko}.
\newblock Circular law.
\newblock {\em Theory Probab. Appl.}, 29:694--706, 1984.

\bibitem{GT}
Friedrich G{\"o}tze and Alexander Tikhomirov.
\newblock The circular law for random matrices.
\newblock {\em The Annals of Probability}, 38(4):1444--1491, 2010.

\bibitem{SingleRing}
Alice {Guionnet}, Manjunath {Krishnapur}, and Ofer {Zeitouni}.
\newblock {The single ring theorem.}
\newblock {\em {Ann. Math. (2)}}, 174(2):1189--1217, 2011.

\bibitem{GWZ}
Alice Guionnet, Philip~Matchett Wood, and Ofer Zeitouni.
\newblock Convergence of the spectral measure of non-normal matrices.
\newblock {\em Proc. Amer. Math. Soc.}, 142(2):667--679, 2014.

\bibitem{HT}
Uffe Haagerup and Steen Thorbj{\o}rnsen.
\newblock A new application of random matrices: {${\rm Ext}(C^*_{\rm
  red}(F_2))$} is not a group.
\newblock {\em Ann. of Math. (2)}, 162(2):711--775, 2005.

\bibitem{Halasz}
G.~Hal{\'a}sz.
\newblock Estimates for the concentration function of combinatorial number
  theory and probability.
\newblock {\em Periodica mathematica Hungarica}, 8(3-4):197--211, 1977.

\bibitem{Komlos67}
J.~Koml{{\'o}}s.
\newblock On the determinant of {$(0,\,1)$} matrices.
\newblock {\em Studia Sci. Math. Hungar}, 2:7--21, 1967.

\bibitem{Kti}
Holger K\"{o}sters and Alexander Tikhomirov.
\newblock Limiting spectral distributions of sums of products of
  non-{H}ermitian random matrices.
\newblock {\em Probab. Math. Statist.}, 38(2):359--384, 2018.
\newblock [On table of contents: Vol. 33 (2013)].

\bibitem{KwSa}
Matthew Kwan and Lisa Sauermann.
\newblock An algebraic inverse theorem for the quadratic littlewood-offord
  problem, and an application to ramsey graphs.
\newblock Preprint, arXiv:1909.02089.

\bibitem{LiOf}
J.~E. Littlewood and A.~C. Offord.
\newblock On the number of real roots of a random algebraic equation. {III}.
\newblock {\em Rec. Math. [Mat. Sbornik] N.S.}, 12(54):277--286, 1943.

\bibitem{pastur-marchenko}
V.~A. Mar{\v{c}}enko and L.~A. Pastur.
\newblock Distribution of eigenvalues in certain sets of random matrices.
\newblock {\em Math. USSR Sb.}, 1:457--483, 1967.
\newblock English translation of Mat. Sbornik 72 507--536.

\bibitem{MiSp}
James~A. Mingo and Roland Speicher.
\newblock {\em Free probability and random matrices}, volume 35.;35;.
\newblock The Fields Institute for Research in the Mathematical Sciences, New
  York, NY, 2017.

\bibitem{NgVu:ILO}
Hoi Nguyen and Van Vu.
\newblock Optimal inverse {L}ittlewood-{O}fford theorems.
\newblock {\em Adv. Math.}, 226(6):5298--5319, 2011.

\bibitem{Nguyen:quadratic}
Hoi~H. Nguyen.
\newblock A continuous variant of the inverse {L}ittlewood-{O}fford problem for
  quadratic forms.
\newblock {\em Contrib. Discrete Math.}, 7(2):1--17, 2012.

\bibitem{Nguyen:symmetric}
Hoi~H. Nguyen.
\newblock Inverse littlewood--offord problems and the singularity of random
  symmetric matrices.
\newblock {\em Duke mathematical journal}, 161(4):545--586, 2012.

\bibitem{ORSV}
Sean O'Rourke, David Renfrew, Alexander Soshnikov, and Van Vu.
\newblock Products of independent elliptic random matrices.
\newblock {\em J. Stat. Phys.}, 160(1):89--119, 2015.

\bibitem{OS}
Sean O'Rourke and Alexander Soshnikov.
\newblock Products of independent non-{H}ermitian random matrices.
\newblock {\em Electron. J. Probab.}, 16:no. 81, 2219--2245, 2011.

\bibitem{RuVe:LO}
Mark Rudelson and Roman Vershynin.
\newblock The {L}ittlewood-{O}fford problem and invertibility of random
  matrices.
\newblock {\em Adv. Math.}, 218(2):600--633, 2008.

\bibitem{ShlSkou}
Dimitri Shlyakhtenko and Paul Skoufranis.
\newblock Freely independent random variables with non-atomic distributions.
\newblock {\em Transactions of the American Mathematical Society},
  367:6267--6291, May 2013.

\bibitem{Sniady}
Piotr \'{S}niady.
\newblock Random regularization of {B}rown spectral measure.
\newblock {\em J. Funct. Anal.}, 193(2):291--313, 2002.

\bibitem{SST}
Marta Strzelecka, Micha{\l} Strzelecki, and Tomasz Tkocz.
\newblock On the convex infimum convolution inequality with optimal cost
  function.
\newblock {\em Alea}, 14:903--915, Feb 2017.

\bibitem{TVUniversality}
Terence Tao, Van Vu, and Manjunath Krishnapur.
\newblock Random matrices: {U}niversality of {ESD}s and the circular law.
\newblock {\em Ann. Probab.}, 38(5):2023--2065, Sep 2010.

\bibitem{TaVu:sing}
Terence Tao and Van~H. Vu.
\newblock On the singularity probability of random {B}ernoulli matrices.
\newblock {\em J. Amer. Math. Soc.}, 20(3):603--628, 2007.

\bibitem{TaVu:circ}
Terence Tao and Van~H. Vu.
\newblock Random matrices: the circular law.
\newblock {\em Commun. Contemp. Math.}, 10(2):261--307, 2008.

\bibitem{TaVu:cond}
Terence Tao and Van~H. Vu.
\newblock Inverse {L}ittlewood-{O}fford theorems and the condition number of
  random discrete matrices.
\newblock {\em Ann. of Math. (2)}, 169(2):595--632, 2009.

\bibitem{Tik}
A.~N. Tikhomirov.
\newblock On the asymptotics of the spectrum of the product of two rectangular
  random matrices.
\newblock {\em Sibirsk. Mat. Zh.}, 52(4):936--954, 2011.

\bibitem{TrEm}
Lloyd~N. Trefethen and 1974 Embree, Mark.
\newblock {\em Spectra and pseudospectra: the behavior of nonnormal matrices
  and operators}.
\newblock Princeton University Press, Princeton, N.J, 2005.

\bibitem{Vershynin:symmetric}
Roman Vershynin.
\newblock Invertibility of symmetric random matrices.
\newblock {\em Random Structures Algorithms}, 44(2):135--182, 2014.

\bibitem{VoZe}
Martin Vogel and Ofer Zeitouni.
\newblock Deterministic equivalence for noisy perturbations.
\newblock Preprint, arXiv:2001.09024.

\bibitem{V91}
Dan Voiculescu.
\newblock Limit laws for random matrices and free products.
\newblock {\em Inventiones mathematicae}, 104(1):201--220, Dec 1991.

\bibitem{Wig58}
E.~P. Wigner.
\newblock On the distribution of the roots of certain symmetric matrices.
\newblock {\em Annals Math.}, 67:325--327, 1958.

\bibitem{Wood:noise}
Philip~Matchett Wood.
\newblock Universality of the {ESD} for a fixed matrix plus small random noise:
  a stability approach.
\newblock {\em Ann. Inst. Henri Poincar\'{e} Probab. Stat.}, 52(4):1877--1896,
  2016.

\end{thebibliography}

\end{document}